\documentclass[12pt]{article}
\usepackage{amsmath,graphicx,amssymb,amsthm}
\usepackage{rotating}

\allowdisplaybreaks

\headheight=10pt     \topmargin=0pt \textheight=631pt
\textwidth=445pt \oddsidemargin=16pt \evensidemargin=16pt

\addtolength{\textheight}{10mm} \addtolength{\textwidth}{25mm}
\addtolength{\oddsidemargin}{-15mm}
\addtolength{\evensidemargin}{-15mm}

\addtolength{\topmargin}{-10mm}

\def\amslatex{$\mathcal{A}\kern-.1667em\lower.5ex\hbox{$\mathcal{M}$}\kern-.125em\mathcal{S}$-\LaTeX}

\newtheorem{set}{set}[section]
\newtheorem{Corollary}[set]{Corollary}

\newtheorem{Lemma}[set]{Lemma}

\newtheorem{Proposition}[set]{Proposition}
\newtheorem{Remark}[set]{Remark}
\newtheorem{Theorem}[set]{Theorem}
\newcommand{\define}{\mathrel{\hbox{$\equiv$\hskip -.90em \lower .47ex \hbox{$\leftharpoondown$}}}}
\newcommand{\enifed}{\mathrel{\hbox{$\equiv$\hskip -.90em \lower .47ex \hbox{$\rightharpoondown$}}}}

\numberwithin{equation}{section}
\pagestyle{myheadings}
\begin{document}
\title{On a class of operators in the hyperfinite ${\rm II}_1$ factor}
\author{Zhangsheng Zhu \and Junsheng Fang  \and Rui Shi}
\date{}
\maketitle

 \begin{abstract}
Let $R$ be the hyperfinite  ${\rm II}_1$ factor and let $u,v$
be two generators of $R$ such that $u^*u=v^*v=1$ and $vu=e^{2\pi
i\theta} uv$ for an irrational number $\theta$. In this paper we study the class
of operators $uf(v)$, where $f$ is a bounded Lebesgue measurable function on the unit
circle $S^1$. We
calculate the spectrum and Brown spectrum of operators $uf(v)$, and study the invariant
subspace problem of such operators relative to $R$. We show that under general assumptions the von Neumann algebra generated
by $uf(v)$ is an irreducible subfactor of $R$ with index $n$ for some natural number $n$, and the $C^*$-algebra
generated by $uf(v)$ and the identity operator is a generalized universal irrational rotation $C^*$-algebra.
 \end{abstract}

\section{Introduction}

Let $M$ be a von Neumann algebra acting on a Hilbert space $\mathcal{H}$. A closed subspace $\mathcal{K}$ of $\mathcal{H}$
is said to be affiliated with $M$ if the projection of $\mathcal{H}$ onto $\mathcal{K}$ belongs to $M$. For $T\in M$, a subspace
$K$ is said to be $T$-invariant if $T\mathcal{K}\subseteq \mathcal{K}$ or equivalently $P_{\mathcal{K}}TP_{\mathcal{K}}=TP_{\mathcal{K}}$.
The invariant subspace problem relative to a von Neumann algebra $M$ asks whether every operator $T\in M$ has a non-trivial, closed, invariant
subspace $\mathcal{K}$ affiliated with $M$, and the hyperinvariant subspace problem asks whether one can always choose such a $\mathcal{K}$ to be
hyperinvariant for $T$, i.e., it is $S$-invariant for every $S\in \mathcal{B}(\mathcal{H})$ that commutes with $T$. If the subspace $\mathcal{K}$ is $T$-hyperinvariant, then $P_{\mathcal{K}}\in W^*(T)=\{T,T^*\}''$.

Let $M$ be a finite von Neumann algebra with a faithful normal
tracial state $\tau$. The \emph{Fuglede-Kadison
determinant}~\cite{FK}, $\Delta:\, M\rightarrow [0,+\infty[,$ is
given by
\[
\Delta(T)={\rm exp}\{\tau(\ln|T|)\},\quad T\in M,
\]
with ${\rm exp}\{-\infty\}:=0$. For an arbitrary element $T$ in
$M$ the function $\lambda\rightarrow \ln\Delta(T-\lambda 1)$ is
subharmonic on $\mathbb{C}$, and its Laplacian
\[
d\mu_T(\lambda):=\frac{1}{2\pi}\triangledown^2\ln \Delta(T-\lambda
1),
\]
in the distribution sense, defines a probability measure $\mu_T$
on $\mathbb{C}$, called the \emph{Brown's spectral distribution}
or \emph{Brown measure} of $T$~\cite{Br}. From the definition, Brown
measure $\mu_T$ only depends on the joint distribution of $T$ and
$T^*$, i.e., the (noncommutative) mixed moments of $T$ and $T^*$.
If $T$ is normal, then $\mu_T$ is the  trace $\tau$ composed with
the spectral projections of $T$. If $M=M_n(\mathbb{C})$, then
$\mu_T$ is the normalized counting measure
$\frac{1}{n}\left(\delta_{\lambda_1}+\delta_{\lambda_2}+\cdots+\delta_{\lambda_n}\right)$,
where $\lambda_1,\lambda_2,\cdots,\lambda_n$ are the eigenvalues
of $T$ repeated according to root multiplicity.
Recently, Uffe Haagerup and Hanne Schultz made a huge advance on the invariant subspace problem relative to a type ${\rm II}_1$ factor(\cite{HS2}). They proved that if the Brown measure of an operator $T$ in a type ${\rm II}_1$ factor is not concentrated in one point, the operator $T$ has a non-trivial, closed, invariant subspace $\mathcal{K}$ affiliated with $M$ and moreover, this subspace is hyperinvariant. However,
the calculation of Brown measures of nonnormal operators is complicated in general (see\cite{HL,BL,FHM}). Note that the support of the Brown measure of an operator is contained in the spectrum of the operator.

As regards the invariant subspace problem relative to the von Neumann algebra, the following question remains open: If $T$ is an operator in a type ${\rm II}_1$ factor $M$ and if the Brown measure $\mu_T$ is a Dirac measure, for example if $T$ is quasinipotent, does $T$ has a non-trivial, closed,
invariant subspace affiliated with $M$? In~\cite{DH}, Dykema and Haagerup introduced the family of ${\rm DT}$-operators and they studied many of their properties. In~\cite{DH2} they showed that every quasinilpotent ${\rm DT}$-operator $T$ has a one-parameter family of non-trivial hyperinvariant subspaces. In particular, they proved that for $t\in [0,1]$
\[
\mathcal{H}_t=\left\{\xi\in \mathcal{H}:\, \limsup_n\left(\frac{k}{e}\left\|T^k\xi\right\| \right)^{2/k}\leq t \right\}
\]
is a closed, hyperinvariant subspace of $T$. In~\cite{Ga}, Gabriel introduced a class of quasinilpotent operators in the hyperfinite ${\rm II}_1$ factor $R$.  He showed that the quasinilpotent operator generates $R$ and it has non-trivial, closed invariant subspaces affiliated to $R$. However the existence of nontrivial hyperinvariant subspaces of such class of operators remains open.

Let $R$ be the hyperfinite ${\rm II}_1$ factor and let $\theta\in (0,1)$ be an irrational
number. Then there are two unitary operators $u,v$ in $R$ such that $R=\{u,v\}''$ and $vu=e^{2\pi
i\theta} uv$. In this paper we study the class
of operators $uf(v)$ in $R$, where $f$ is a bounded Lesbegue measurable function on the unit
circle $S^1$.  A natural example of the class of operators is $u+\lambda v$, where $\lambda\in \mathbb{C}$. Indeed, if let $w=u^*v$, then
$R=\{u,w\}''$ and $wu=e^{2\pi i\theta} uw$. Note that $u+\lambda v=u(1+\lambda w)$. The
operator $u+\lambda v$ is closely related to the so called almost
Mathiew operators which can be viewed as the operator $(u+\lambda
e^{2\pi i\beta} v)+(u+\lambda e^{2\pi i\beta} v)^*$ in $R$ (see
\cite{La} for a recent historical account and for the physics
background of almost Mathiew operators).

The above class of operators are analogues of $R$-diagonal operators. Recall that if $u$ and $v$
are free Haar unitary operators in a finite von Neumann algebra $M$ and $f$ is a bounded measurable function on the unit
circle $S^1$ then $uf(v)$ is an $R$-diagonal operator(\cite{NS}). In~\cite{HL}, Haagerup and Larson calculated
the spectrum and Brown spectrum of $R$-diagonal operators.
 In~\cite{SS}, Sniady and Speicher proved that
every $R$-diagonal operator has a continuous family of invariant subspaces affilated with $M$.

In  section 2 and section 3 of this paper, we calculate the spectrum of $uf(v)$ in $R$, where $f$ is a continuous function on $S^1$.  The main result is that the spectrum of $uf(v)$ is given by
\[
\sigma(uf(v))=
\begin{cases}
\Delta(f(v))S^1& f(v)\,\text{is invertible},\\
\overline{{\mathbb B}(0,\Delta(f(v)))}&f(v)\,\text{is not
invertible},
\end{cases}
\]
where
$\Delta(f(v))=\exp(\int_0^1 \ln|f(e^{2\pi ix})|dx)$ is the
Fuglede-Kadison determinant of $f(v)$.  In section 2 we show that
the spectral radius of $uf(v)$ is $\Delta(f(v))$. A key idea in
the calculation  is using Birkhoff's Ergodic theorem and the unique ergodicity of the irrational rotation. Then in
section 3 we prove the main result. The main difficulty is to show
that $\sigma(uf(v))$ is connected. This is done by using the
averaging technique. We also point out the main result does not hold for some $f\in L^\infty(S^1,m)$.

In section 4, we study the von Neumann algebra generated by $uf(v)$. We show that if the zero set of $f(z)\in L^\infty(S^1,m)$ has Lebesgue measure zero, then $W^*(uf(v))$ is an irreducible subfactor of $R$ with index $n$ for some positive integer $n$.

In section 5, we consider the invariant subspace problem of $uf(v)$ relative to $R$. Firstly we calculate the Brown measure of $uf(v)$. We will
show that the Brown measure of $uf(v)$ (in $R$) is  the Haar measure on $\Delta(f(v))S^1$ for all $f\in L^\infty(S^1,m)$. As a corollary of Haagerup and Schultz's result, if $\Delta(f(v))>0$, for example $f$ is a polynomial, then $uf(v)$ has a continuous family of invariant subspaces affiliated with $M$. On the other hand, if $\Delta(f(v))=0$ we show that the known methods are unable to determine whether or not the operator $uf(v)$ has a nontrivial, closed, invariant subspace affiliated with $R$. Thus such class of operators are interesting candidates for the question of the invariant subspace problem relative to $R$.

 Recall that a generalized universal irrational rotation $C^*$-algebra  $A_{\theta,\gamma}$ is the universal $C^*$-algebra generated by $x$ and $w$ satisfying the following properties~(\cite{FJX}):
\begin{equation}\label{U1}
w^*w=ww^*=1,
\end{equation}
\begin{equation}\label{U2}
x^*x=\gamma(w),
\end{equation}
\begin{equation}\label{U3}
xx^*=\gamma(e^{-2\pi i\theta}w),
\end{equation}
\begin{equation}\label{U4}
xw=e^{-2\pi i\theta}wx,
\end{equation}
where $\gamma(z)\in C(S^1)$ is a positive function. If $\gamma(z)\equiv 1$ (or $\gamma(z)$ does not have zero points), then $A_{\theta,\gamma}$ is the universal irrational rotation $C^*$-algebra. In~\cite{FJX}, many properties of  generalized universal irrational rotation $C^*$-algebras are studied, including tracial state spaces, simplicity, $K$-groups, and classification of simple generalized universal irrational rotation $C^*$-algebras. For instance, the following results are Theorem 5.7 and Theorem 6.6 of~\cite{FJX} respectively.

\begin{Theorem}\label{T:K-groups}
Let $Y$ be the set of zeros of $\gamma.$ If $\emptyset\neq Y\neq S^1$, then
\[
K_1({A}_{\theta, \gamma})\cong{\mathbb Z}
\]
and there exists a splitting short exact sequence:
\[
0\to {\mathbb Z}\to K_0({A}_{\theta, \gamma})\to C(Y, {\mathbb Z})\to 0.
\]
In particular, if $Y$ has $n$ points, then
\[
K_0({A}_{\theta, \gamma})\cong {\mathbb Z}^{n+1}.
\]
\end{Theorem}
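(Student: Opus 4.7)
The plan is to realize $A_{\theta,\gamma}$ as a Cuntz--Pimsner algebra $\mathcal{O}_E$ over $C(S^1)$ and then apply Pimsner's six-term exact sequence in $K$-theory. Let $R_\theta:S^1\to S^1$ denote the rotation $z\mapsto e^{-2\pi i\theta}z$, and let $E$ be the $C^*$-correspondence over $C(S^1)$ equal to $C(S^1)$ as a vector space, with the standard right action, right inner product $\langle f,g\rangle=\bar f\gamma g$, and left action $\varphi(h)f=(h\circ R_\theta)f$. The defining relations of $A_{\theta,\gamma}$ match those of $\mathcal{O}_E$: if $s$ denotes the canonical generator associated with $1\in E$, then $s^*s=\gamma(w)$, $ss^*=\gamma(e^{-2\pi i\theta}w)$, and $sw=e^{-2\pi i\theta}ws$, so universality identifies $\mathcal{O}_E\cong A_{\theta,\gamma}$ with $s$ corresponding to $x$.

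Next, I would apply Pimsner's exact sequence
\[
K_0(C(S^1))\xrightarrow{\,1-[E]\,}K_0(C(S^1))\to K_0(\mathcal{O}_E)\to K_1(C(S^1))\xrightarrow{\,1-[E]\,}K_1(C(S^1))\to K_1(\mathcal{O}_E)\to K_0(C(S^1)).
\]
Since $K_0(C(S^1))\cong\mathbb{Z}[1]$ and $K_1(C(S^1))\cong\mathbb{Z}[w]$, the computation reduces to the two $\mathbb{Z}\to\mathbb{Z}$ maps induced by the class $[E]\in KK(C(S^1),C(S^1))$. When $Y=\emptyset$, $E$ is a full Hilbert bimodule implementing the Morita self-equivalence associated with $R_\theta$; both induced maps are the identity, $1-[E]=0$, and one recovers $K_0(A_\theta)\cong K_1(A_\theta)\cong\mathbb{Z}^2$. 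For $\emptyset\ne Y\ne S^1$, the $K_1$ calculation is a winding-number computation insensitive to multiplication by positive functions, so the induced map on $K_1$ remains zero; the map on $K_0$ is also zero, but the non-fullness of $E$ enriches $K_0(\mathcal{O}_E)$ by a cokernel contribution reflecting the zero set. The canonical unital inclusion $C(S^1)\cong C^*(w)\hookrightarrow A_{\theta,\gamma}$ provides a splitting sending $[1]\mapsto[1]$.

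The main technical task is to show that $n$ zeros contribute exactly $C(Y,\mathbb{Z})\cong\mathbb{Z}^n$ to $K_0(A_{\theta,\gamma})$ and nothing to $K_1$. One approach is to homotope $\gamma$ through positive functions with the same zero set to one supported in small disjoint arcs around each $y\in Y$, compute the $K$-theory of a local Toeplitz-type extension at a single zero, and then assemble $n$ such local contributions with the background rotation-algebra piece. Alternatively, one can work with an explicit short exact sequence $0\to J\to A_{\theta,\gamma}\to A_{\theta,\gamma}/J\to 0$, where $J$ is the closed two-sided ideal generated by elements vanishing on the $R_\theta$-orbit of $Y$, identify the quotient with a finite direct sum indexed by $Y$, and run the associated six-term $K$-theory sequence. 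Either way the subtlety lies in controlling the KK-class of $E$ in the presence of zeros, which is where the cardinality of $Y$ enters the final answer.
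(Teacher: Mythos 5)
First, a point of reference: this paper does not prove Theorem~\ref{T:K-groups} at all --- it is quoted verbatim as Theorem 5.7 of~\cite{FJX}, so there is no internal proof to compare against, and your proposal must be judged on its own terms. On those terms it contains a genuine gap. Pimsner's six-term sequence in the form you quote, with $K_*(C(S^1))$ in the first slot, requires the left action $\varphi$ of $C(S^1)$ on $E$ to be injective and by \emph{compact} module operators. That hypothesis fails precisely because $\gamma$ has zeros: as a right Hilbert module $E\cong \overline{\gamma^{1/2}C(S^1)}\cong C_0(S^1\setminus Y)$, so $\mathcal{K}(E)\cong C_0(S^1\setminus Y)$ and $\varphi(h)$ is compact only when $h$ vanishes on a rotate of $Y$; in particular $\varphi(1)$ is not compact once $Y\neq\emptyset$. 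Worse, your sequence provably gives the wrong answer: if, as you assert, both maps $1-[E]\colon \mathbb{Z}\to\mathbb{Z}$ are zero, exactness would force $K_0(A_{\theta,\gamma})\cong\mathbb{Z}^2$ for \emph{every} nonempty $Y\neq S^1$, contradicting $K_0\cong\mathbb{Z}^{n+1}$ for $n\geq 2$ zeros --- there is simply no slot in that sequence for the ``cokernel contribution reflecting the zero set'' you invoke. The correct tool is Katsura's generalization, $K_i(J_E)\xrightarrow{\iota_*-[E]}K_i(C(S^1))\to K_i(\mathcal{O}_E)\to K_{1-i}(J_E)$, where $J_E=\varphi^{-1}(\mathcal{K}(E))\cap(\ker\varphi)^\perp\cong\{h\in C(S^1): h|_{e^{-2\pi i\theta}Y}=0\}\cong C_0(S^1\setminus Y)$. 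Then $K_0(J_E)=0$, while the six-term sequence of $0\to C_0(S^1\setminus Y)\to C(S^1)\to C(Y)\to 0$ gives $K_1(J_E)\cong C(Y,\mathbb{Z})$ (here you need $K_1(C(Y))=0$, which holds because $Y$ is a proper closed subset of the circle, and freeness of $C(Y,\mathbb{Z})$ by N\"obeling--Specker, which yields the splitting; note your ``splitting sending $[1]\mapsto[1]$'' only describes the injection $\mathbb{Z}\to K_0$, not a retraction or section). After verifying $\iota_*=[E]_*$ on $K_1(J_E)$ (the rotation is homotopic to the identity), exactness delivers precisely $K_1(\mathcal{O}_E)\cong\mathbb{Z}$ and $0\to\mathbb{Z}\to K_0(\mathcal{O}_E)\to C(Y,\mathbb{Z})\to 0$. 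So your identification $A_{\theta,\gamma}\cong\mathcal{O}_E$ is a sound idea and the strategy is salvageable --- modulo a sign (your conventions give $sw=e^{2\pi i\theta}ws$, the inverse of relation~(\ref{U4})) and modulo checking that the single relation $xx^*=\gamma(e^{-2\pi i\theta}w)$ enforces Katsura covariance on all of $J_E$ (true, since covariant elements form a closed ideal and $\gamma(e^{-2\pi i\theta}\,\cdot\,)$ generates $J_E$) --- but the $K$-theoretic engine as you wrote it is the wrong one and yields a false conclusion.

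Your fallback suggestion is also broken as stated: the ideal of $A_{\theta,\gamma}$ ``generated by elements vanishing on the $R_\theta$-orbit of $Y$'' cannot produce a quotient that is ``a finite direct sum indexed by $Y$,'' because $\theta$ is irrational, so the orbit of any point of $Y$ is dense in $S^1$ and a continuous function vanishing on it vanishes identically; the ideal you describe inside $C(S^1)$ is $\{0\}$. Any extension-based argument must instead use the genuinely noncommutative ideal structure of $A_{\theta,\gamma}$ (equivalently, the Katsura ideal $J_E$ above), not functions vanishing on orbits.
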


\begin{Theorem}\label{T1c1}
Let $\theta_1$ and $\theta_2$ be two irrational numbers,
$\gamma_1$ and $\gamma_2\in C(S^1)$ be non-negative functions and let $Y_i$ be the set of zeros of $\gamma_i,$ $i=1,2.$
Suppose that $A_{\theta_i, \gamma_i}$ is simple.
Then  $A_{\theta_1, \gamma_1}\cong A_{\theta_2, \gamma_2}$  if and only if
the following hold:
\[
\theta_1=\pm \theta_2 mod(\mathbb Z)\quad{\rm and}\quad C(Y_1, {\mathbb Z})/{\mathbb Z}\cong C(Y_2, {\mathbb Z})/{\mathbb Z}.
\]
In particular, when  $\gamma_1$ has only finitely many zeros, then
$A_{\theta_1,\gamma_1}\cong A_{\theta_2, \gamma_2}$  if and only if $\theta_1=\pm \theta_2 mod{\mathbb{Z}}$ and
$\gamma_2$ has the same number of zeros.
\end{Theorem}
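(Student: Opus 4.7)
The plan is to prove both directions of the equivalence using $K$-theoretic and tracial invariants for necessity and an explicit construction for sufficiency.

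\textbf{Necessity.} Assume $\Phi: A_{\theta_1, \gamma_1} \to A_{\theta_2, \gamma_2}$ is a $*$-isomorphism. For the first condition, I would use the tracial state $\tau_i$ on $A_{\theta_i, \gamma_i}$ (unique by simplicity) and analyze $\tau_{i*}(K_0(A_{\theta_i, \gamma_i})) \subset \mathbb{R}$. Using the splitting of Theorem~\ref{T:K-groups} and the existence of a Rieffel-type projection of trace $\theta_i$ (inherited from the underlying rotation subalgebra), this image should equal $\mathbb{Z} + \theta_i \mathbb{Z}$. Since $\mathbb{Z} + \theta_1 \mathbb{Z} = \mathbb{Z} + \theta_2 \mathbb{Z}$ in $\mathbb{R}$ iff $\theta_1 \equiv \pm \theta_2 \bmod \mathbb{Z}$, this forces the sign-ambiguous equality. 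For the second condition, since $\Phi$ is unital it preserves $[1]$ and induces an isomorphism $K_0(A_{\theta_1, \gamma_1})/\mathbb{Z}[1] \cong K_0(A_{\theta_2, \gamma_2})/\mathbb{Z}[1]$; through the splitting of Theorem~\ref{T:K-groups}, and using that $[1]$ projects to the constant function $1 \in C(Y_i, \mathbb{Z})$, this quotient is exactly $C(Y_i, \mathbb{Z})/\mathbb{Z}$, delivering $C(Y_1, \mathbb{Z})/\mathbb{Z} \cong C(Y_2, \mathbb{Z})/\mathbb{Z}$.

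\textbf{Sufficiency.} I would construct the isomorphism via the universal property~(\ref{U1})--(\ref{U4}). First, the substitution $w \mapsto w^*$ (keeping $x$ fixed) realizes an isomorphism $A_{\theta, \gamma} \cong A_{-\theta, \tilde\gamma}$, where $\tilde\gamma(z) = \gamma(\bar z)$: one checks that the defining relations transform correctly, and the zero set is merely complex-conjugated, preserving the invariant $C(\cdot, \mathbb{Z})/\mathbb{Z}$. This reduces the problem to the case $\theta_1 = \theta_2 \bmod \mathbb{Z}$. In that case, I would lift the abstract group isomorphism $C(Y_1, \mathbb{Z})/\mathbb{Z} \cong C(Y_2, \mathbb{Z})/\mathbb{Z}$ via Stone-duality-type arguments to a correspondence of the clopen structures of $Y_1$ and $Y_2$, and then extend this to a continuous positive interpolation between $\gamma_1$ and $\gamma_2$ on $S^1$. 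The universal property then yields the required unital $*$-isomorphism.

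\textbf{Finite-zeros corollary.} If $|Y_1| = n_1 < \infty$, then $C(Y_1, \mathbb{Z})/\mathbb{Z} \cong \mathbb{Z}^{n_1 - 1}$; any isomorphism forces $C(Y_2, \mathbb{Z})/\mathbb{Z}$ to be free of finite rank, which (as $C(Y_2, \mathbb{Z})$ is generated by clopen subsets of $Y_2$) forces $|Y_2| = n_2 < \infty$, and rank invariance of free abelian groups then yields $n_1 = n_2$. The converse direction follows at once from the main equivalence.

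\textbf{Main obstacle.} The hardest step is the sufficiency direction: the abstract group isomorphism $C(Y_1, \mathbb{Z})/\mathbb{Z} \cong C(Y_2, \mathbb{Z})/\mathbb{Z}$ must be promoted to a concrete topological correspondence between the zero sets, and this correspondence must be made compatible with the $\theta$-rotation implicit in the relation $xx^* = \gamma(e^{-2\pi i\theta} w)$. A subsidiary technical point is precisely pinning down $\tau_{i*}(K_0)$ as $\mathbb{Z} + \theta_i \mathbb{Z}$, ruling out additional trace values arising from the $C(Y_i, \mathbb{Z})$ part of $K_0$.
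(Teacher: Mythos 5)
First, note that this paper does not prove Theorem~\ref{T1c1} at all: it is quoted verbatim as Theorem 6.6 of~\cite{FJX}, so your proposal has to be measured against the classification-style proof in that reference, which establishes that the simple $A_{\theta,\gamma}$ belong to a classifiable class (they are orbit-breaking-type $C^*$-subalgebras of the irrational rotation algebra, shown to be inductive limits of circle algebras with real rank zero) and then reads the theorem off from the Elliott invariant computed in Theorem~\ref{T:K-groups} together with the trace data. Your necessity direction is broadly in that spirit and is essentially repairable: the trace image $\mathbb{Z}+\theta_i\mathbb{Z}$ is indeed the right invariant for the angle, though you should justify it via the unital embedding $A_{\theta,\gamma}\subset A_\theta$ (so that $\tau_*(K_0)\subseteq\mathbb{Z}+\theta\mathbb{Z}$, with $\theta$ attained by a Rieffel projection inside the subalgebra) rather than asserting uniqueness of the trace ``by simplicity'' --- simplicity alone does not give a unique trace; that is a separate result about these algebras proved in~\cite{FJX}.

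The fatal gap is your sufficiency direction. The group $C(Y,\mathbb{Z})/\mathbb{Z}$ does \emph{not} remember the topology of $Y$, so no ``Stone-duality-type'' argument can lift the abstract isomorphism $C(Y_1,\mathbb{Z})/\mathbb{Z}\cong C(Y_2,\mathbb{Z})/\mathbb{Z}$ to a correspondence of clopen structures: take $Y_1$ a Cantor set of measure zero and $Y_2$ the disjoint union of such a Cantor set with one isolated point (both realizable as zero sets satisfying the simplicity condition $\phi^n(Y)\cap Y=\emptyset$); then $C(Y_i,\mathbb{Z})/\mathbb{Z}$ are both free abelian of countably infinite rank, hence isomorphic, while $Y_1\not\cong Y_2$. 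The theorem asserts $A_{\theta,\gamma_1}\cong A_{\theta,\gamma_2}$ in this situation, so the isomorphism cannot be implemented by any homeomorphism of zero sets plus an interpolation of the $\gamma_i$; the coarseness of the invariant is exactly the content of the theorem. Even in the finite-zero case your plan fails for a second reason: a generator substitution via the universal property~(\ref{U1})--(\ref{U4}) only lets you replace $\gamma$ by $\gamma\,|g|^2$ for nonvanishing $g$ (or shift zeros along a single $\phi$-orbit), and a homeomorphism of $S^1$ intertwining the rotation by $\theta$ with itself is essentially a rotation, so arbitrary configurations of $n$ zeros cannot be matched by any explicit construction of this kind. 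The correct route is unavoidable classification machinery: prove the simple $A_{\theta,\gamma}$ lie in a class classified by $\bigl(K_0,K_0^+,[1],K_1,\tau\bigr)$ and verify that the stated conditions are exactly equivalence of these invariants, which is what~\cite{FJX} does. Your $w\mapsto w^*$ reduction to $\theta_1=\theta_2\bmod\mathbb{Z}$ is correct and standard, and your finite-zeros corollary is fine granted the main equivalence, but the core of the theorem is missing.
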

In section 6, we show that the $C^*$-algebra generated by $uf(v)$ and the identity operator is closely related to the  generalized universal irrational rotation algebra. Precisely, we will prove the following result. Let $Y$ be the zero points of $f(z)$. If $Y$ satisfy $\phi^n(Y)\cap Y=\emptyset$ for any integer $n\not=0$, where $\phi(z)=e^{2\pi i\theta}z$, then $C^*(uf(v),1)$ is a generalized universal irrational rotation $C^*$-algebra. Furthermore, if $|f|(z)$ is not a periodic function, then $C^*(uf(v),1)\cong A_{\theta,|f|^2}$. As a corollary, we will show that if $A_{\theta,\gamma}$ is a simple $C^*$-algebra, then $A_{\theta,\gamma}$ is generated by an element $uf(v)$ and the identity operator for some $f(z)\in C(S^1)$.

\noindent{\bf Acknowledgement:}\, The authors thank Chunlan Jiang and Liu zhengwei for valuable discussions on the paper.

\section{Spectral radius of $uf(v)$}
Let $\alpha=e^{2\pi i\theta}$. Since $vu=\alpha uv$,
 $f(v)u=uf(\alpha v)$ for all $f\in
L^\infty(S^1,m)$. So
\[
(uf(v))^2=uf(v)uf(v)=u^2f(\alpha v)f(v),
\]
\[
(uf(v))^3=uf(v)uf(v)uf(v)=u^3f(\alpha^2v)f(\alpha v)f(v).
\]
By induction, we have
\[
(uf(v))^n=u^nf(\alpha^{n-1}v)f(\alpha^{n-2}v)\cdots f(v).
\]
Let $r(uf(v))$ be the spectral radius of $uf(v)$. Then
\[
r(uf(v))=\lim_{n\rightarrow+\infty} \|(uf(v))^n\|^{1/n}=\lim_{n\rightarrow+\infty} \|f(\alpha^{n-1}v)f(\alpha^{n-2}v)\cdots f(v)\|^{1/n}.
\]
Since $v$ is a Haar unitary operator, we may identify $v$ with the multiplication operator $M_z$ on $L^2(S^1,m)$, where $m$ is the Haar measure on $S^1$. Hence,
\begin{eqnarray}
\nonumber \|(uf(v))^n\|^{1/n}&=&\|f(\alpha^{n-1}v)f(\alpha^{n-2}v)\cdots f(v)\|^{1/n}\\
\nonumber &=&\|f(\alpha^{n-1}z)f(\alpha^{n-2}z)\cdots f(z)\|_\infty^{1/n}\\
\nonumber&=&\left({\rm esssup}_{z\in
S^1}\left|f(\alpha^{n-1}z)f(\alpha^{n-2}z)\cdots
f(z)\right|\right)^{1/n}.
\end{eqnarray}

\begin{Lemma}\label{L:leq}
If $f(z)\in L^\infty(S^1,m)$ and $\int_0^1\left|\left(\ln
|f(e^{2\pi ix})|\right)\right|dx<\infty$, then $r(uf(v))\geq
\Delta(f(v))$.
\end{Lemma}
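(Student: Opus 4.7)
The plan is to start from the expression
\[
r(uf(v)) = \lim_{n\to\infty} \Bigl( {\rm esssup}_{z \in S^1} \bigl| f(\alpha^{n-1}z)\cdots f(z) \bigr| \Bigr)^{1/n}
\]
established in the excerpt, pass to logarithms, and bound the resulting quantity below using Birkhoff's pointwise ergodic theorem. Since $\theta$ is irrational, the rotation $T_\alpha : z \mapsto \alpha z$ on $(S^1, m)$ is ergodic with respect to Haar measure $m$, and the hypothesis $\int_0^1 |\ln|f(e^{2\pi ix})||\,dx < \infty$ is precisely what is needed to apply Birkhoff's theorem to $g(z) := \ln|f(z)| \in L^1(S^1, m)$ and obtain a finite limit.

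After taking logarithms (using that $\ln$ commutes with ${\rm esssup}$ for nonnegative integrands, with the convention $\ln 0 = -\infty$), the claim reduces to
\[
\liminf_{n\to\infty} \frac{1}{n} \, {\rm esssup}_{z \in S^1} \sum_{k=0}^{n-1} g(\alpha^k z) \;\geq\; \int_0^1 g(e^{2\pi ix}) \, dx.
\]
By Birkhoff's theorem applied to $g$ under $T_\alpha$, there is a set $E \subseteq S^1$ of full measure on which $\frac{1}{n}\sum_{k=0}^{n-1} g(\alpha^k z) \to \int g\,dm$. For each fixed $n$, the pointwise inequality $\sum_{k=0}^{n-1} g(\alpha^k z) \leq {\rm esssup}_w \sum_{k=0}^{n-1} g(\alpha^k w)$ holds on some full-measure set $E_n$; intersecting all of the $E_n$'s with $E$ still yields a full-measure set, on which, for every $n$ simultaneously, the essential supremum dominates the Birkhoff average. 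Letting $n \to \infty$ along any such $z$ gives the desired inequality.

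I do not expect a serious obstacle. The argument is a clean application of Birkhoff's theorem to the ergodic irrational rotation on the circle; ergodicity (not yet unique ergodicity, which presumably enters when the reverse inequality is proved in the next section) is what is needed here. The only care required is the bookkeeping of exceptional null sets via a countable intersection. The integrability assumption on $\ln|f|$ is essential, both to make $\int \ln|f|\,dm$ a finite number (otherwise $\Delta(f(v)) = 0$ and the conclusion is vacuous) and to legitimize the application of Birkhoff's theorem in the first place.
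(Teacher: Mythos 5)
Your proposal is correct and takes essentially the same route as the paper: both apply Birkhoff's pointwise ergodic theorem for the ergodic irrational rotation to $g=\ln|f|\in L^1(S^1,m)$ and feed the resulting almost-everywhere convergence of Birkhoff averages into the norm formula $\|(uf(v))^n\|^{1/n}=\mathrm{esssup}_{z\in S^1}\left|f(\alpha^{n-1}z)\cdots f(z)\right|^{1/n}$ to get the lower bound $r(uf(v))\geq\Delta(f(v))$. The only difference is null-set bookkeeping: you fix a single good point via a countable intersection of full-measure sets so that the essential supremum dominates the Birkhoff average for all $n$ simultaneously, whereas the paper extracts, for each $\epsilon>0$, a positive-measure set $E$ and an $N$ with the averages at least $\int_0^1\ln|f(e^{2\pi ix})|\,dx-\epsilon$ uniformly for $n\geq N$ on $E$ --- both devices are valid and serve the same purpose.
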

\begin{proof}
Let $T:x\rightarrow x+\theta (\text{mod}\, 1)$. Then $T$ is a measure preserving ergodic
transformation of $[0,1]$. By  Birkhoff's Ergodic theorem and the assumption
 $\int_0^1|(\ln |f(e^{2\pi ix})|)|dx<\infty$, for almost all $x\in [0,1]$
\[
\lim_{n\rightarrow\infty}\frac{1}{n}\sum_{k=0}^{n-1}\ln|f(\alpha^k
e^{2\pi ix})|=\int_0^1\ln |f(e^{2\pi ix})|dx.
\]
Let $\epsilon>0$. Then there is a measurable subset $E$ of $[0,1]$
with $m(E)>0$ and $N\in \mathbb{N}$ such that for all $x\in E$ and
$n\geq N$
\[
\frac{1}{n}\sum_{k=0}^{n-1}\ln|f(\alpha^k e^{2\pi ix})|\geq
\int_0^1\ln |f(e^{2\pi ix})|dx-\epsilon,
\]
i.e.,
\[
\left|f(\alpha^{n-1}e^{2\pi ix})f(\alpha^{n-2}e^{2\pi ix})\cdots
f(e^{2\pi ix})\right|^{1/n}\geq \exp\left(\int_0^1\ln |f(e^{2\pi
ix})|dx-\epsilon\right).
\]
This implies that
\[
r(uf(v))\geq \lim_{n\rightarrow\infty}\max_{x\in
E}\left|f(\alpha^{n-1}e^{2\pi ix})f(\alpha^{n-2}e^{2\pi ix})\cdots
f(e^{2\pi ix})\right|^{1/n}\geq \exp\left(\int_0^1\ln |f(e^{2\pi
ix})|dx-\epsilon\right).
\]
Since $\epsilon>0$ is arbitrary, $r(uf(v))\geq \exp(\int_0^1\ln
|f(e^{2\pi ix})|dx)=\Delta(f(v))$.
\end{proof}

Recall that a continuous transformation $T: X\rightarrow X$ of a compact metrisable space $X$ is called
\emph{uniquely ergodic} if there is only one $T$ invariant Borel probability measure $\mu$ on $X$. If $T$ is uniquely ergodic, then
$\frac{1}{n}\sum_{i=0}^{n-1}f(T^ix)$ converges uniformly to $\int_Xf(x)d\mu(x)$
for every $f\in C(X)$ (see Theorem 6.19 of~\cite{Wa}). It is well-known that the irrational rotation of the unit circle is uniquely
ergodic. If we apply the above fact to $\ln |f(z)|$, then we easily see the following lemma.

\begin{Lemma}\label{L:geq}
If both $f(z), f(z)^{-1}\in C(S^1)$ and $\epsilon>0$, then
there exists an $N\in \mathbb{N}$ such that for all $n\geq N$ and
all $x\in [0,1]$
\[
\left(\left|f(\alpha^{n-1}e^{2\pi ix})f(\alpha^{n-2}e^{2\pi
ix})\cdots f(e^{2\pi ix})\right|\right)^{1/n}\leq
\exp\left(\int_0^1\ln |f(e^{2\pi ix})|dx+\epsilon\right).
\]
\end{Lemma}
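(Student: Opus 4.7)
The plan is to invoke exactly the uniform version of Birkhoff's ergodic theorem that is quoted immediately before the lemma statement, applied to the continuous function $g(z)=\ln|f(z)|$ on $S^1$. The hypothesis that both $f$ and $f^{-1}$ lie in $C(S^1)$ is doing real work: since $S^1$ is compact, it guarantees that $|f|$ is bounded above and bounded away from $0$, so $g=\ln|f|$ is indeed an element of $C(S^1)$.

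Let $T:S^1\to S^1$ be the rotation $T(z)=\alpha z$. Because $\theta$ is irrational, $T$ is uniquely ergodic, with the normalized Haar measure $m$ on $S^1$ as its unique $T$-invariant Borel probability measure. Applying the uniform ergodic theorem (Theorem 6.19 of \cite{Wa}) to $g$ yields
\[
\frac{1}{n}\sum_{k=0}^{n-1} \ln|f(\alpha^k z)| \longrightarrow \int_{S^1} \ln|f|\,dm = \int_0^1 \ln|f(e^{2\pi ix})|\,dx,
\]
the convergence being uniform in $z\in S^1$.

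Given $\epsilon>0$, uniform convergence supplies an $N\in\mathbb{N}$ such that for every $n\geq N$ and every $x\in[0,1]$ (setting $z=e^{2\pi ix}$),
\[
\frac{1}{n}\sum_{k=0}^{n-1}\ln\bigl|f(\alpha^k e^{2\pi ix})\bigr| \;\leq\; \int_0^1 \ln|f(e^{2\pi ix})|\,dx+\epsilon.
\]
Exponentiating and using $\sum_{k=0}^{n-1}\ln|f(\alpha^k e^{2\pi ix})| = \ln\bigl|f(\alpha^{n-1}e^{2\pi ix})f(\alpha^{n-2}e^{2\pi ix})\cdots f(e^{2\pi ix})\bigr|$ produces the desired estimate.

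There is no serious obstacle; the hypothesis on $f$ is engineered precisely so that the uniform ergodic theorem can be applied to $\ln|f|$. Note the contrast with Lemma~\ref{L:leq}: there, only $\ln|f|\in L^1$ was available, so Birkhoff's theorem supplied a.e.\ convergence and a lower bound valid on a set $E$ of positive measure, which sufficed for the reverse estimate. The continuity assumption here is exactly what upgrades a.e.\ convergence to uniform convergence, and hence upgrades a one-sided bound on a good set to an upper bound holding for \emph{every} $x\in [0,1]$, which will be needed in the next section to match the spectral radius with $\Delta(f(v))$ from above.
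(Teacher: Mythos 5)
Your proof is correct and is precisely the argument the paper intends: the remark preceding the lemma cites the uniform convergence of Birkhoff averages for the uniquely ergodic irrational rotation (Theorem 6.19 of~\cite{Wa}) and says to apply it to $\ln|f(z)|$, which is continuous exactly because $f,f^{-1}\in C(S^1)$ forces $|f|$ to be bounded away from $0$. You have simply written out the details the paper leaves implicit, including the correct role of the hypothesis, so there is nothing to add.
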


\begin{Theorem}\label{T:spectral radius}
If $f(z)\in C(S^1)$, then $r(uf(v))=\Delta(f(v))$.
\end{Theorem}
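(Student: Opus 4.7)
The plan is to prove the two-sided bound $r(uf(v)) \leq \Delta(f(v)) \leq r(uf(v))$ by combining Lemmas 2.1 and 2.2, with a cutoff approximation to handle the possible zeros of $f$, which is the only reason Lemma 2.2 cannot be applied directly.

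For the lower bound, I would split on whether $\Delta(f(v))$ vanishes. If $\Delta(f(v))=0$ the inequality $r(uf(v))\geq 0$ is trivial. Otherwise $\int_0^1\ln|f(e^{2\pi ix})|\,dx>-\infty$, and since $f$ is continuous hence bounded, $\ln|f|$ is bounded above by $\ln\|f\|_\infty$, so $\ln|f|\in L^1([0,1])$. Lemma 2.1 then applies and gives $r(uf(v))\geq\Delta(f(v))$.

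For the upper bound, the obstacle is that Lemma 2.2 requires both $f$ and $f^{-1}$ to be continuous, which fails as soon as $f$ has a zero. To get around this, for each $\delta>0$ I define the continuous dominating function
\[
g_\delta(z):=\max\bigl(|f(z)|,\delta\bigr),
\]
which is continuous, bounded above by $\max(\|f\|_\infty,\delta)$, and bounded below by $\delta>0$, so $g_\delta^{-1}\in C(S^1)$. Pointwise,
\[
\bigl|f(\alpha^{n-1}z)f(\alpha^{n-2}z)\cdots f(z)\bigr|\leq g_\delta(\alpha^{n-1}z)g_\delta(\alpha^{n-2}z)\cdots g_\delta(z),
\]
so the essential supremum of the left side is dominated by that of the right side. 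Applying Lemma 2.2 to $g_\delta$ with $\epsilon>0$ and passing to the $n$-th root and then the limit yields
\[
r(uf(v))\leq\exp\!\Bigl(\int_0^1\ln g_\delta(e^{2\pi ix})\,dx+\epsilon\Bigr),
\]
and letting $\epsilon\downarrow 0$ removes the $\epsilon$.

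Finally, I let $\delta\downarrow 0$. Since $g_\delta\downarrow|f|$ pointwise and $\ln g_\delta$ is uniformly bounded above by $\ln\max(\|f\|_\infty,1)$, monotone convergence (applied to the decreasing sequence, allowing the limit to be $-\infty$) gives
\[
\int_0^1\ln g_\delta(e^{2\pi ix})\,dx\;\longrightarrow\;\int_0^1\ln|f(e^{2\pi ix})|\,dx,
\]
so $r(uf(v))\leq\Delta(f(v))$. Combined with the lower bound this proves the theorem. The main obstacle is genuinely the zeros of $f$: Lemma 2.2 fails there and the proof of the upper bound hinges on the sandwich by $g_\delta$ and the fact that the monotone convergence limit agrees with $\Delta(f(v))$ even in the degenerate case $\Delta(f(v))=0$, where the upper bound forces $r(uf(v))=0$ and the lower bound is automatic.
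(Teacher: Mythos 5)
Your proof is correct and follows essentially the same route as the paper: both use the cutoff functions $\max(|f|,\delta)$ (the paper's $f_k=\max\{|f|,1/k\}$ after normalizing $\|f\|_\infty\leq 1$) to make Lemma~\ref{L:geq} applicable, pass to the limit via monotone convergence allowing the value $-\infty$, and obtain the lower bound from Lemma~\ref{L:leq} after checking $\ln|f|\in L^1$ when $\Delta(f(v))>0$. The only cosmetic difference is that you observe the case $\Delta(f(v))=0$ makes the lower bound trivial directly, whereas the paper derives $r(uf(v))=0$ from the upper bound; the mathematical content is identical.
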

\begin{proof}
We may assume that $|f(z)|\leq 1$ for all $z\in S^1$. For $k\in
\mathbb{N}$, define $f_k(z)=\max\{|f(z)|,\frac{1}{k}\}$. Then
$f_k(z), f_k(z)^{-1}\in C(S^1)$. Let $\epsilon>0$. By
Lemma~\ref{L:geq}, there exists an $N\in \mathbb{N}$ such that
for $n\geq N$ and all $x\in [0,1]$,
\[
\left|f(\alpha^{n-1}e^{2\pi ix})f(\alpha^{n-2}e^{2\pi ix})\cdots
f(e^{2\pi ix})\right|^{1/n}\leq \left|f_k(\alpha^{n-1}e^{2\pi
ix})f_k(\alpha^{n-2}e^{2\pi ix})\cdots f_k(e^{2\pi
ix})\right|^{1/n}\]\[\leq \exp\left(\int_0^1\ln f_k(e^{2\pi
ix})dx+\epsilon\right).
\]
Let $n\rightarrow\infty$, then \[r(uf(v))\leq
\exp\left(\int_0^1\ln f_k(e^{2\pi ix})dx+\epsilon\right).\]
Since $\epsilon>0$ is arbitrary, \[r(uf(v))\leq
\exp\left(\int_0^1\ln f_k(e^{2\pi ix})dx\right),\quad \forall
k\in\mathbb{N}.\] Note that
\[1=\frac{1}{f_1(e^{2\pi ix})}\leq \frac{1}{f_2(e^{2\pi
ix})}\leq \cdots\leq \frac{1}{f_n(e^{2\pi ix})}\leq \cdots,\] and
\[
\lim_{k\rightarrow\infty} \frac{1}{f_k(e^{2\pi
ix})}=\frac{1}{|f(e^{2\pi ix})|},\quad\forall x\in [0,1].
\]
So
\[
0\leq -\ln{f_1(e^{2\pi ix})}\leq -\ln{f_2(e^{2\pi ix})}\leq
\cdots\leq -\ln{f_n(e^{2\pi ix})}\leq \cdots,
\]
and \[ \lim_{k\rightarrow\infty} -\ln{f_k(e^{2\pi
ix})}=-\ln{|f(e^{2\pi ix})|},\quad \forall x\in[0,1].\] The
monotone convergence theorem implies that
\[
\lim_{k\rightarrow\infty} \int_0^1-\ln{f_k(e^{2\pi
ix})}dx=-\int_0^1\ln{|f(e^{2\pi ix})|}dx
\]
and therefore,
\[
r(uf(v))\leq \lim_{k\rightarrow\infty}\exp\left(\int_0^1\ln
f_k(e^{2\pi ix})dx\right)= \exp\left(\int_0^1\ln |f(e^{2\pi
ix})|dx\right).
\]
Now if $\int_0^1\ln |f(e^{2\pi ix})|dx=-\infty$, then
$r(uf(v))\leq \exp\left(\int_0^1\ln |f(e^{2\pi ix})|dx\right)=0$
and hence $r(uf(v))=\exp\left(\int_0^1\ln |f(e^{2\pi
ix})|dx\right)=0$. If $\int_0^1\ln |f(e^{2\pi ix})|dx>-\infty$,
then $\int_0^1\left|\left(\ln |f(e^{2\pi
ix})|\right)\right|dx<\infty$. By Lemma~\ref{L:leq}, $r(uf(v))\geq
\exp\left(\int_0^1\ln |f(e^{2\pi ix})|dx\right)$ and hence
$r(uf(v))=\exp\left(\int_0^1\ln |f(e^{2\pi
ix})|dx\right)=\Delta(f(v))$.
\end{proof}

\section{Spectrum of $uf(v)$}

\begin{Lemma}\label{L:less than 1}
Let $f_n(z)\in L^2(S^1,m)$ for $n\in \mathbb{Z}$ and
assume $T=\sum_{n=-\infty}^\infty u^nf_n(v)\in R$. Then for each $n\in \mathbb{Z}$,
\begin{equation}\label{E:geq 1}
\|f_n(v)\|\leq \|T\|.
\end{equation}
\end{Lemma}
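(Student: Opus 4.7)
My plan is to exploit the canonical gauge action of the circle on $R$. For each $z\in S^1$, let $\alpha_z$ denote the $*$-automorphism of $R$ determined by $\alpha_z(u)=zu$ and $\alpha_z(v)=v$. Its existence is a routine consequence of the commutation relation: the pair $(zu,v)$ still satisfies $v(zu)=e^{2\pi i\theta}(zu)v$ and still generates $R$, so by the universality property of the pair of generators satisfying the irrational rotation relation, $u\mapsto zu,\ v\mapsto v$ extends to a normal $*$-automorphism. Each $\alpha_z$ is trace-preserving (as $R$ has a unique normal tracial state), and $z\mapsto\alpha_z$ is pointwise $\sigma$-strongly continuous; this can be checked on the $*$-algebra generated by $u,v$ and then extended using that each $\alpha_z$ is an $L^2$-isometry.

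Next I introduce the $n$-th Fourier projection $\Phi_n\colon R\to R$ defined by the $\sigma$-weak integral
\[
\Phi_n(X)=\int_{S^1}\bar z^{\,n}\,\alpha_z(X)\,dm(z),\qquad X\in R,
\]
where $m$ is normalized Haar measure on $S^1$. This makes sense because $z\mapsto\alpha_z(X)$ is $\sigma$-weakly continuous and uniformly bounded by $\|X\|$. Since each $\alpha_z$ is isometric in operator norm and $m$ is a probability measure, $\|\Phi_n(X)\|\le\|X\|$. A direct computation on monomials gives $\Phi_n(u^m g(v))=\delta_{n,m}\,u^n g(v)$, so on finite linear combinations $\Phi_n$ is precisely the operator of extracting the $n$-th Fourier coefficient.

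Finally I would apply $\Phi_n$ to the given $T\in R$. The assumption that $T=\sum_m u^m f_m(v)$ with $f_m\in L^2(S^1,m)$ is interpreted as a Fourier expansion in $\|\cdot\|_2$ (the trace norm on $R$); since each $\alpha_z$ is $\|\cdot\|_2$-isometric, one can swap the integral defining $\Phi_n$ with the sum by Fubini and the normality of $\Phi_n$, obtaining $\Phi_n(T)=u^n f_n(v)$. Combining with the norm estimate from the previous paragraph yields
\[
\|f_n(v)\|=\|u^n f_n(v)\|=\|\Phi_n(T)\|\le\|T\|,
\]
which is the desired inequality. The main (and only) nontrivial point is the verification that $\Phi_n(T)=u^n f_n(v)$, i.e.\ that the $\sigma$-weak integral does select the $n$-th Fourier coefficient of the series; once the topology of convergence of the series is fixed, this is immediate by testing against vector states and using Fubini.
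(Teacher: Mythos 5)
Your proof is correct, and at bottom it runs on the same mechanism as the paper's, though you build the key map by hand where the paper cites it off the shelf. The paper's entire proof is one line: let $N=W^*(v)$ and let $E_N:R\to N$ be the trace-preserving conditional expectation (a norm-one projection, by the general theory of finite von Neumann algebras); then $\|f_n(v)\|=\|E_N(u^{-n}T)\|\leq \|u^{-n}T\|=\|T\|$, the identity $E_N(u^{-n}T)=f_n(v)$ being verified on the $\|\cdot\|_2$-convergent Fourier series exactly as in your last paragraph, since $E_N$ is the restriction to $R$ of the orthogonal projection of $L^2(R,\tau)$ onto $L^2(N,\tau)$. Your gauge-action average is precisely this map in disguise: from $\alpha_z(u^{-n}X)=\bar z^{\,n}u^{-n}\alpha_z(X)$ one gets $\Phi_n(X)=u^n\,\Phi_0(u^{-n}X)$, and $\Phi_0=E_N$ because averaging over the dual action is the conditional expectation onto its fixed-point algebra, which is $W^*(v)$. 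So your route re-proves, for this crossed product, the existence and contractivity of $E_N$; it buys self-containedness and an explicit integral formula, at the cost of the extra verifications you list --- in particular the normal extension of $\alpha_z$ to $R$ is best justified not by ``universality'' (which has no naive von Neumann analogue) but by noting that $\alpha_z$ is an automorphism of the weakly dense irrational rotation $C^*$-algebra preserving the trace, hence extends to the weak closure in the trace GNS representation.

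Two precision points are worth fixing in your write-up. First, in swapping $\Phi_n$ with the sum, ``normality of $\Phi_n$'' is not the right justification: the partial sums $\sum_{|m|\leq M}u^mf_m(v)$ need not be uniformly bounded in operator norm (Fourier partial sums of a bounded element of a crossed product can be unbounded), so $\sigma$-weak continuity on bounded sets does not apply; what you need, and in fact have, is that $\Phi_n$ is a $\|\cdot\|_2$-contraction --- each $\alpha_z$ is trace-preserving, so $\Phi_n$ extends to $L^2(R,\tau)$ and may be applied termwise to the $L^2$-convergent series. Second, since a priori $f_n\in L^2$ only, $u^nf_n(v)$ is not known to be bounded at the outset; the correct order of deductions is that $\Phi_n(T)\in R$ with $\|\Phi_n(T)\|\leq\|T\|$, that $\Phi_n(T)=u^nf_n(v)$ as elements of $L^2(R,\tau)$, and hence that $f_n(v)=u^{-n}\Phi_n(T)$ is in fact bounded with the stated norm bound. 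Your argument delivers this, but given how the lemma is phrased it deserves to be said explicitly.
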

\begin{proof}
Let $N$ be the von Neumann algebra generated by $v$. For any $n\in \mathbb{Z}$, we have
\[
\|f_n(v)\|=\|E_N(u^{-n}T)\|\leq \|T\|.
\]
\end{proof}

\begin{Lemma}\label{L:connected}
Let $f(z)\in L^\infty(S^1,m)$ and let $x=uf(v)$. Then the spectrum $\sigma(x)$ of $x$ is connected.
\end{Lemma}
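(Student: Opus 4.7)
The plan is to combine a rotational symmetry argument with a Riesz projection. First I would introduce the gauge automorphism $\Phi_\beta$ of $R$ determined by $\Phi_\beta(u)=\beta u$ and $\Phi_\beta(v)=v$ for $\beta\in S^1$; this is a well-defined automorphism because it preserves the relation $vu=\alpha uv$. Since $\Phi_\beta(x)=\beta x$, one has $\sigma(x)=\beta\sigma(x)$ for every $\beta\in S^1$, so $\sigma(x)$ is a union of circles centered at the origin, and its connectedness is equivalent to that of the modulus set $\{|z|:z\in\sigma(x)\}\subseteq[0,\infty)$.

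Assume toward a contradiction that $\sigma(x)$ is disconnected. Then there is some $c>0$ with the circle $\Gamma=\{|z|=c\}$ contained in $\rho(x)$, splitting $\sigma(x)$ into $K_1=\sigma(x)\cap\{|z|<c\}$ and $K_2=\sigma(x)\cap\{|z|>c\}$, both nonempty. Form the Riesz projection $p=\frac{1}{2\pi i}\oint_\Gamma(z-x)^{-1}\,dz\in W^*(x)\subseteq R$. Applying $\Phi_\beta$ and substituting $z=\beta w$ in the contour integral yields $\Phi_\beta(p)=\frac{1}{2\pi i}\oint_{\beta^{-1}\Gamma}(w-x)^{-1}\,dw=p$, since $\beta^{-1}\Gamma=\Gamma$. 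Hence $p$ lies in the fixed algebra of $\{\Phi_\beta\}_{\beta\in S^1}$, which is $\{v\}''$, so $p=\chi_E(v)$ for a measurable set $E\subseteq S^1$ with $m(E)\in(0,1)$. Using $\chi_E(v)u=u\chi_E(\alpha v)$, the identity $xp=px$ becomes $f\cdot(\chi_E-\chi_{\alpha^{-1}E})=0$ a.e., i.e., $E\triangle\alpha^{-1}E\subseteq\{f=0\}$ modulo null sets.

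When $m\{f=0\}=0$, this immediately gives $E=\alpha^{-1}E$ modulo null sets, and ergodicity of the irrational rotation by $\alpha$ forces $m(E)\in\{0,1\}$, contradicting nontriviality of $p$. When $m\{f=0\}>0$, the operator $x$ is not invertible, so $0\in\sigma(x)$; because $K_1$ is the inner component this gives $0\in K_1$ and $0\notin K_2$, so $x$ restricted to $(1-p)\mathcal{H}$ is invertible. Since $p\in\{v\}''$ commutes with $x^*x=|f(v)|^2$, the operator $(x|_{(1-p)\mathcal{H}})^*(x|_{(1-p)\mathcal{H}})=|f(v)|^2(1-p)$ must be bounded below on $(1-p)\mathcal{H}$, forcing $\{f=0\}\subseteq E$ modulo null sets. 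Combined with $E\triangle\alpha^{-1}E\subseteq\{f=0\}\subseteq E$ this yields $\alpha^{-1}E\subseteq E$, which by measure preservation is equality, and ergodicity again contradicts nontriviality of $p$.

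The main obstacle is this second case, in which $f$ vanishes on a set of positive measure: the commutation identity alone does not force $E$ to be rotation invariant, and one must extract the extra constraint $\{f=0\}\subseteq E$ from invertibility of $x$ on the outer spectral subspace. This packages the ``averaging technique'' alluded to in the introduction through the Riesz projection rather than an explicit integral over $S^1$.
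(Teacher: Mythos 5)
Your proof is correct, and it takes a genuinely different route from the paper's. The paper also begins with a Riesz idempotent $p$ coming from a disconnection of $\sigma(x)$, but instead of identifying $p$ it approximates it in norm by polynomials $a=\lambda+\sum_{n=1}^N\lambda_nx^n$ and applies Lemma~\ref{L:less than 1} (the Fourier-coefficient bound $\|f_n(v)\|\leq\|T\|$, i.e.\ the conditional expectation onto $\{v\}''$) to $\|a^2-a\|<\epsilon$: the zeroth coefficient gives $|\lambda^2-\lambda|<\epsilon$, whence $\lambda$, and then $\tau(p)$, is arbitrarily small, so $\tau(p)=0$ and $p=0$ by faithfulness of the trace. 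You instead make the averaging explicit: the gauge action fixes $p$, so $p=\chi_E(v)$, and ergodicity of the irrational rotation kills $E$. Two comments. First, ``preserves the relation'' is not by itself a construction of $\Phi_\beta$ at the von Neumann algebra level (there is no universal property there); you should either invoke the dual action on $R\cong L^\infty(S^1)\rtimes_\theta\mathbb{Z}$, or note that $(\beta u,v)$ and $(u,v)$ have the same joint $*$-moments under $\tau$, so the assignment extends to a trace-preserving (hence $\|\cdot\|_2$-continuous) automorphism; the identification of the fixed-point algebra with $\{v\}''$ then follows from the $\|\cdot\|_2$-Fourier expansion $T=\sum_n u^nf_n(v)$, the same structure behind Lemma~\ref{L:less than 1}. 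These are standard and easily supplied. Second, your route buys something concrete: it yields the rotation invariance $\sigma(x)=\beta\sigma(x)$ (used again in Theorem~\ref{T:spectrum}); it handles the case $m(\{f=0\})>0$ honestly (your extraction of $\{f=0\}\subseteq E$ from invertibility of $x$ on $(1-p)\mathcal{H}$, followed by upgrading $E\,\triangle\,\alpha^{-1}E\subseteq E$ to $\alpha^{-1}E=E$ by measure preservation, is precisely where a naive ergodicity argument would stall); and it sidesteps a delicate point in the paper's proof: evaluating characters of the commutative Banach algebra generated by $x$ and its resolvents gives $\|p-a\|\geq\sup_{z\in\sigma(x)}|\chi_{K_1}(z)-a(z)|$, and by the maximum modulus principle no polynomial can be uniformly small on the inner part of a rotation-symmetric disconnected spectrum while uniformly close to $1$ on the outer part (or vice versa) --- which is exactly the configuration forced by rotation symmetry --- so the asserted norm-approximation of the Riesz idempotent by polynomials in $x$ needs more care than the paper gives it, whereas your argument never uses it.
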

\begin{proof}
Suppose the spectrum $\sigma(x)$ of $x$ is not connected. Then the Riesz spectral decomposition theorem gives a nontrivial idempotent $p$ in the Banach algebra generated by $x$.  Let $\tau$ be the faithful trace on $R$. We may assume that
$0<\tau(p)\leq 1/2$. Let $\epsilon>0$ be sufficiently small and let $a=\lambda+\sum_{n=1}^N\lambda_nx^n$ such that $\|p-a\|<\epsilon/(2\|p\|^2+2)$. Since $|\tau(a)-\tau(p)|\leq \|p-a\|<\epsilon/(2\|p\|^2+2)$, we may assume that $0\leq \lambda<3/4$. Then
\[
\|a^2-a\|\leq \|a^2-pa\|+\|pa-p^2\|+\|p-a\|\leq ((\|p\|+\epsilon)+\|p\|+1)\|p-a\|<\epsilon.
\]
This implies that
\[
\left\|\left(\sum_{n=1}^N\lambda_nx^n+\lambda\right)
\left(\sum_{n=1}^N\lambda_nx^n+\lambda\right)-
\left(\sum_{n=1}^N\lambda_nx^n+\lambda\right)\right\|<\epsilon.
\]
By Lemma~\ref{L:less than 1}, we have
$|\lambda^2-\lambda|<\epsilon.$
Since $0<\lambda<3/4$, $\lambda<\lambda^2+\epsilon<\frac{3}{4}\lambda+\epsilon$. Therefore $0\leq \lambda<4\epsilon$.
 Thus $\tau(p)\leq \tau(a)+\epsilon/(2\|p\|^2+2)<5\epsilon$. Since $\epsilon>0$ is arbitrary, $\tau(p)=0$. So $p=0$. This is a contradiction.
\end{proof}

\begin{Theorem}\label{T:spectrum}
Let $f(z)\in C(S^1)$ and let $x=uf(v)$. Then the spectrum
of $x$ is given as follows:
\begin{enumerate}
\item If $f(v)$ is invertible, then
$\sigma(uf(v))=\Delta(f(v))S^1$. \item If $f(v)$ is not
invertible, then $\sigma(uf(v))=\overline{{\mathbb
B}(0,\Delta(f(v)))}$.
\end{enumerate}
Here $\Delta(f(v))$ is the Fuglede-Kadison determinant of $f(v)$.
\end{Theorem}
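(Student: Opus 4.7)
The plan is to combine three ingredients: the rotational symmetry of $\sigma(uf(v))$ coming from the commutation relation $vu=\alpha uv$; the connectedness supplied by Lemma~\ref{L:connected}; and the spectral radius identity of Theorem~\ref{T:spectral radius} applied both to $uf(v)$ and, in the invertible case, to its inverse.

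First I observe that $vuv^* = \alpha u$ gives $v(uf(v))v^* = \alpha\,uf(v)$, so $\sigma(uf(v)) = \alpha\,\sigma(uf(v))$. Iterating and invoking the density of $\{\alpha^k : k\in\mathbb{Z}\}$ in $S^1$ (which uses the irrationality of $\theta$), together with the closedness of the spectrum, yields $\beta\,\sigma(uf(v)) = \sigma(uf(v))$ for every $\beta\in S^1$. Combining this $S^1$-invariance with connectedness forces $\sigma(uf(v)) = \{z\in\mathbb{C} : |z|\in I\}$, where $I = \{|z| : z\in\sigma(uf(v))\}$ is a nonempty compact subinterval of $[0,\infty)$. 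Theorem~\ref{T:spectral radius} identifies $\max I = r(uf(v)) = \Delta(f(v))$, so it only remains to determine $\min I$.

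Suppose first that $f(v)$ is invertible; then $f\in C(S^1)$ has no zeros and $1/f\in C(S^1)$, and the relation $u^*vu = \alpha v$ lets me rewrite $(uf(v))^{-1} = f(v)^{-1}u^* = u^*g(v)$ with $g(z) = f(\alpha^{-1}z)^{-1}\in C(S^1)$. The relation $vu^* = \alpha^{-1} u^* v$ has the same form as $vu=\alpha uv$ after the substitution $(u,\alpha)\mapsto (u^*,\alpha^{-1})$, so the proofs in Section~2 go through verbatim and yield $r((uf(v))^{-1}) = \Delta(g(v)) = \Delta(f(v))^{-1}$, the last equality being translation invariance $\int_0^1 \ln|f(e^{2\pi i(x-\theta)})|\,dx = \int_0^1\ln|f(e^{2\pi ix})|\,dx$. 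Since $\sigma((uf(v))^{-1}) = \{z^{-1} : z\in\sigma(uf(v))\}$, this forces $\min I \geq \Delta(f(v))$, hence $I = \{\Delta(f(v))\}$ and $\sigma(uf(v)) = \Delta(f(v))\,S^1$. If instead $f(v)$ is not invertible, then $f$ has a zero on $S^1$ so $f(v)$ fails to be invertible in $R$; since $u$ is unitary, $uf(v)$ is then also not invertible, so $0\in\sigma(uf(v))$, giving $\min I = 0$ and $\sigma(uf(v)) = \overline{\mathbb{B}(0,\Delta(f(v)))}$.

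I expect the main obstacle to be the reuse of the spectral-radius machinery on $(uf(v))^{-1}$: one must verify that the proofs of Lemma~\ref{L:leq}, Lemma~\ref{L:geq}, and Theorem~\ref{T:spectral radius} only used the abstract commutation $vu=\alpha uv$ with $\theta$ irrational together with unique ergodicity of $x\mapsto x+\theta$, and that both features are preserved under $(u,\alpha)\mapsto (u^*,\alpha^{-1})$ since $x\mapsto x-\theta$ is still a uniquely ergodic irrational rotation. Once this reduction is made, the remainder is routine bookkeeping of Fuglede--Kadison determinants.
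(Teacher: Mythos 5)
Your proposal is correct and follows essentially the same route as the paper: spectral radius of $uf(v)$ via Theorem~\ref{T:spectral radius}, rotation symmetry of the spectrum from $vuv^*=\alpha u$, connectedness from Lemma~\ref{L:connected} in the non-invertible case, and in the invertible case the identity $(uf(v))^{-1}=u^*f(e^{-2\pi i\theta}v)^{-1}$ together with $r((uf(v))^{-1})=\Delta(f(v))^{-1}$ to pin the spectrum to the circle $\Delta(f(v))S^1$. Your only additions are to make explicit two points the paper leaves tacit, namely the density argument upgrading $\alpha$-invariance to full $S^1$-invariance and the verification that the Section~2 machinery survives the substitution $(u,\alpha)\mapsto(u^*,\alpha^{-1})$; both are sound.
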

\begin{proof}
Suppose $f(v)$ is not invertible, then $0\in \sigma(uf(v))$. By
Theorem~\ref{T:spectral radius}, we have $r(uf(v))=\Delta(f(v))$.
Clearly $\sigma(uf(v))$ is rotation symmetric. By
Lemma~\ref{L:connected}, $\sigma(uf(v))=\overline{{\mathbb
B}(0,\Delta(f(v))}$. Suppose $f(v)$ is invertible. By
Theorem~\ref{T:spectral radius}, $r(uf(v))=\Delta(f(v))$. Note
that $(uf(v))^{-1}=f(v)^{-1}u^*=u^*f(e^{-2\pi i\theta}v)^{-1}$. So
\[
r((uf(v))^{-1})=\Delta(f(e^{-2\pi
i\theta}v)^{-1})=\exp\left(\int_0^1\ln |f(e^{-2\pi i\theta}e^{2\pi
ix})|^{-1}dx \right)=\exp\left(\int_0^1\ln |f(e^{2\pi
ix})|^{-1}dx\right)\]\[=\Delta(f(v)^{-1})=\left(\Delta(f(v))\right)^{-1}.
\]
So $\sigma(uf(v))$ is contained in $\Delta(f(v))S^1$. Since
$\sigma(uf(v))$ is rotation symmetric,
$\sigma(uf(v))=\Delta(f(v))S^1$.
\end{proof}

\begin{Remark}
\emph{A natural question is that if the above theorem can be generalized to $f\in L^\infty(S^1)$. It can be shown that the above
theorem can be generalized to a larger class of functions which are essentially an upper semi-continuous functions. However the formula in
the above theorem does not hold for some $f\in L^\infty(S^1,m)$. Indeed one can construct a proper open subset $E$ of $S^1$ such that $r(u\chi_E(v))=1$ but
$\Delta(\chi_E)=0$.
}
\end{Remark}

\section{Von Neumann algebras generated by $uf(v)$}

\begin{Lemma}
Let $f\in L^\infty(S^1)$. If $f(v)$ is  not a scalar operator, then the von Neumann subalgebra generated by $u$ and $f(v)$ is an irreducible subfactor of $R$.
\end{Lemma}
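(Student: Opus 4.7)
The goal is to show $M := W^*(u, f(v))$ satisfies $M' \cap R = \mathbb{C}1$, which is the definition of irreducibility of the subfactor $M \subset R$ and, via $Z(M) = M \cap M' \subseteq M' \cap R$, automatically forces $M$ to be a factor. The plan is to analyze an arbitrary $T \in M' \cap R$ through its Fourier expansion with respect to $u$.

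Because $\tau(u^n v^m) = \delta_{n,0}\delta_{m,0}$, the family $\{u^n v^m\}_{n,m \in \mathbb{Z}}$ is an orthonormal basis of $L^2(R,\tau)$, so every $T \in R$ admits an $L^2$-convergent expansion $T = \sum_{n \in \mathbb{Z}} u^n g_n(v)$ with $g_n(v) = E_N(u^{-n}T) \in N := W^*(v) \cong L^\infty(S^1,m)$. If $T \in M' \cap R$, I would first exploit commutation with $u$: using $g(v)u = u g(\alpha v)$, the equation $uT = Tu$ turns, coefficient by coefficient, into $g_n(v) = g_n(\alpha v)$ for every $n$. Since the rotation by $\alpha = e^{2\pi i\theta}$ is ergodic on $(S^1, m)$ (as $\theta$ is irrational), each $g_n$ is a.e. constant, say $g_n = c_n$, and therefore $T = \sum_n c_n u^n \in W^*(u)$.

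Next I would impose commutation with $f(v)$. Using $u^n f(v) = f(\alpha^{-n}v)\,u^n$, the identity $Tf(v) = f(v)T$ reduces, term by term in the Fourier expansion, to $c_n\bigl(f(\alpha^{-n}z) - f(z)\bigr) = 0$ in $L^\infty(S^1,m)$ for every $n$. For $n \neq 0$, since $n\theta \notin \mathbb{Z}$, the rotation $z \mapsto \alpha^{-n} z$ is itself an ergodic irrational rotation; hence if $c_n \neq 0$ one would conclude that $f$ is a.e. constant, contradicting the hypothesis that $f(v)$ is not a scalar. Therefore $c_n = 0$ for all $n \neq 0$ and $T = c_0 \cdot 1 \in \mathbb{C}1$, proving $M' \cap R = \mathbb{C}1$.

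The only delicate point, and where I would be careful, is that the Fourier expansion $T = \sum_n u^n g_n(v)$ only converges in $\|\cdot\|_2$; this is nevertheless sufficient because left and right multiplication by the bounded operators $u$ and $f(v)$ are continuous on $L^2(R,\tau)$, so one may apply them termwise and read off coefficients unambiguously. Everything else is a direct calculation from the Weyl-type relation $vu = \alpha uv$ combined with ergodicity of every nontrivial irrational rotation.
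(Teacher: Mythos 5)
Your proof is correct and follows essentially the same route as the paper: expand an element of the relative commutant along powers of $u$, use commutation with $u$ to place it in $W^*(u)$, then use commutation with $f(v)$ together with ergodicity of the irrational rotation $z\mapsto e^{2\pi i n\theta}z$ to force $c_n=0$ for all $n\neq 0$. The only difference is cosmetic: the paper simply cites that $W^*(u)$ is maximal abelian in $R$ to conclude $T\in W^*(u)$, whereas you re-derive this from the coefficient identity $g_n(v)=g_n(\alpha v)$ and ergodicity, which is the standard proof of that fact in any case.
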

\begin{proof}
Let $N$ be the von Neumann algebra generated by $u$ and $f(v)$. Suppose $x\in R$ commutes with $N$. Then $xu=ux$. Since the von Neumann subalgebra generated by $u$ is a maximal abelian von Neumann subalgebra of $R$, $x$ is in the von Neumann algebra generated by $u$. So $x=\sum_{n\in \mathbb{Z}}\alpha_n u^n$, where $\sum_{n\in \mathbb{Z}}|\alpha_n|^2<\infty$. Now $xf(v)=f(v)x$ implies that
\[
\sum_{n\in \mathbb{Z}}\alpha_n u^nf(v)=\sum_{n\in \mathbb{Z}}\alpha_nu^nf(e^{2\pi in\theta}v).
\]
So $\alpha_n u^nf(v)=\alpha_nu^nf(e^{2\pi in\theta}v)$ for all $n\in \mathbb{Z}$. If for some $n\neq 0$ we have $\alpha_n\neq 0$, then $f(v)=f(e^{2\pi in\theta}v)$. This implies that $f(v)$ is a scalar operator, which contradicts to the assumption. Therefore, $x$ is a scalar operator and $N$ is an irreducible subfactor of $R$.
\end{proof}

\begin{Theorem}\label{T:Liu}
Let $f\in L^\infty(S^1)$ such that $f(v)$ is not a scalar operator. Let $N$ be the irreducible subfactor of $R$ generated by $u$ and $f(v)$. Then $[R:N]$ is a finite integer. Furthermore, the following conditions are equivalent:
\begin{enumerate}
\item $f(z)$ is a periodic function with minimal period $e^{2\pi i/n}$;
\item Suppose $f(z)=\sum_{k\in \mathbb{Z}} \alpha_k z^k$ is the Fourier series of $f(z)$. Then $n={\rm gcd}\{k:\,\alpha_k\neq 0\}$;
\item $N=W^*(u,v^n)$.
\end{enumerate}
\end{Theorem}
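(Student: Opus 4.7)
The plan is to identify $N$ explicitly as $W^*(u,v^n)$, which will simultaneously yield the finite-index assertion and all three equivalences. The equivalence $(1)\Leftrightarrow(2)$ is a direct Fourier computation on $S^1$: $f(z)=f(e^{2\pi i/m}z)$ a.e.\ iff $\alpha_k(e^{2\pi ik/m}-1)=0$ for every $k$, iff $m\mid k$ whenever $\alpha_k\neq 0$. The minimal period $e^{2\pi i/n}$ therefore corresponds to the largest such $m$, which is $n=\gcd\{k:\alpha_k\neq 0\}$; this is a finite positive integer because $f(v)$ is non-scalar.

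I would next prove $(2)\Rightarrow(3)$, which also gives $[R:N]=n<\infty$. Writing $f(v)=g(v^n)$ with $g(z)=\sum_j\alpha_{nj}z^j$ and $S':=\{j:\alpha_{nj}\neq 0\}$, condition (2) is equivalent to $\gcd(S')=1$. The inclusion $N\subseteq W^*(u,v^n)$ is immediate, and $[R:W^*(u,v^n)]=n$ follows from $W^*(u,v^n)$ being the fixed-point algebra of the outer order-$n$ automorphism $\sigma$ of $R$ given by $\sigma(u)=u$, $\sigma(v)=e^{2\pi i/n}v$, with Pimsner--Popa basis $\{1,v,\ldots,v^{n-1}\}$. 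For the reverse inclusion I would exploit the conjugation identity $uv^nu^{-1}=\omega^{-1}v^n$ with $\omega=e^{2\pi in\theta}$, giving
\[
u^kf(v)u^{-k}=g(\omega^{-k}v^n)=\sum_j\alpha_{nj}\omega^{-jk}v^{jn}\in N.
\]
Since $n\theta\notin\mathbb Q$, $\omega$ is not a root of unity, and for each $j_0\in S'$ the Ces\`aro averages
\[
A_K=\frac{1}{K}\sum_{k=0}^{K-1}\omega^{j_0k}u^kf(v)u^{-k}=\sum_j\alpha_{nj}v^{jn}\cdot\frac{1}{K}\sum_{k=0}^{K-1}\omega^{(j_0-j)k}
\]
converge to $\alpha_{nj_0}v^{j_0n}$ in $\|\cdot\|_2$: the inner character sums vanish as $K\to\infty$ for $j\neq j_0$, orthonormality of $\{v^{jn}\}_{j\in\mathbb Z}$ in $L^2(R,\tau)$ (a consequence of $v^n$ being a Haar unitary) yields a coordinatewise $L^2$ decomposition, and $\sum_j|\alpha_{nj}|^2=\|g\|_2^2<\infty$ lets dominated convergence close the bound. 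Since $\|A_K\|\leq\|f\|_\infty$ and $N$ is a von Neumann algebra, bounded $\|\cdot\|_2$-convergence produces $v^{j_0n}\in N$ for every $j_0\in S'$. Finally $\gcd(S')=1$ supplies integers $a_i$ with $\sum_ia_ij_i=1$ for some $j_i\in S'$, and unitarity of each $v^{j_in}$ gives $v^n=\prod_i(v^{j_in})^{a_i}\in N$; thus $N=W^*(u,v^n)$ and $[R:N]=n$.

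For $(3)\Rightarrow(2)$ the key identity is $W^*(u,v^n)\cap W^*(v)=W^*(v^n)$, obtained from the same $\sigma$: any $x=\sum_jc_jv^j\in W^*(v)$ with $\sigma(x)=x$ satisfies $c_j=e^{2\pi ij/n}c_j$, forcing $c_j=0$ unless $n\mid j$. Applied to $f(v)\in N\cap W^*(v)$, this yields $\alpha_k=0$ unless $n\mid k$, so $n$ divides $d:=\gcd\{k:\alpha_k\neq 0\}$. If $d>n$, then $f(v)\in W^*(v^d)$ would give $N\subseteq W^*(u,v^d)\subsetneq W^*(u,v^n)=N$, a contradiction, whence $n=d$.

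The main obstacle is the averaging step that extracts each $v^{j_0n}$ from the $u$-conjugates of $f(v)$: one must control the $\|\cdot\|_2$-convergence of the Ces\`aro sums using Haar-unitarity of $v^n$ and the irrationality of $n\theta$, and then invoke Bezout's identity to descend from the $v^{j_0n}$, $j_0\in S'$, down to $v^n$ itself. The remaining bookkeeping---the Fourier description of the fixed-point algebra and the strict-index comparison---is routine once this extraction is in place.
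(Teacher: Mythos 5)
Your proposal is correct, and it arrives at the paper's own destination $N=W^*(u,v^n)$ by the same overall strategy (extract powers of $v$ from the conjugates $u^kf(v)u^{-k}$, then descend to $v^n$ by Bezout), but the extraction device is genuinely different. The paper first uses density of $\{e^{2\pi ik\theta}:k\in\mathbb{Z}\}$ in $S^1$ to conclude $f(e^{2\pi it}v)\in N$ for \emph{all} $t\in[0,1]$, and then integrates against characters, $g_k(z)=\int_0^1e^{2\pi ikt}f(e^{-2\pi it}z)\,dt=\alpha_kz^k$, obtaining every $v^k$ with $\alpha_k\neq0$ in one stroke; you instead stay on the discrete orbit and use the weighted Ces\`aro--Weyl averages $A_K=\frac{1}{K}\sum_{k=0}^{K-1}\omega^{j_0k}u^kf(v)u^{-k}$, exploiting that $\omega=e^{2\pi in\theta}$ is not a root of unity. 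Your variant has the merit of making explicit a point the paper glosses: for general $f\in L^\infty(S^1)$ the map $t\mapsto f(e^{2\pi it}v)$ is continuous only in $\|\cdot\|_2$, not in operator norm, so the paper's density step already needs exactly the norm-bounded, $\|\cdot\|_2$-convergent limiting argument that you spell out for $A_K$ (norm-bounded $\|\cdot\|_2$-limits of elements of $N$ remain in $N$). You are also more self-contained on the two points the paper treats tersely: the index claim $[R:W^*(u,v^n)]=n$, which the paper merely asserts and you justify via the fixed-point algebra of the order-$n$ automorphism $\sigma(u)=u$, $\sigma(v)=e^{2\pi i/n}v$ and the Pimsner--Popa basis $\{1,v,\dots,v^{n-1}\}$ (your outerness claim does deserve a one-line check, e.g.\ using that $\{u\}''$ is a masa in $R$, or it can be bypassed entirely since the basis computation $E_N(v^{r-s})=\delta_{rs}$ for $0\le r,s<n$ yields the index directly); and the implication $(3)\Rightarrow(2)$, where the paper bootstraps from $(2)\Rightarrow(3)$ applied to $m=\gcd\{k:\alpha_k\neq0\}$ and concludes $m=n$ from $W^*(u,v^m)=W^*(u,v^n)$ without saying why distinct exponents give distinct subalgebras, whereas your identity $W^*(u,v^n)\cap W^*(v)=W^*(v^n)$, read off from $\sigma$-invariance of Fourier coefficients, settles this cleanly and also yields the strictness $W^*(u,v^d)\subsetneq W^*(u,v^n)$ for $n\mid d$, $d>n$.
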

\begin{proof}
$(1)\Leftrightarrow (2)$. Suppose $f(z)$ is a periodic function with minimal period $e^{2\pi i/n}$ and $m={\rm gcd}\{k:\,\alpha_k\neq 0\}$.
 Then $f(z)=f(e^{2\pi i/m}z)$ for almost all $z\in S^1$. So $f(z)$ is a periodic function period $e^{2\pi i/m}$. Since $e^{2\pi i/n}$ is a minimal period of $f(z)$, $n=mj$ for some positive integer $j$. Suppose $j\geq 2$. Then there exists $k_0$ such that $\alpha_{k_0}\neq 0$ and $n$ is not a factor of $k_0$. Since $f(z)=f(e^{2\pi i/n}z)$, by comparing the coefficients of both sides, we have
\[
\alpha_{k_0}z^{k_0}=\alpha_{k_0}e^{2\pi ik_0/n}z^{k_0}.
\]
This is a contradiction. Thus $n=m$.

$(2)\Rightarrow (3)$. Note that
\[
(u^*)^kf(v)u^k=f(e^{2\pi i k\theta}v).
\]
So $f(e^{2\pi i k\theta}v)\in N$ for all $k\in \mathbb{Z}$. Since $\{e^{2\pi i k\theta}: k\in \mathbb{Z}\}$ is dense in the unit circle $S$, $f(e^{2\pi it}v)\in N$ for all $t\in [0,1]$. For $k\in \mathbb{Z}$ and $z\in S^1$,
\[
g_k(z)=\int_0^1 e^{2\pi ik t}f(ze^{-2\pi it})dt=\alpha_k z^k.
\]
Thus if $\alpha_k\neq 0$, then $\alpha_k^{-1}g_k(v)=v^k\in N$. Since $n={\rm gcd}\{k:\,\alpha_k\neq 0\}$, $v^n\in N$. This proves that $W^*(u,v^n)\subseteq N$. Clearly, $N\subseteq W^*(u,v^n)$. So $N=W^*(u,v^n)$.

$(3)\Rightarrow (2)$. Suppose $N=W^*(u,v^n)$ and $m={\rm gcd}\{k:\,\alpha_k\neq 0\}$. By $(2)\Rightarrow (3)$, $W^*(u,v^n)=N=W^*(u,v^m)$. Hence $m=n$.
\end{proof}

\begin{Corollary}\label{C:Liu}
Suppose the zero set of $f(z)\in L^\infty(S^1)$ has Lebesgue measure zero and $|f|(v)$ is not a scalar operator. Then $W^*(uf(v))$ is an irreducible subfactor of $R$ with index $n$ for some positive integer $n$.
\end{Corollary}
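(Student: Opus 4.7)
The plan is to reduce the corollary to Theorem~\ref{T:Liu} by taking the polar decomposition of $x := uf(v)$. The role of the measure-zero hypothesis on the zero set of $f$ is precisely to ensure that the partial isometry in the polar decomposition is a unitary, so that $W^*(x)$ can be rewritten in a form to which Theorem~\ref{T:Liu} applies.

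Concretely, set $g(z) := f(z)/|f(z)|$ on $\{f \neq 0\}$ and extend $g$ to any unimodular Borel function on the (measure-zero) zero set of $f$, so that $f = g|f|$ almost everywhere. Then $g(v)$ is unitary in $W^*(v)$, $w := ug(v)$ is a unitary in $R$ (direct check: $w^*w = g(v)^*g(v) = 1$ and $ww^* = 1$), and $x = w\cdot|f|(v)$ with $|x| = (x^*x)^{1/2} = |f|(v)$; this is the polar decomposition of $x$. The measure-zero hypothesis makes $|x|$ injective, so the polar decomposition places both factors inside $W^*(x)$: $|x| \in W^*(x^*x) \subseteq W^*(x)$ via Borel functional calculus, and $w = \mathrm{SOT}\text{-}\lim_n xq_n$, where $q_n \in W^*(|x|)$ are bounded Borel functions of $|x|$ approximating its generalized inverse. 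Hence $W^*(uf(v)) = W^*\bigl(ug(v), |f|(v)\bigr)$.

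Set $u_1 := ug(v)$. A one-line computation gives $vu_1 = e^{2\pi i\theta}u_1 v$, and $u = u_1 g(v)^* \in \{u_1,v\}''$ shows $\{u_1, v\}'' = R$. Thus $(u_1,v)$ is another pair of unitary generators of $R$ satisfying the standing hypotheses on $(u,v)$; the arguments of the preceding Lemma and of Theorem~\ref{T:Liu} depend only on these hypotheses, so they apply verbatim with $u$ replaced by $u_1$ and $f$ replaced by $|f|$ (which is not a scalar by assumption). We conclude that $W^*(uf(v)) = W^*\bigl(u_1,|f|(v)\bigr)$ is an irreducible subfactor of $R$ with finite integer index, as required. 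The main obstacle is the polar-decomposition step: one must check that the measure-zero condition forces $w$ to be unitary (not merely a partial isometry), and that $w = ug(v)$ genuinely lies in $W^*(x)$; once this bookkeeping is done, the corollary is a formal consequence of Theorem~\ref{T:Liu}, since absorbing the phase $g(v)$ of $f$ into the unitary $u_1$ reduces the complex function $f$ to the real-valued function $|f|$ at the cost of replacing the generator $u$ by $u_1$.
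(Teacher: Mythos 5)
Your proof is correct and takes essentially the same approach as the paper: both factor $uf(v)=\bigl(ug(v)\bigr)|f|(v)$ via the polar decomposition, recover $|f|(v)$ from $(uf(v))^*(uf(v))$ and the unitary $ug(v)$ inside $W^*(uf(v))$ using injectivity of $|f|(v)$, and then invoke Theorem~\ref{T:Liu} for the generating pair $\bigl(ug(v),v\bigr)$. Your SOT-limit argument with bounded Borel approximants $q_n$ of the generalized inverse is just a more explicit justification of the paper's step, where $|f|^{-1}(v)$ is treated as an unbounded operator affiliated with $W^*(uf(v))$ so that $uw=uf(v)\,|f|^{-1}(v)$ is bounded and hence lies in $W^*(uf(v))$.
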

\begin{proof}
Let $N$ be the von Neumann subalgebra generated by $uf(v)$ and let $f(v)=w|f|(v)$ be the polar decomposition of $f(v)$. Then $R=\{uw,v\}''$ and $vuw=e^{2\pi i\theta}uw v$. Note that $|f|^2(v)=(uf(v))^*(uf(v))\in N$. So $|f|(v)\in N$ and $|f|^{-1}(v)$ is an (unbounded) operator affiliated with $N$. Thus $uw=uw|f|(v)|f|^{-1}(v)$ is a bounded operator affiliated with $N$. Therefore, $uw\in N$. By Theorem~\ref{T:Liu}, $N=W^*(uw,|f|(v))$ is an irreducible subfactor of $R$ with index $n$ for some positive integer $n$.
\end{proof}

The above corollary shows that under a very general assumption $W^*(uf(v))$ is an irreducible subfactor of $R$. Recall that
an operator $x\in R$ is called a \emph{strongly irreducible operator relative to R} if there does not exist nontrivial idempotents $p$ in $\{x\}'\cap R$~\cite{FJX}. So an operator $x\in R$ is strongly irreducible relative to $R$ if and only if for every invertible operator $z\in R$, $W^*(zxz^{-1})$ is an irreducible subfactor of $R$.

\begin{Theorem}\label{T:strongly irreducible operators}
Suppose $f(z)\in C(S^1)$ such that the zero set of $f(z)$ has Lebesgue measure zero and is nonempty. Then $uf(v)$ is strongly irreducible relative to $R$.
\end{Theorem}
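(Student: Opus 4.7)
Suppose $p \in \{x\}' \cap R$ is an idempotent, where $x = uf(v)$. The plan is to show $p \in \{0,1\}$ by analyzing the Fourier expansion of $p$ in the crossed product $R \cong L^\infty(S^1) \rtimes_\alpha \mathbb{Z}$. Write $p = \sum_{n \in \mathbb{Z}} u^n g_n(v)$ with $g_n \in L^\infty(S^1)$ and $\|g_n\|_\infty \leq \|p\|$ (by Lemma~\ref{L:less than 1}). Equating coefficients in $xp = px$ yields, for each $n \in \mathbb{Z}$,
\[
f(\alpha^n z)\,g_n(z) = g_n(\alpha z)\,f(z) \quad \text{for a.e. } z \in S^1.
\]

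For $n \geq 0$, the product $G_n(z) := \prod_{j=0}^{n-1} f(\alpha^j z)$ satisfies the same identity by direct calculation. Since the zero set of $f$ has measure zero, so does the zero set of $G_n$, and consequently $g_n/G_n$ is $\alpha$-invariant on a set of full measure. Ergodicity of the irrational rotation then gives $g_n = C_n G_n$ for some constant $C_n \in \mathbb{C}$; in particular $g_0$ is a scalar.

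The main obstacle is ruling out the negative Fourier modes. For $n = -m < 0$, the analogous special solution of the cocycle equation is $H_{-m}(z) := \prod_{j=1}^m f(\alpha^{-j}z)^{-1}$, giving $g_{-m} = C_{-m} H_{-m}$ a.e. Here the hypotheses enter decisively: continuity of $f$ combined with the existence of a zero $z_0$ makes $|f(\alpha^{-1}z)|$ arbitrarily small on any neighborhood of $\alpha z_0$, so $H_{-m}$ is essentially unbounded. Since $g_{-m} \in L^\infty$, we must have $C_{-m} = 0$, and hence $g_n = 0$ for every $n < 0$. (Note that this step fails for general $f \in L^\infty(S^1)$, which is consistent with the Remark after Theorem~\ref{T:spectrum}.)

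Finally, using the identity $(uf(v))^n = u^n\prod_{j=0}^{n-1} f(\alpha^j v)$ from the start of Section~2, we obtain $u^n g_n(v) = C_n x^n$ for every $n \geq 0$. Comparing the $k$-th Fourier coefficients of $p^2$ and $p$, and using $G_n(\alpha^m z)\,G_m(z) = G_{n+m}(z)$, the idempotency $p^2 = p$ reduces (after cancelling $G_k$, which is nonzero a.e.) to the scalar recursion
\[
\sum_{n+m=k} C_n C_m = C_k \quad (k \geq 0).
\]
The case $k = 0$ forces $C_0 \in \{0,1\}$, and in either case a straightforward induction on $k$ yields $C_k = 0$ for all $k \geq 1$. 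Hence $p = C_0 \in \{0,1\}$, contradicting nontriviality, and $uf(v)$ is strongly irreducible relative to $R$.
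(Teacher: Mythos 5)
Your proof is correct and takes essentially the same route as the paper's: expand a commutant element in Fourier series $\sum_n u^n g_n(v)$, compare coefficients to get the cocycle relation $f(\alpha^n z)g_n(z)=g_n(\alpha z)f(z)$, use ergodicity of the irrational rotation to force $g_n$ proportional to the product solution, and use the essential unboundedness of the inverse products (coming from continuity of $f$ plus the nonempty zero set) to annihilate all negative modes. The only difference is that where the paper ends with ``if $y^2=y$ then clearly $y=0$ or $y=1$'' for $y=\sum_{n\geq 0}\lambda_n x^n$, you spell this out via the convolution recursion $\sum_{n+m=k}C_nC_m=C_k$, which is a welcome but inessential elaboration.
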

\begin{proof}
Let $x=uf(v)$ and $y=\sum_{n=-\infty}^\infty u^nf_n(v)\in \{x\}'\cap R$.  We have $f_0(e^{2\pi i\theta}v)f(v)=f(v)f_0(v)$ by comparing the coefficient of $u$. By the assumption of the theorem, $f_0(e^{2\pi i\theta}v)=f_0(v)$. By the ergodicity of irrational rotation, $f_0(v)=\lambda_0$ for some complex number $\lambda_0$. We have $f(e^{2(n-1)\pi i\theta}v)f_n(v)=f_n(e^{2\pi i\theta}v)f(v)$ by comparing the coefficient of $u^{n+1}$ for $n\geq 1$. Thus,
\[
\frac{f_n(e^{2\pi i\theta}v)}{f(e^{2\pi in\theta}v)f(e^{2\pi i(n-1)\theta}v)\cdots f(e^{2\pi in\theta}v)}=\frac{f_n(v)}{f(e^{2\pi i(n-1)\theta}v)f(e^{2\pi i(n-2)\theta}v)\cdots f(v)}.
\]
By the ergodicity of irrational rotation, $f_n(v)=\lambda_n f(e^{2\pi i(n-1)\theta}v)f(e^{2\pi i(n-2)\theta}v)\cdots f(v)$ for a complex number $\lambda_n$. So $u^nf_n(v)=\lambda_n x^n$.
We have $f_{-n}(e^{2\pi i\theta}v)f(v)=f(e^{-2\pi in\theta}v)f_{-n}(v)$ by comparing the coefficient of $u^{-(n-1)}$ for $n\geq 1$. Similarly, $f_{-n}(v)=\lambda_{-n}\left(f(e^{-2\pi in\theta}v)\cdots f(e^{-2\pi i\theta}v)\right)^{-1}$. By the assumption of the theorem, $\lambda_{-n}=0$. Thus $y=\sum_{n=0}^\infty \lambda_n x^n$. If $y^2=y$, then clearly $y=0$ or $y=1$. So $x$ is strongly irreducible relative to $R$.
\end{proof}

Now we have the following corollary, which is firstly proved in~\cite{FJX} by a different method.
\begin{Corollary}
$u+v$ is strongly irreducible relative to $R$.
\end{Corollary}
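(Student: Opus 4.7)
The plan is to reduce the corollary to a direct application of Theorem~\ref{T:strongly irreducible operators} via the substitution $w=u^*v$ already noted in the introduction.

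First I would set $w=u^*v$ and record the two facts that make this substitution useful. Since $u$ and $v$ are unitaries, $w$ is a unitary, and a short computation gives
\[
wu = u^*vu = u^*(e^{2\pi i\theta}uv)u = e^{2\pi i\theta}vu = e^{2\pi i\theta}uw,
\]
(using $vu=e^{2\pi i\theta}uv$ twice, or just once after rearranging); also $R=\{u,v\}''=\{u,w\}''$ because $v=uw$. Thus the pair $(u,w)$ satisfies exactly the same hypotheses on the two generators as the pair $(u,v)$, so Theorem~\ref{T:strongly irreducible operators} applies with $v$ replaced by $w$.

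Next I would write $u+v$ in the required form. Since $v=uw$, we have
\[
u+v = u+uw = u(1+w) = uf(w),\qquad f(z)=1+z.
\]
The function $f(z)=1+z$ is continuous on $S^1$, and its zero set is the single point $\{-1\}$, which is nonempty and has Lebesgue measure zero. Hence the hypotheses of Theorem~\ref{T:strongly irreducible operators} are met, and $uf(w)=u+v$ is strongly irreducible relative to $R$.

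There is no real obstacle here; the content of the corollary is entirely contained in Theorem~\ref{T:strongly irreducible operators} once one performs the substitution $w=u^*v$. The only thing worth checking carefully is that the commutation relation and the generation property of $R$ are preserved by this change of generators, which is immediate.
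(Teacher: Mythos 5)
Your proposal is correct and is exactly the paper's intended argument: the substitution $w=u^*v$ with $u+v=u(1+w)$ is the reduction stated in the paper's introduction, and the corollary then follows from Theorem~\ref{T:strongly irreducible operators} applied to $f(z)=1+z$, whose zero set $\{-1\}$ is nonempty and Lebesgue-null. One small algebra slip in your displayed chain: it should read $wu=u^*vu=u^*\left(e^{2\pi i\theta}uv\right)=e^{2\pi i\theta}v=e^{2\pi i\theta}uw$ (your middle term carries a spurious trailing $u$), though your final relation $wu=e^{2\pi i\theta}uw$ is the correct one.
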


\section{Invariant subspaces of $uf(v)$ relative to $R$}

The following theorem is  Theorem 2.2 of~\cite{HS2}.
\begin{Theorem}\label{T:HS}
Let $T\in M$, and for $n\in \mathbb{N}$, let $\mu_n\in
\text{Prob}([0,\infty))$ denote the distribution of $(T^n)^*T^n$
w.r.t $\tau$, and let $\nu_n$ denote the push-forward measure of
$\mu_n$ under the map $t\rightarrow t^{\frac{1}{n}}$. Moreover,
let $\nu$ denote the push-forward measure of $\mu_T$ under the map
$z\rightarrow |z|^2$, i.e., $\nu$ is determined by
\[
\nu([0,t^2])=\mu_T(\overline{{\mathbb B}(0,t)}),\quad t>0.
\]
Then $\nu_n\rightarrow \nu$ weakly in $\text{Prob}([0,\infty))$.
\end{Theorem}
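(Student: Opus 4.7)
My plan is to prove weak convergence by reducing to pointwise convergence of cumulative distribution functions at continuity points of the limit. Since $\|T^n\|\le\|T\|^n$, both $\nu_n$ and $\nu$ are supported in the compact interval $[0,\|T\|^2]$, so by the Portmanteau theorem the claim reduces to showing that, at each continuity point $r>0$ of $F(r):=\mu_T(\overline{\mathbb{B}(0,\sqrt{r})})$,
\[
F_n(r):=\tau\!\left(E_{[0,r]}(|T^n|^{2/n})\right)\longrightarrow F(r).
\]

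The first nontrivial step is the exact log--moment identity
\[
\int_0^{\infty}\log t\,d\nu_n(t)=\frac{2}{n}\log\Delta(T^n)=2\log\Delta(T)=\int_0^{\infty}\log t\,d\nu(t),
\]
which holds for every $n$ by Fuglede--Kadison multiplicativity and pins down the logarithmic potentials of all $\nu_n$ at the origin. To lift this into convergence of the full distributions, I would compare $\tau\!\bigl(\log(|T^n|^{2/n}+s^2)\bigr)$ with $\frac{1}{n}\log\Delta((T^n)^*T^n+s^{2n})$ for $s>0$, using the spectral--calculus sandwich $\tfrac12(a^{1/n}+s^2)\le(a+s^{2n})^{1/n}\le a^{1/n}+s^2$ (valid for $a\ge 0$) in the functional calculus of $|T^n|^2$. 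The quantity $\frac{1}{n}\log\Delta((T^n)^*T^n+s^{2n})$ can then be analyzed through the factorization $T^n-\lambda^n=\prod_{k=0}^{n-1}(T-\omega^k\lambda)$, $\omega=e^{2\pi i/n}$, which gives $\frac{1}{n}\log\Delta(T^n-\lambda^n)=\frac{1}{n}\sum_{k=0}^{n-1}\log\Delta(T-\omega^k\lambda)$. This converts the sum into a Birkhoff--type average of the subharmonic potential $\lambda\mapsto\log\Delta(T-\lambda)$ along the orbit of rotation by $\omega$; since the potential is continuous outside a polar set, the limit as $n\to\infty$ is the uniform radial average of that potential on each circle, which encodes the radial profile of $\mu_T$.

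The main obstacle will be translating this convergence of radial averages of subharmonic potentials into weak convergence of the one--dimensional push--forwards $\nu_n\to\nu$: one must extract the radial measure from its logarithmic transform while controlling mass near the threshold $r$. The continuity of $F$ at $r$ is exactly what is needed --- it rules out an atom of the radial profile of $\mu_T$ at $\sqrt{r}$, which is the only mechanism by which mass could fail to pass to the limit. Once the sandwich error and the Birkhoff averaging error are both shown to be $o(1)$ (the former uniformly in $s$ on any compact set bounded away from $0$, the latter on the support of $\mu_T$), weak convergence follows by the standard correspondence between convergence of logarithmic potentials and weak convergence of probability measures on a compact real interval; an alternative, technically parallel route would invoke Haagerup and Schultz's invariant--subspace projections $P_r$ (the principal construction of the paper being quoted) to reduce the problem block--by--block to the Brown--measure support on the two complementary discs.
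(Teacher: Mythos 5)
The first thing to say is that the paper contains no proof of this statement to compare against: it is quoted verbatim as Theorem 2.2 of Haagerup--Schultz \cite{HS2}, a deep result whose proof occupies a substantial portion of that paper. So the only question is whether your sketch could stand on its own as a proof of the Haagerup--Schultz theorem, and as written it cannot. Your correct ingredients are the reduction to a compact interval and Portmanteau, the log-moment identity $\int_0^\infty\log t\,d\nu_n(t)=\tfrac{2}{n}\log\Delta(T^n)=2\log\Delta(T)=\int_0^\infty\log t\,d\nu(t)$ (valid by Fuglede--Kadison multiplicativity and $\log\Delta(T)=\int\log|z|\,d\mu_T(z)$), and the factorization $T^n-\lambda^n=\prod_{k=0}^{n-1}(T-\omega^k\lambda)$, which yields exactly the (true, standard) fact that $\mu_{T^n}$ is the push-forward of $\mu_T$ under $z\mapsto z^n$. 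The fatal step is the claim that $\frac1n\log\Delta((T^n)^*T^n+s^{2n})$ "can be analyzed through" that factorization: the operator $(T^n)^*T^n+s^{2n}$ is not of the form $(T^n-\mu)^*(T^n-\mu)$ for any scalar $\mu$, so no choice of $\lambda$ connects it to the factorization. More fundamentally, $\tau\bigl(\log(A^*A+s^2)\bigr)$ is a datum of the spectral distribution of $|A|$, and for non-normal $A$ this is \emph{not} a function of the Brown measure of $A$; the family $\lambda\mapsto\Delta(A-\lambda)$ determines $\mu_A$ and nothing more. Since $\nu_n$ is by definition the distribution of $|T^n|^{2/n}$, everything extractable from your factorization --- i.e.\ from $\mu_{T^n}=(z\mapsto z^n)_*\mu_T$ --- gives no control over $\nu_n$ beyond the single log-moment already noted, and one point of a logarithmic potential is nowhere near enough to identify a measure. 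Proving that the singular-value data of $T^n$ asymptotically agrees with the radial Brown data is precisely the content of the theorem, not a consequence of push-forward identities.

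Two secondary problems confirm that the plan cannot be patched along the proposed lines. First, the sandwich $\tfrac12(a^{1/n}+s^2)\le(a+s^{2n})^{1/n}\le a^{1/n}+s^2$ is correct but its error is \emph{not} $o(1)$ after taking logarithms: at $a^{1/n}=s^2$ one has $(a+s^{2n})^{1/n}=2^{1/n}s^2$ against $a^{1/n}+s^2=2s^2$, an additive discrepancy of $\log 2-\tfrac{\log 2}{n}$ in the log, concentrated exactly in the spectral region $a^{1/n}\approx s^2$ that governs the mass near the threshold you are trying to resolve; dismissing it would require precisely the non-atomicity control you have not established. Second, soft determinant arguments only give one side: operator monotonicity of $\log$ on positive operators yields $\Delta\bigl((T^n)^*T^n+s^{2n}\bigr)\ge\max\bigl(\Delta(T)^{2n},s^{2n}\bigr)$, but the matching asymptotic upper bound is essentially equivalent to the theorem itself. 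Finally, the proposed fallback --- invoking the Haagerup--Schultz projections $P_r$ --- is circular: in \cite{HS2} the weak convergence $\nu_n\to\nu$ is one of the foundational steps on which the construction of those spectral projections rests, so it cannot be used as input here.
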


\begin{Lemma}\label{L:Birkhoff}
If $f(z)\in L^\infty(S^1)$, then for almost all $z\in S^1$,
\[
\lim_{n\rightarrow\infty}\left|\prod_{k=0}^{n-1}|f|(\alpha^kz)\right|^{\frac{1}{n}}=\Delta(f(v)).
\]
\end{Lemma}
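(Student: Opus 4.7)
The plan is to reduce the claim to the pointwise Birkhoff Ergodic Theorem applied to the irrational rotation $T:z\mapsto\alpha z$ on the probability space $(S^1,m)$, which is measure-preserving and ergodic (and in fact uniquely ergodic, as already used in the paper). Taking the logarithm of both sides, the statement is equivalent to the almost-everywhere convergence
\[
\frac{1}{n}\sum_{k=0}^{n-1}\ln|f(\alpha^k z)| \;\longrightarrow\; \int_0^1 \ln|f(e^{2\pi ix})|\, dx,
\]
where the right-hand side is permitted to equal $-\infty$. Since $f\in L^\infty(S^1,m)$, the positive part $(\ln|f|)^+$ is bounded and therefore integrable; the negative part need not be, so one must split into cases.

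If $\int_0^1 \ln|f(e^{2\pi ix})|\,dx>-\infty$, then $\ln|f|\in L^1(S^1,m)$ and Birkhoff's theorem for the ergodic transformation $T$ yields the desired convergence immediately, giving $\bigl(\prod_{k=0}^{n-1}|f|(\alpha^k z)\bigr)^{1/n}\to\Delta(f(v))$ a.e. (This is essentially the argument already carried out in Lemma~\ref{L:leq}.)

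If $\int_0^1 \ln|f(e^{2\pi ix})|\,dx=-\infty$, then $\Delta(f(v))=0$ and the task is to show the Birkhoff averages diverge to $-\infty$ almost everywhere. For each $M\in\mathbb{N}$ I introduce the bounded (hence $L^1$) truncation $g_M(z):=\max(\ln|f(z)|,-M)$. Birkhoff applied to $g_M$ yields a null set $N_M\subset S^1$ such that, for $z\notin N_M$,
\[
\frac{1}{n}\sum_{k=0}^{n-1} g_M(\alpha^k z) \;\longrightarrow\; \int_0^1 g_M(e^{2\pi ix})\, dx.
\]
Setting $N:=\bigcup_{M\geq 1} N_M$, which is still null, and using $\ln|f|\leq g_M$ pointwise, I obtain for every $z\notin N$ and every $M$,
\[
\limsup_{n\to\infty}\frac{1}{n}\sum_{k=0}^{n-1} \ln|f(\alpha^k z)| \;\leq\; \int_0^1 g_M(e^{2\pi ix})\, dx.
\]
The monotone convergence theorem applied to the decreasing sequence $-g_M\downarrow -\ln|f|$ shows the right-hand side decreases to $\int_0^1 \ln|f(e^{2\pi ix})|\,dx=-\infty$, forcing the Birkhoff averages to tend to $-\infty$ off $N$ and hence $\bigl(\prod_{k=0}^{n-1}|f|(\alpha^k z)\bigr)^{1/n}\to 0=\Delta(f(v))$.

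The only real subtlety is the possibility that $\ln|f|\notin L^1$; the truncation-plus-monotone-convergence device above is the standard remedy and, once it is in place, ergodicity of the irrational rotation does all the work. No deeper obstacle appears, and the hyperfinite $\mathrm{II}_1$ factor structure plays no role here beyond identifying $v$ with the multiplication operator $M_z$ on $L^2(S^1,m)$.
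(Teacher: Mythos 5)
Your proof is correct and follows essentially the same route as the paper's: the finite case $\int_0^1\ln|f(e^{2\pi ix})|\,dx>-\infty$ is Birkhoff's theorem applied directly, and the case of integral $-\infty$ is handled by truncation from below together with monotone convergence, your $g_M=\max(\ln|f|,-M)$ being exactly the logarithm of the paper's truncation $f_m=\max\{|f|,\tfrac{1}{m}\}$. The only cosmetic difference is that the paper delegates the monotone-convergence computation $\Delta(f_m(v))\to\Delta(f(v))=0$ to the proof of Theorem~\ref{T:spectral radius}, whereas you carry it out inline.
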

\begin{proof}
 If $\int_0^1\ln |f(e^{2\pi ix})|dx>-\infty$,
then $\int_0^1\left|\left(\ln |f(e^{2\pi
ix})|\right)\right|dx<\infty$. By Birkhoff's ergodic theorem, the lemma holds. If $\int_0^1\ln |f(e^{2\pi ix})|dx=-\infty$,
then $\Delta(f(v))=0$. We may assume that $|f(z)|\leq 1$ for all $z\in S^1$. For $m\in
\mathbb{N}$, define $f_m(z)=\max\{|f(z)|,\frac{1}{m}\}$. For each $m$ and every $z$,
\[
\limsup_{n\rightarrow\infty}\left|\prod_{k=0}^{n-1}|f|(\alpha^kz)\right|^{\frac{1}{n}}\leq \lim_{n\rightarrow\infty}\left|\prod_{k=0}^{n-1}f_m(\alpha^kz)\right|^{\frac{1}{n}},
\]
hence for almost all $z\in S^1$,
\[
\limsup_{n\rightarrow\infty}\left|\prod_{k=0}^{n-1}|f|(\alpha^kz)\right|^{\frac{1}{n}}\leq \Delta(f_m(v)).
\]
By the proof of Theorem~\ref{T:spectral radius}, $\lim_{m\rightarrow\infty}\Delta(f_m(v))=\Delta(f(v))=0$. So for almost all $z\in S^1$,
\[
\limsup_{n\rightarrow\infty}\left|\prod_{k=0}^{n-1}|f|(\alpha^kz)\right|^{\frac{1}{n}}\leq 0.
\]
This implies the lemma.
\end{proof}

\begin{Theorem}\label{T:Brown measure}
If $f(z)\in L^\infty(S^1)$, then the Brown measure of $uf(v)$ is
the Haar measure on the circle $\Delta(f(v))S^1$.
\end{Theorem}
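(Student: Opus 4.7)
The plan is to deduce the theorem from the Haagerup--Schultz limit theorem (Theorem~\ref{T:HS}) by computing the push-forward measure $\nu$ explicitly, and then upgrading the resulting radial information to full rotational invariance.

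First I would write $T=uf(v)$ and use the identity $T^n=u^nf(\alpha^{n-1}v)\cdots f(v)$ from Section~2 to get
\[
(T^n)^*T^n \;=\; \prod_{k=0}^{n-1}|f|^2(\alpha^k v),
\]
which lies in the MASA $W^*(v)$. Identifying $W^*(v)$ with $L^\infty(S^1,m)$ under $v\leftrightarrow M_z$ (so that $\tau$ restricts to integration against $m$), the distribution $\mu_n$ of $(T^n)^*T^n$ becomes the push-forward of $m$ by $z\mapsto\prod_{k=0}^{n-1}|f|^2(\alpha^kz)$, and consequently $\nu_n$ is the distribution of the bounded random variable
\[
Y_n(z)\;:=\;\prod_{k=0}^{n-1}|f|^{2/n}(\alpha^k z)
\]
on $(S^1,m)$. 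Lemma~\ref{L:Birkhoff}, applied to $|f|$ and squared, yields $Y_n(z)\to\Delta(f(v))^2$ for $m$-a.e.\ $z$; since $Y_n\le\|f\|_\infty^2$ uniformly, bounded a.s.\ convergence to a constant forces weak convergence $\nu_n\to\delta_{\Delta(f(v))^2}$. Combined with Theorem~\ref{T:HS}, this identifies $\nu=\delta_{\Delta(f(v))^2}$, so the Brown measure $\mu_T$ is supported on the circle $\{|z|=\Delta(f(v))\}$ (a single point $\{0\}$ when $\Delta(f(v))=0$, consistent with interpreting Haar measure on $0\cdot S^1$ as $\delta_0$).

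To finish I would establish that $\mu_T$ is invariant under all rotations of $\mathbb{C}$, which together with the previous paragraph identifies it with normalized arc length on $\Delta(f(v))S^1$. For each $\phi\in\mathbb{R}$ the pair $(e^{i\phi}u,v)$ satisfies exactly the same relations $v(e^{i\phi}u)=e^{2\pi i\theta}(e^{i\phi}u)v$ as $(u,v)$; because $\theta$ is irrational the universal irrational rotation $C^*$-algebra carries a unique tracial state, and its GNS completion is $R$. Therefore there is a $\tau$-preserving $*$-automorphism $\pi_\phi$ of $R$ with $\pi_\phi(u)=e^{i\phi}u$ and $\pi_\phi(v)=v$. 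Applying $\pi_\phi$ to $T$ gives $\pi_\phi(T)=e^{i\phi}T$, so the two operators share all $*$-moments and hence the same Brown measure. On the other hand $\mu_{e^{i\phi}T}$ is manifestly the push-forward of $\mu_T$ under $z\mapsto e^{i\phi}z$, so $\mu_T$ is rotation invariant, completing the proof.

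The Birkhoff/Haagerup--Schultz half is essentially a direct computation once the formula for $(T^n)^*T^n$ is written down, so the step I expect to demand the most care is producing the automorphism $\pi_\phi$; it rests on the uniqueness of the trace on $A_\theta$ (or, equivalently, on realising $R$ as the crossed product $L^\infty(S^1)\rtimes_\theta\mathbb{Z}$ and invoking its gauge action), and one needs to cite this cleanly rather than reconstruct it in the body of the proof.
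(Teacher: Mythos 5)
Your proof is correct and takes essentially the same route as the paper's: the identical computation $(T^n)^*T^n=\prod_{k=0}^{n-1}|f|^2(\alpha^kv)$ viewed as a multiplication operator, Lemma~\ref{L:Birkhoff} plus bounded a.e.\ convergence to get $\nu_n\to\delta_{\Delta(f(v))^2}$, Theorem~\ref{T:HS} to locate $\mu_T$ on $\Delta(f(v))S^1$, and rotation invariance to conclude. The only difference is that you actually justify the rotation invariance of $\mu_T$ (via the $\tau$-preserving automorphism $u\mapsto e^{i\phi}u$, $v\mapsto v$ obtained from universality and the uniqueness of the trace on the irrational rotation algebra, together with the fact that Brown measure depends only on $*$-moments), a step the paper asserts without proof, so your write-up is a correct and slightly more complete version of the same argument.
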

\begin{proof}
 Let $T=uf(v)$, and let $\nu$ and $\nu_n$ be the
measures defined as in Theorem~\ref{T:HS}. Then $\nu_n$ converges weakly to $\nu$. On the other hand,
$((T^n)^*T^n)^{\frac{1}{n}}=|f(v)\cdots
f(\alpha^{n-1}v)|^{\frac{2}{n}}$, where $\alpha=e^{2\pi i\theta}$.
So we can view $((T^n)^*T^n)^{\frac{1}{n}}$ as the multiplication
operator on $L^2[0,1]$ corresponding to the function$
\left|\prod_{k=0}^{n-1}\left(|f|^2(\alpha^kz)\right)\right|^{\frac{1}{n}}.$
By  Lemma~\ref{L:Birkhoff},  for almost all $z\in S^1$,
\[
\lim_{n\rightarrow\infty}\left|\prod_{k=0}^{n-1}\left(|f|^2(\alpha^kz)\right)\right|^{\frac{1}{n}}=\Delta(f(v))^2.
\]
Thus $\nu_n$ converges weakly to the Dirac measure $\delta_{\Delta(f(v))^2}$ in ${\rm Prob}([0,\infty))$.
Therefore, $\nu$ is the Dirac
measure $\delta_{\Delta(f(v))^2}$ and the support of $\mu_T$ is
contained in $\Delta(f(v))S^1$. Since $\mu_T$ is rotation
invariant, $\mu_T$ is the Haar measure on $\Delta(f(v))S^1$.
\end{proof}

In~\cite{Bo}, the spectrum and Brown measure of $u+\lambda v$ are calculated.
As an application of Theorem~\ref{T:spectrum} and Theorem~\ref{T:Brown measure}, we give another
 method to calculate the spectrum and Brown spectrum of $u+\lambda v$.
 Note that our method is very different from the method used in~\cite{Bo}, which uses analytical function theory
to calculate the spectrum  and Brown spectrum of $u+\lambda v$.

\begin{Corollary}
The spectrum of $u+\lambda v$ is
\[
\sigma(u+\lambda v)=
\begin{cases}
S^1&|\lambda<1|;\\
\overline{\mathbb{B}(0,1)}& |\lambda|=1;\\
\lambda S^1& |\lambda|>1,
\end{cases}
\]
and the Brown spectrum of $u+\lambda v$ is the Haar measure on $\lambda S^1$.
\end{Corollary}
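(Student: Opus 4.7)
The plan is to reduce $u+\lambda v$ to the canonical form $uf(w)$ so that Theorem~\ref{T:spectrum} and Theorem~\ref{T:Brown measure} apply directly. As already observed in the introduction, setting $w=u^*v$ yields a Haar unitary satisfying $wu=e^{2\pi i\theta}uw$ and $\{u,w\}''=R$. Hence
\[
u+\lambda v \;=\; u(1+\lambda w)\;=\;uf(w),\qquad f(z)=1+\lambda z\in C(S^1),
\]
and everything reduces to computing $\Delta(f(w))$ and deciding when $f(w)$ is invertible.

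The only real calculation is the Fuglede--Kadison determinant. Since the spectral distribution of the Haar unitary $w$ is Lebesgue measure on $S^1$,
\[
\Delta(f(w))=\exp\!\left(\int_0^1\ln|1+\lambda e^{2\pi ix}|\,dx\right),
\]
and a standard application of Jensen's formula (equivalently, a direct Fourier-coefficient calculation for $\log(1+\lambda z)$) gives
\[
\int_0^1\ln|1+\lambda e^{2\pi ix}|\,dx=\log\max\{1,|\lambda|\},
\]
so $\Delta(f(w))=\max\{1,|\lambda|\}$.

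Next I would settle the invertibility of $f(w)=1+\lambda w$. Because $w$ is a Haar unitary with spectrum $S^1$, the operator $f(w)$ is invertible precisely when the polynomial $1+\lambda z$ has no zero on $S^1$, that is, when $|\lambda|\neq 1$. Substituting into Theorem~\ref{T:spectrum} produces the three cases: $\sigma(uf(w))=\Delta(f(w))S^1=S^1$ when $|\lambda|<1$; $\sigma(uf(w))=\overline{\mathbb{B}(0,\Delta(f(w)))}=\overline{\mathbb{B}(0,1)}$ when $|\lambda|=1$; and $\sigma(uf(w))=\Delta(f(w))S^1=|\lambda|S^1=\lambda S^1$ when $|\lambda|>1$. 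Theorem~\ref{T:Brown measure} then delivers the Brown measure as the Haar measure on $\Delta(f(w))S^1$, matching the claim.

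There is no serious obstacle; the only step worth double-checking is the Jensen integral, which is classical and can be verified either by applying the mean value property to the harmonic function $\log|1+\lambda z|$ on whichever side of $S^1$ keeps it harmonic, or by reading off the zeroth Fourier coefficient of $\log(1+\lambda z)$ (which vanishes for $|\lambda|<1$ and equals $\log\lambda$ for $|\lambda|>1$).
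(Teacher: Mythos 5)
Your proposal is correct and is exactly the route the paper intends: it leaves the corollary without a written proof, but the reduction $u+\lambda v=u(1+\lambda w)$ with $w=u^*v$ is already set up in the introduction, and the corollary is stated as a direct application of Theorem~\ref{T:spectrum} and Theorem~\ref{T:Brown measure} with $\Delta(1+\lambda w)=\max\{1,|\lambda|\}$ from Jensen's formula, just as you compute (noting only that for $|\lambda|>1$ the zeroth coefficient of $\log|1+\lambda e^{2\pi ix}|$ is $\log|\lambda|$, and $\lambda S^1=|\lambda|S^1$). No gaps; your filled-in determinant and invertibility checks are precisely the details the paper omits.
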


Combining with the main result of~\cite{HS2} and Theorem~\ref{T:Brown measure}, we have the following
\begin{Corollary}
If $\Delta(f(v))>0$, then $uf(v)$ has a continuous family of hyperinvariant subspace affiliated with $R$. In particular, if $f$ is a
polynomial then $\Delta(f(v))>0$.
\end{Corollary}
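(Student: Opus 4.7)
The plan is to combine Theorem~\ref{T:Brown measure} with the theorem of Haagerup--Schultz mentioned in the introduction (\cite{HS2}). By Theorem~\ref{T:Brown measure}, the Brown measure $\mu_{uf(v)}$ is the normalized Haar measure on the circle $\Delta(f(v))S^1$. Under the hypothesis $\Delta(f(v))>0$ this circle has positive radius, so $\mu_{uf(v)}$ is supported on a genuine circle and in particular is not concentrated at a single point. Thus the Haagerup--Schultz theorem applies.

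More precisely, Haagerup and Schultz associate to $T=uf(v)$ a family of projections $\{p_T(B):B\subseteq\mathbb{C}\text{ Borel}\}$ in $R$, each of which is $T$-hyperinvariant and satisfies $\tau(p_T(B))=\mu_T(B)$. To produce a continuous family, I would take arcs
\[
B_s=\{\Delta(f(v))e^{2\pi it}:t\in[0,s]\},\qquad s\in(0,1),
\]
and set $\K_s=p_T(B_s)\H$. Since the Haar measure on $\Delta(f(v))S^1$ assigns $\mu_T(B_s)=s$, the projections $p_T(B_s)$ have pairwise distinct traces and are hyperinvariant for $uf(v)$. This gives the required continuous one-parameter family of non-trivial hyperinvariant subspaces affiliated with $R$.

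For the second assertion, suppose $f(z)=c\prod_{j=1}^{n}(z-z_j)$ is a nonzero polynomial. Applying Jensen's formula (or directly using the fact that $\int_0^1\ln|e^{2\pi ix}-z_0|\,dx=\ln^+|z_0|$ is finite for every $z_0\in\mathbb{C}$, including $|z_0|=1$), I obtain
\[
\int_0^1\ln|f(e^{2\pi ix})|\,dx=\ln|c|+\sum_{j=1}^{n}\ln^+|z_j|>-\infty,
\]
so $\Delta(f(v))=\exp\bigl(\int_0^1\ln|f(e^{2\pi ix})|\,dx\bigr)>0$, and the first part applies.

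The only real subtlety is the invocation of the Haagerup--Schultz machinery: one must know that their projections $p_T(B)$ exist for arbitrary rotation-invariant Brown measures and that arcs of the supporting circle give distinct, non-trivial hyperinvariant projections. Since this is a direct quotation of the main result of \cite{HS2}, no additional work is required beyond citing it; the substantive computations (the spectral radius, the spectrum, and the Brown measure of $uf(v)$) have all been done in the preceding sections.
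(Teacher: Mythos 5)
Your proposal is correct and takes essentially the same route as the paper: the first assertion is obtained, exactly as the paper intends, by combining Theorem~\ref{T:Brown measure} with the Haagerup--Schultz construction from \cite{HS2}, and your Jensen's-formula computation $\int_0^1\ln|e^{2\pi ix}-z_0|\,dx=\ln^+|z_0|$ is the same calculation the paper performs, merely phrased there via multiplicativity of the Fuglede--Kadison determinant together with $\Delta(v-z_i)=\Delta(v-1)=1$ for roots on the circle.
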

\begin{proof}
Suppose $f(z)$ is a polynomial. Then $f(z)=\alpha (z-z_1)\cdots(z-z_n)$. So $\Delta(f(v))=|\alpha| \Delta(v-z_1)\cdots \Delta(v-z_n)$. If $|z_i|\neq 1$, then $v-z_i$ is an invertible operator. Therefore, $\Delta(v-z_i)>0$. If $|z_i|=1$, then $\Delta(v-z_i)=\Delta(v-1)=1$. Thus $\Delta(f(v))>0$.
\end{proof}

On the other hand, there are $f\in C(S^1)$ such that $\Delta(f(v))=0$. For example, for $p\geq 1$, let
\[
g(x)=\begin{cases}
0& x=0;\\
\exp\left(-\frac{1}{x^p}\right)&0<x\leq \frac{1}{2};\\
\exp\left(-\frac{1}{(1-x)^p}\right)&\frac{1}{2}\leq x<1;\\
0& x=1.
\end{cases}
\]
Then $g(x)$ is a continuous function on $[0,1]$ and $g(x)=g(1-x)$ for $x\in[0,1]$. Therefore, there exists a continuous function $f(z)$ on $S^1$ with
a single zero point such that $f\left(e^{2\pi ix}\right)=g(x)$. Now
\[
\Delta(f(v))=\exp\left(\int_0^1 \ln f\left(e^{2\pi ix}\right)dx\right)=0.
\]
In this case the Brown measure of $uf(v)$ is the Dirac measure. So the main result of~\cite{HS2} does not apply to this case. In the following we will show that indeed the well-known methods can not determine whether $uf(v)$ has a nontrivial invariant subspace affiliated with $R$ in this case.

Recall that if $M$ is a von Neumann algebra acting on a Hilbert space $\mathcal{H}$ and $T\in M$. A Haagerup's invariant subspace of $T$ is defined by
\cite{DH2,Ga}
\[
\mathcal{E}_r(T):=\left\{\xi\in\mathcal{H}: \limsup_n \gamma_n\|T^n(\xi)\|^{1/n}\leq r \right\}\quad\text{and}\quad \mathcal{H}_r(T)=\overline{\mathcal{E}_r(T)}.
\]
This subspace is closed, $T$-invariant, affiliated to $M$ and moreover, hyperinvariant. However, we will prove that for any sequence $\{\gamma_n\}_n$ and for any $r>0$ this subspace is trivial for $uf(v)$.

\begin{Proposition}
Let $r>0$ and $\{\gamma_n\}_n$ be a sequence of positive numbers. The subspace $\mathcal{H}_r(uf(v))$ defined as above is either $\mathcal{H}$ or $\{0\}$ if the zero points of $f(z)$ has Lebesgue measure zero and $|f|(z)$ is not a scalar operator.
\end{Proposition}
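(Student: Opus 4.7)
The plan is to identify the projection $q=P_{\mathcal{H}_r(T)}$, where $T=uf(v)$, as a spectral projection $\chi_E(v)$ of $v$ for some Borel set $E\subseteq S^1$, and then to use the $T$-invariance of $\mathcal{H}_r(T)$ together with the ergodicity of the irrational rotation $z\mapsto\alpha z$ to force $m(E)\in\{0,1\}$, so that $q\in\{0,I\}$.

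The first step is to record three invariance properties of $\mathcal{E}_r(T)$. First, $T\mathcal{E}_r\subseteq\mathcal{E}_r$, because $\|T^{n+1}\xi\|^{1/n}\le\|T\|^{1/n}\|T^n\xi\|^{1/n}$ and $\|T\|^{1/n}\to 1$. Second, every $S\in R'$ commutes with $T$, so $\|T^n(S\xi)\|\le\|S\|\,\|T^n\xi\|$, giving $S\mathcal{E}_r\subseteq\mathcal{E}_r$; this is just the hyperinvariance built into Haagerup's definition. Third, $W^*(v)$ is abelian, so $T^n g(v)=u^n F_n(v)g(v)=u^n g(v)F_n(v)$ for every $g\in L^\infty(S^1,m)$, whence $\|T^n g(v)\xi\|\le\|g\|_\infty\|T^n\xi\|$ and $g(v)\mathcal{E}_r\subseteq\mathcal{E}_r$. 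In each case, raising the multiplicative prefactor ($\|T\|$, $\|S\|$, $\|g\|_\infty$) to the $1/n$-th power yields a sequence tending to $1$, so the defining inequality $\limsup_n\gamma_n\|T^n\xi\|^{1/n}\le r$ is preserved.

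All three invariances pass to $\mathcal{H}_r(T)$ and hence to the projection $q$. Because $R'$ is a $*$-algebra, the $R'$-invariance of $\mathcal{H}_r(T)$ (equivalently, of $\mathcal{H}_r(T)^\perp$) yields that $q$ commutes with $R'$, so $q\in R''=R$. Similarly, because $W^*(v)$ is a $*$-algebra, the $W^*(v)$-invariance of $\mathcal{H}_r(T)$ forces $q$ to commute with $W^*(v)$; an argument identical to the one already used in the paper for $W^*(u)$ (exploiting the irrationality of $\theta$) shows that $W^*(v)$ is maximal abelian in $R$, so $q\in W^*(v)$. Writing $q=\chi_E(v)$ for some Borel $E\subseteq S^1$ puts us in position for the final step.

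The hard part is to translate the $T$-invariance $(I-q)Tq=0$ into a measure-theoretic condition on $E$. Using $u h(v)=h(\alpha^{-1}v)u$ for every $h\in L^\infty$, one identifies
\[
uf(v)\chi_E(v)=f(\alpha^{-1}v)\,\chi_{\alpha E}(v)\,u,
\]
and hence $(I-q)Tq=f(\alpha^{-1}v)\,\chi_{E^c\cap\alpha E}(v)\,u$. Because the zero set of $f$ has Lebesgue measure zero, $f(\alpha^{-1}z)\ne 0$ for almost every $z$; since $u$ is unitary, the vanishing of this expression forces $m(E^c\cap\alpha E)=0$, and the measure preservation $m(\alpha E)=m(E)$ then gives $\alpha E=E$ modulo null sets. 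Ergodicity of the irrational rotation yields $m(E)\in\{0,1\}$, so $q\in\{0,I\}$ and $\mathcal{H}_r(T)\in\{\{0\},\mathcal{H}\}$.
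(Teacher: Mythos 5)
Your proof is correct, and it takes a genuinely different route from the paper's. The paper never localizes $q=P_{\mathcal{H}_r}$ in a masa: instead it shows $\mathcal{H}_r$ is invariant under $(uf(v))^*$ as well (first reducing to $f(v)\geq 0$ via the polar decomposition $f(v)=w|f|(v)$ and replacing $u$ by $uw$, then estimating $\|(uf(v))^n(uf(v))^*\xi\|\leq \|f^2(\alpha^{-2}v)\|\,\|f(\alpha^{n-2}v)\cdots f(v)\xi\|$), so that $q$ commutes with $W^*(uf(v))$, and then invokes Corollary~\ref{C:Liu}: since the zero set of $f$ is null and $|f|(v)$ is non-scalar, $W^*(uf(v))$ is an irreducible subfactor of $R$, forcing $q\in\{0,1\}$. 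You instead observe that $\mathcal{E}_r$ is $W^*(v)$-invariant (because $T^ng(v)=u^ng(v)F_n(v)$ with $u^n$ unitary), place $q$ in $W^*(v)'\cap R=W^*(v)$, write $q=\chi_E(v)$, and reduce $(1-q)Tq=0$ to $m(E^c\cap\alpha E)=0$, finishing by ergodicity of the rotation. Your route is more elementary and self-contained: it bypasses the subfactor machinery of Theorem~\ref{T:Liu} and Corollary~\ref{C:Liu}, never touches $T^*$, and does not use the hypothesis that $|f|(v)$ is non-scalar, so it proves a slightly stronger statement; the paper's route, by contrast, reuses already-established structure and yields the extra information that $\mathcal{H}_r$ reduces $W^*(uf(v))$. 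In a final write-up you should spell out the two facts you correctly flag in passing: that $W^*(v)$ is maximal abelian in $R$ (by the same Fourier-coefficient argument the paper uses for $W^*(u)$), and that $m(E^c\cap\alpha E)=0$ together with $m(\alpha E)=m(E)$ gives $m(\alpha E\,\triangle\,E)=0$ before ergodicity is applied.
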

\begin{proof}
First we show that $\mathcal{H}_r(uf(v))$ is also an invariant subspace of $(uf(v))^*$. Suppose
\[\limsup_n\gamma_n\|(uf(v))^n\xi\|^{1/n}\leq r.\]
Let $f(v)=w|f|(v)$. Then $R=\{u,v\}''=\{uw,v\}''$ and $v(uw)=e^{2\pi i\theta}(uw)v$. Replacing $u$ by $uw$ and $f(v)$ by $|f|(v)$, we may assume that $f(v)$ is a positive operator. So $(uf(v))^*=f(v)u^*$. Let $\alpha=e^{2\pi i\theta}$. Then
\[
\|(uf(v))^n(uf(v))^*\xi\|=\|u^{n-1}f(\alpha^{n-2}v)\cdots f(v)f^2(\alpha^{-2}v)\xi\|=\|f(\alpha^{n-2}v)\cdots f(v)f^2(\alpha^{-2}v)\xi\|
\]
\[
\leq \|f^2(\alpha^{-2}v)\| \|f(\alpha^{n-2}v)\cdots f(v)\xi\|.
\]
So
\[
\|(uf(v))^n(uf(v))^*\xi\|^{1/n}\leq \|f^2(\alpha^{-2}v)\|^{1/n} \|f(\alpha^{n-2}v)\cdots f(v)\xi\|^{1/n}.
\]
Note that
\[
\|(uf(v))^n\xi\|=\|u^nf(\alpha^{n-1}v)\cdots f(v)\xi\|=\|f(\alpha^{n-1}v)\cdots f(v)\xi\|.
\]
Therefore,
\[
\limsup_n \gamma_n\|(uf(v))^n(uf(v))^*\xi\|^{1/n}\leq r.
\]
This proves that $\mathcal{H}_r(uf(v))$ is also an invariant subspace of $(uf(v))^*$. So the projection $P_{\mathcal{H}_r}$ is in the commutant algebra of $W^*(uf(v))$. By Corollary~\ref{C:Liu}, $\mathcal{H}_r(uf(v))$ is either $\mathcal{H}$ or $\{0\}$.
\end{proof}

In~\cite{Ga}, Gabriel introduced a class of quasinilpotent operators in the hyperfinite type ${\rm II}_1$ factor. He showed for such operators one can find a nilpotent operator $S$ in the commutant algebra of the quasinilpotent operator. Thus the range projection of $S$ is a nontrivial invariant subspace of such operator. The following result tells us this idea does not apply to our case.

\begin{Proposition}
Suppose $f(z)\in C(S^1)$ such that the zero set of $f(z)$ has Lebesgue measure zero and is nonempty. If $S$ is a nilpotent operator in the commutant algebra of $uf(v)$, then $S=0$.
\end{Proposition}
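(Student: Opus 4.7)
The plan is to combine the structure theorem for $\{x\}'\cap R$ (with $x=uf(v)$) already established in the proof of Theorem~\ref{T:strongly irreducible operators} with a formal power series argument. That argument showed every $T\in\{x\}'\cap R$ admits a unique $\|\cdot\|_2$-convergent expansion $T=\sum_{n\geq 0}\lambda_n x^n$, where $x^n=u^n g_n(v)$ and $g_n(v)=f(\alpha^{n-1}v)\cdots f(v)$. Writing $S=\sum_{n\geq 0}\lambda_n x^n$ and forming the formal series $P(z)=\sum_{n\geq 0}\lambda_n z^n\in\mathbb{C}[[z]]$, the strategy is to show that $S^m=0$ forces $P(z)^m=0$ in $\mathbb{C}[[z]]$; since $\mathbb{C}[[z]]$ is an integral domain, this yields $P=0$ and hence $S=0$.

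The crux is to identify the $u^N$-Fourier coefficient of the operator $S^m$ as $\mu_N g_N(v)$, where $\mu_N=\sum_{n_1+\cdots+n_m=N}\lambda_{n_1}\cdots\lambda_{n_m}$ is the $z^N$-coefficient of $P(z)^m$. To do so I would filter $R$ by $u$-Fourier degree: set $\mathcal{F}_K=\{T\in R : E_v(u^{-k}T)=0 \text{ for all } k<K\}$, where $E_v: R\to W^*(v)$ is the trace-preserving conditional expectation. The torus action $\rho_\zeta(u)=\zeta u,\ \rho_\zeta(v)=v$ is a group of $*$-automorphisms of $R$ and realizes $\mathcal{F}_K$ as its $\{k : k\geq K\}$-spectral subspace, whence the product rule $\mathcal{F}_K\cdot\mathcal{F}_L\subseteq\mathcal{F}_{K+L}$ holds. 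Now decompose $S=S_N+T_N$, with $S_N=\sum_{n=0}^N\lambda_n x^n$ a polynomial in $x$ (lying in $\mathcal{F}_0$ with $u$-Fourier support in $[0,N]$) and $T_N=S-S_N\in\mathcal{F}_{N+1}$. Expanding $(S_N+T_N)^m$: the all-$S_N$ summand $S_N^m$ is a genuine polynomial in $x$ whose $u^N$-Fourier coefficient equals $\mu_N g_N(v)$ by direct computation (the constraint $n_i\leq N$ is automatic when $\sum n_i\leq N$), while every mixed summand contains at least one $T_N$-factor, hence itself belongs to $\mathcal{F}_{N+1}$ and contributes zero to the $u^N$-coefficient.

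From $S^m=0$ we then obtain $\mu_N g_N(v)=0$ in $R$ for every $N\geq 0$. Because the zero set of $f$ has Lebesgue measure zero, the function $g_N(z)=\prod_{j=0}^{N-1}f(\alpha^j z)$ is nonzero almost everywhere on $S^1$, so $g_N(v)\neq 0$ in $R$; therefore $\mu_N=0$ for all $N$, i.e., $P(z)^m=0$, giving $P(z)=0$ and finally $S=0$. The main obstacle will be the clean justification of the product rule $\mathcal{F}_K\cdot\mathcal{F}_L\subseteq\mathcal{F}_{K+L}$ in the $L^2$-series setting: a self-contained derivation proceeds from the identity $u^k E_v(u^{-k}T)=\int_0^{2\pi}e^{-ik\theta}\rho_{e^{i\theta}}(T)\,d\theta/(2\pi)$ applied to $T=T_1T_2$, expanding each $\rho_{e^{i\theta}}(T_i)$ as an $L^2$-convergent Fourier series and interchanging sum with integral, the latter step being controlled by the norm-boundedness of the operator-valued Fej\'er partial sums of $T_i$ together with a dominated-convergence argument for the Fourier-coefficient computation.
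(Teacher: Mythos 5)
Your proof is correct and takes essentially the same route as the paper: both rest on the structure result from the proof of Theorem~\ref{T:strongly irreducible operators}, namely that every element of $\{uf(v)\}'\cap R$ is an $L^2$-convergent series $\sum_{n\geq 0}\lambda_n (uf(v))^n$, and then deduce that nilpotency forces all coefficients to vanish. The paper asserts this last implication in one line without detail, whereas your gauge-action filtration argument combined with the integral-domain property of $\mathbb{C}[[z]]$ (using that $g_N(v)\neq 0$ since the zero set of $f$ has Lebesgue measure zero) is a correct and careful justification of precisely that step.
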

\begin{proof}
By the proof of Theorem~\ref{T:strongly irreducible operators}, $S=\sum_{n=0}^\infty \alpha_n (uf(v))^n$. So if $S$ is a nilpotent operator then $S=0$.
\end{proof}

\noindent {\bf Question:}\, Suppose $\Delta(f(v))=0$, the zero points of $f(z)$ has Lebesgue measure zero and $|f|(z)$ is not a scalar operator. Does $uf(v)$ have a nontrivial invariant subspace affiliated with $R$?

\section{$C^*$-algebras generated by $uf(v)$ and $1$}
In this section, $f(z)\in C(S^1)$. Let  $C^*(uf(v),1)$ be the $C^*$-algebra generated by $uf(v)$ and $1$, and  let $A=C^*(uf(v),1)\cap C^*(v)$. Then $A$ is a unital $C^*$-subalgebra of $C^*(v)$ and  $C^*(uf(v),1)=C^*(uf(v),A)$. Recall that $\alpha=e^{2\pi i\theta}$. Note that $A$ is the $C^*$-algebra generated by $1$, $(uf(v))^*uf(v)$, $uf(v)(uf(v))^*$, $[(uf(v))^*]^2(uf(v))^2$, $(uf(v))^2[(uf(v))^*]^2$, $\cdots$. Simple calculations show that
\begin{equation}\label{E:1}
A=C^*(1, |f|(v), |f|(\alpha^{-1}v), |f|(v)|f|(\alpha v), |f|(\alpha^{-1}v)|f|(\alpha^{-2}v), |f|(v)|f|(\alpha v)|f|(\alpha^2 v), \cdots).
\end{equation}
 In the following we identify $C^*(v)$ with $C(S^1)$ by the Gelfand theorem and thus we view $A$ as a unital subalgebra of $C(S^1)$.

\begin{Theorem}
If $f(v)$ is an invertible operator, then $C^*(uf(v),1)\cong C^*(u,v^n)$ for some $n=0,1,2,\cdots$. Furthermore, if $|f|(v)$ is not a periodic function then $C^*(uf(v),1)=C^*(u,v)$.
\end{Theorem}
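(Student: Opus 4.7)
The plan is to follow the blueprint set up at the start of Section 6: analyze the diagonal subalgebra $A := C^*(uf(v),1)\cap C^*(v)$ via Gelfand duality, identify it as $C^*(v^n)$ for some integer $n\geq 0$, and then recover $C^*(uf(v),1)$ by adjoining $uf(v)$ to $A$ and renormalizing so as to produce unitary generators satisfying an irrational rotation relation.

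First, because $f(v)$ is invertible, both $|f|^2(v)=(uf(v))^*(uf(v))$ and $|f|^2(\alpha^{-1}v)=(uf(v))(uf(v))^*$ are invertible in $A$. Combining this with formula \eqref{E:1} and telescoping the length-$n$ products against the length-$(n-1)$ ones yields $|f|^2(\alpha^k v)\in A$ for every $k\in\mathbb{Z}$; taking positive square roots, $A$ is the unital $C^*$-subalgebra of $C(S^1)\cong C^*(v)$ generated by $\{|f|(\alpha^k z):k\in\mathbb{Z}\}$. Stone-Weierstrass then identifies $A$ with $C(S^1/\sim)$, where $z\sim w$ iff $|f|(\alpha^k z)=|f|(\alpha^k w)$ for every $k$. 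Setting $\beta=w/z$ and using the density of $\{\alpha^k z:k\in\mathbb{Z}\}$ in $S^1$, this condition reduces to $|f|(\zeta)=|f|(\beta\zeta)$ for all $\zeta\in S^1$, i.e.\ $\beta$ is a period of $|f|$. The set of periods of $|f|$ is a closed subgroup of $S^1$, hence either $\{1\}$, a cyclic group $\langle e^{2\pi i/n}\rangle$ with $n\geq 2$, or all of $S^1$; in the first two cases $A=C^*(v^n)$ (with $n=1$ or $n\geq 2$), while in the third $A=\mathbb{C}\cdot 1$, which we regard as $C^*(v^0)$ corresponding to $n=0$.

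I next upgrade this to the full isomorphism. In the cyclic-period case ($n\geq 1$), set $x:=uf(v)$ and $w:=v^n$. The relation $vu=\alpha uv$ gives $xw=e^{-2\pi in\theta}wx$, and the $n$-periodicity of $|f|$ means there is a strictly positive $\gamma\in C(S^1)$ with $x^*x=\gamma(w)$. Put $x':=x\gamma(w)^{-1/2}$: then $x'$ is unitary, $x'w=e^{-2\pi in\theta}wx'$, and $C^*(x',w)=C^*(x,w)=C^*(uf(v),1)$. Because $n\theta$ is irrational, the universal irrational rotation algebra $A_{n\theta}$ is simple, so the canonical surjection $A_{n\theta}\twoheadrightarrow C^*(x',w)$ sending its generators to $x',w$ is an isomorphism, giving $C^*(uf(v),1)\cong A_{n\theta}\cong C^*(u,v^n)$. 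In the remaining case $|f|\equiv c$ is constant, write $f=cg$ with $|g|\equiv 1$: then $ug(v)$ is a unitary whose nonzero powers have vanishing trace, hence a Haar unitary, so $C^*(uf(v),1)=C^*(ug(v),1)\cong C(S^1)\cong C^*(u,v^0)$.

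The main technical obstacle is the initial extraction step: formula \eqref{E:1} only produces increasingly long products of shifted factors, and it is precisely the invertibility of $f(v)$ that allows one to divide them out and recover the individual generators $|f|(\alpha^k v)$ inside $A$. Once this is in hand, the density of the irrational orbit converts the pointwise coincidence condition into a clean global periodicity statement for $|f|$, and the final unitarization through $\gamma(w)^{-1/2}$ is routine. The ``furthermore'' clause is then immediate: if $|f|$ is not a periodic function, the period group is trivial, so $n=1$, $v\in A\subseteq C^*(uf(v),1)$, and $u=uf(v)f(v)^{-1}\in C^*(uf(v),1)$, yielding $C^*(uf(v),1)=C^*(u,v)$ on the nose.
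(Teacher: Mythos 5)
Your proposal is correct, and its overall skeleton --- reduce to the diagonal $A=C^*(uf(v),1)\cap C^*(v)$, show $A=C^*(v^n)$, then unitarize $uf(v)$ by dividing out $|f|(v)$ --- is the same as the paper's; but the way you identify $A$ is a genuinely different argument. The paper expands $|f|$ in a Fourier series $\sum_k\alpha_k z^k$, shows any hidden period $z_0$ must satisfy $z_0^k=1$ whenever $\alpha_k\neq 0$, takes $n=\gcd\{k:\alpha_k\neq 0\}$ via the proof of Theorem~\ref{T:Liu}, and then still has to manufacture $v^n\in A$ by the averaging device $g_k(z)=\int_0^1 e^{2\pi ikt}|f|(e^{-2\pi it}z)\,dt$, which rests on the density of $\{\alpha^k\}$ and norm-continuity of $t\mapsto |f|(e^{2\pi it}v)$. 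You instead observe that the coincidence relation induced by the generators $|f|(\alpha^k z)$ is exactly translation by the period group $P$ of $|f|$ (using density of the irrational orbit), classify $P$ as a closed subgroup of $S^1$ (trivial, cyclic of order $n$, or all of $S^1$), and invoke the quotient form of Stone--Weierstrass to get $A=C(S^1/P)=C^*(v^n)$ outright --- which hands you $v^n\in A$ for free, bypasses the Fourier and averaging computations, and absorbs the scalar case ($P=S^1$, $n=0$) into the same framework. At the last step you are also more explicit than the paper: where the paper simply asserts $C^*(uu_1,v^n)\cong C^*(u,v^n)$, you note that both generating pairs satisfy the rotation relations with angle $n\theta$ irrational, so simplicity of the universal algebra $A_{n\theta}$ forces the canonical surjections to be isomorphisms; your unitary $x'=uf(v)\gamma(v^n)^{-1/2}$ is the paper's $uu_1$ in disguise, since $\gamma(v^n)^{1/2}=|f|(v)$. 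The Haar-unitary treatment of the constant case and the derivation of the ``furthermore'' clause ($v\in A$ and $u=uf(v)f(v)^{-1}$, giving equality on the nose) agree with the paper's.
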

\begin{proof}
If $|f|(v)$ is a scalar operator, then $uf(v)$ is a Haar unitary operator. Therefore, $C^*(uf(v),1)\cong C^*(u)$. Assume that $|f|(v)$ is a non-scalar invertible operator. Then $f(v)=u_1|f|(v)$ for some unitary operator $u_1\in C^*(v)$. So $uu_1=uf(v)|f|(v)^{-1}\in C^*(uf(v),1)$. By~(\ref{E:1}),
\[A=C^*(uf(v),1)\cap C^*(v)=C^*\{|f|(\alpha^kv):\, k\in \mathbb{Z}\}.\]
 If $A$ separates points of $S^1$, then the Stone-Weierstrass theorem implies that $A=C^*(v)$. Thus $C^*(uf(v),1)=C^*(uf(v), v)=C^*(u,v)$.  Now suppose $A$ does not separate points of $S^1$. Then there exists $z_1\neq z_2$, $z_1,z_2\in S^1$, such that $|f|(\alpha^kz_1)=|f|(\alpha^kz_2)$ for all $k\in \mathbb{Z}$. Since $\{\alpha^k:\,k\in \mathbb{Z}\}$ is dense in $S^1$, we have $f(zz_1)=f(zz_2)$. Let $z_0=z_2z_1^{-1}$ and replace $z$ by $zz_1^{-1}$. Then $|f|(z)=|f|(z_0z)$ for all $z\in S^1$. Suppose $|f|(z)=\sum_{k\in\mathbb{Z}}\alpha_kz^k$ is the Fourier series of $|f|(z)$. Then $|f|(z_0z)=\sum_{k\in\mathbb{Z}}\alpha_kz_0^kz^k=\sum_{k\in\mathbb{Z}}\alpha_kz^k$. If $\alpha_n\neq 0$, then $z_0^n=1$. Let $n={\rm gcd}\{k:\,\alpha_k\neq 0\}$. Then by the proof of Theorem~\ref{T:Liu}, $|f|(z)$ is a periodic function with a minimal period $e^{2\pi i/n}$. Since $\{\alpha^k:\, k\in\mathbb{Z}\}$ is dense in the unit circle $S$, $|f|(e^{2\pi it}v)\in A$ for all $t\in [0,1]$. For $k\in \mathbb{Z}$ and $z\in S^1$,
\[
g_k(z)=\int_0^1 e^{2\pi ikt}|f|(e^{-2\pi it}z)dt=\alpha_kz^k.
\]
Thus if $\alpha_k\neq 0$, then $\alpha_k^{-1}g_k(v)=v^k\in A$. Since $n={\rm gcd}\{k:\,\alpha_k\neq 0\}$, $v^n\in A$.  Conversely, since
$|f|(\alpha^k z)$ has period $e^{2\pi i/n}$, $|f|(\alpha^k v)\in C^*(v^n)$. Thus $A=C^*(v^n)$. Therefore,
\[
C^*(uf(v),1)=C^*(uf(v),A)=C^*(uu_1|f|(v), v^n)=C^*(uu_1,v^n)\cong C^*(u, v^n).
\]
\end{proof}

Let $Y$ be the zero points of $f(z)$. Then $Y$ is also the zero points of $|f|(z)$. In the following we assume that $Y\neq \emptyset$. Define $\phi(z)=\alpha z=e^{2\pi i\theta}z$. For $\xi\in S^1$ denote by
$$
Orb(\xi)=\{\phi^n(\xi): n\in {\mathbb Z}\}
$$
the orbit of $\xi$ under the rotation $\phi.$
By Proposition 2.5 of~\cite{FJX}, the following conditions are equivalent:

\begin{enumerate}
\item[(1).] $\phi^n(Y)\cap Y=\emptyset$ for any integer $n\not=0;$

\item[(2).] For each $\xi\in S^1,$ $Orb(\xi)\cap Y$ contains at most one point;

\item[(3).] $Y_1\cap Y_2=\emptyset$, where $Y_1=\cup_{n\ge 0}\phi^n(Y)$ and
    $Y_2=\cup_{k\ge 1} \phi^{-k}(Y).$

\end{enumerate}
By the proof of Corollary 4.6 of~\cite{FJX}, if $F$ is a Lebesgue measurable subset of $S^1$  satisfying the above conditions, then $m(F)=0$. Recall that $A=C^*(uf(v),1)\cap C^*(v)$.

\begin{Lemma}\label{L: structrue of A}
Let $Y$ be the zero points of $f(z)$. If $Y$ satisfies one of the above conditions (1)-(3), then $A=C^*(v^n)$ for some natural number $n\geq 1$. Furthermore, $A=C^*(v)$ if and only if $|f|(z)$ is  not a periodic function.
\end{Lemma}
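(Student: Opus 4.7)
The plan is to analyze $A$ via Gelfand theory. Identifying $C^*(v)$ with $C(S^1)$, the unital $C^*$-subalgebra $A$ corresponds to a quotient $S^1/\!\sim$, where $z_1\sim z_2$ iff $a(z_1)=a(z_2)$ for every $a\in A$. By (\ref{E:1}), $A$ is generated as a unital $C^*$-algebra by the functions $g_n(z):=\prod_{k=0}^{n-1}|f|(\alpha^k z)$ and $h_n(z):=\prod_{k=1}^{n}|f|(\alpha^{-k}z)$ for $n\ge 1$, so $\sim$ is the relation $g_n(z_1)=g_n(z_2)$ and $h_n(z_1)=h_n(z_2)$ for all $n\ge 1$. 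The key claim I will establish is that $z_1\sim z_2$ if and only if $z_0:=z_2z_1^{-1}$ is a period of $|f|$.

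The ``if'' direction is immediate: if $|f|(z_0 w)=|f|(w)$ for all $w$, then $|f|(\alpha^k z_1)=|f|(\alpha^k z_2)$ for every $k\in\mathbb{Z}$, and hence all generators agree on $z_1,z_2$. For the ``only if'' direction, I use the crucial structural consequence of condition (1): for each $z\in S^1$, there is at most one integer $j$ with $\alpha^j z\in Y$. Assuming $z_1\sim z_2$, a short case analysis on the zero-patterns of $g_n,h_n$ shows that either both $z_i\notin\bigcup_{k\in\mathbb{Z}}\alpha^k Y$, or both lie in a common $\alpha^{-k_0}Y$ for a shared integer $k_0$; a mismatch would force some $g_n$ or $h_n$ to vanish on one $z_i$ but not the other. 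In either case I induct on $n$: dividing out common nonzero prefixes in $g_n(z_1)=g_n(z_2)$ and $h_n(z_1)=h_n(z_2)$ yields $|f|(\alpha^j z_1)=|f|(\alpha^j z_2)$ for all $j$ in an explicit set $J\subseteq\mathbb{Z}$ (namely $J=\mathbb{Z}$ in the first case, and $J=\{j\le k_0\}$ or $\{j\ge k_0\}$ in the second, according to the sign of $k_0$). Since the irrational rotation is minimal, $\{\alpha^j:j\in J\}$ is dense in $S^1$, and continuity of $|f|$ upgrades the equalities to $|f|(wz_1)=|f|(wz_2)$ for every $w\in S^1$, i.e., $z_0$ is a period of $|f|$.

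The set of periods of $|f|$ is a closed subgroup of $S^1$. Because $Y$ is nonempty yet a proper subset of $S^1$ (properness because $Y=S^1$ would contradict (1)), $|f|$ is non-constant, and so this subgroup is a proper closed subgroup, namely the finite cyclic group $\{e^{2\pi ij/n}:0\le j<n\}$ for some $n\ge 1$. Hence the equivalence classes of $\sim$ are precisely the $n$-th root-of-unity cosets, and $S^1/\!\sim\cong S^1$ via $z\mapsto z^n$; Stone-Weierstrass applied to the unital $*$-closed subalgebra $A\subseteq C(S^1/\!\sim)$ yields $A = C(S^1/\!\sim) = C^*(v^n)$. The ``furthermore'' statement follows at once: $A=C^*(v)$ corresponds to $n=1$, which holds precisely when the periodicity group of $|f|$ is trivial, i.e., when $|f|(z)$ is not a periodic function. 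The main obstacle is the case analysis in the second paragraph; condition (1) is indispensable there, since without it a single $z$ could have $|f|(\alpha^j z)=0$ for multiple $j$, making the zero-patterns of the generators too entangled to extract the needed pointwise equalities.
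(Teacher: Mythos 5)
Your proposal is correct and takes essentially the same route as the paper's proof: Stone--Weierstrass on the quotient circle, density of semi-orbits of the irrational rotation to upgrade orbit equalities to periodicity of $|f|$, and the at-most-one-zero-per-orbit consequence of conditions (1)--(3) to control the zero patterns. Your write-up is in fact somewhat more careful than the paper's, making explicit the prefix-division induction that extracts the pointwise equalities $|f|(\alpha^j z_1)=|f|(\alpha^j z_2)$ from the product generators and deriving $n$ from the fact that the period set is a proper closed subgroup of $S^1$, but the underlying argument is the same.
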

\begin{proof}
By the Stone-Weierstrass theorem, if $A$ separates points of $S^1$ then $A=C^*(v)$. Otherwise, there exists $z_1\neq z_2$, $z_1,z_2\in S^1$, such that $g(z_1)=g(z_2)$ for all $g\in A$.  Suppose $|f|(\alpha^k z_1)=|f|(\alpha^k z_2)\neq 0$ for all $k=0,1,2,\cdots$. Since $\{\alpha^k:\, k\in \mathbb{N}\}$ is dense in $S^1$, $|f|(zz_1)=|f|(zz_2)$ for all $z\in S^1$. Replacing $z$ by $zz_1^{-1}$, we have $|f|(z)=|f|(zz_0)$, where $z_0=z_2z_1^{-1}$. Thus $|f|(z)$ is a periodic function with period $z_0$. Suppose $|f|(\alpha^k z_1)=|f|(\alpha^k z_2)=0$ for some $k=0,1,2,\cdots$. Then claim $|f|(\alpha^{-k}z_1)=|f|(\alpha^{-k}z_2)\neq 0$ for all $k\in \mathbb{N}$. Otherwise $|f|(\alpha^{-k'}z_1)=|f|(\alpha^{-k'}z_2)=0$ for some $k'\in \mathbb{N}$. Now both $\alpha^k z_1$ and $\alpha^{-k'}z_1$ belong to $Y$. This contradicts to condition (2) above the lemma. Thus $|f|(\alpha^{-k}z_1)=|f|(\alpha^{-k}z_2)\neq 0$ for all $k\in \mathbb{N}$. A similar argument shows that $|f|(z)$ is a periodic function. Let $e^{\frac{2\pi i}{n}}$ be a minimal period of $f(z)$. Claim $A=C^*(v^n)$. Let $X$ be the quotient space $\{e^{2\pi it}: t\in[0,\frac{2\pi}{n}]\}/\{1, e^{\frac{2\pi i}{n}}\}$.  Then $A$ and $v^n$ can be viewed as continuous functions on $X$. Note that $v^n$ separates points of $X$. By the Stone-Weierstrass theorem, $A\subseteq C^*(v^n)$. Claim $A$ also separates points of $X$.  Otherwise, there exists $z_1\neq z_2$, $z_1,z_2\in X$, such that $g(z_1)=g(z_2)$ for all $g\in A$.
By a similar argument we have  $|f|(z)=|f|(zz_0)$, where $z_0=z_2z_1^{-1}$. So $e^{\frac{2\pi i}{n}}$ is not a minimal period of $f(z)$. This is a contradiction. Thus $C^*(v^n)\subseteq A$ and hence $A=C^*(v^n)$.
\end{proof}

\begin{Theorem}\label{T:structure of C*(uf(v))}
Let $Y$ be the zero points of $f(z)$. If $Y$ satisfies one of the conditions (1)-(3) above Lemma~\ref{L: structrue of A}, then $C^*(uf(v),1)$ is a generalized universal irrational rotation $C^*$-algebra. Furthermore, if $|f|(z)$ is not a periodic function, then $C^*(uf(v),1)\cong A_{\theta,|f|^2}$.
\end{Theorem}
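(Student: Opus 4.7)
The plan is to exhibit $C^*(uf(v),1)$ as the image of a canonical surjection out of a suitable $A_{\theta',\gamma}$, and then to upgrade that surjection to an isomorphism via a gauge-invariant uniqueness argument.

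First I would apply Lemma~\ref{L: structrue of A} to extract a positive integer $n$ with $A:=C^*(uf(v),1)\cap C^*(v)=C^*(v^n)$: when $|f|$ is non-periodic, $n=1$; otherwise $n$ is the index of the minimal period of $|f|$. In either case, $|f|^2$ factors through $z\mapsto z^n$, so there is a unique non-negative $\gamma\in C(S^1)$ with $|f|^2(z)=\gamma(z^n)$. Set $x:=uf(v)$, $w:=v^n$, and $\theta':=n\theta\pmod{1}$, which is irrational since $\theta$ is. A direct computation verifies the four defining relations of $A_{\theta',\gamma}$: $w$ is unitary, $x^*x=|f|^2(v)=\gamma(w)$, $xx^*=u|f|^2(v)u^*=|f|^2(e^{-2\pi i\theta}v)=\gamma(e^{-2\pi i\theta'}w)$, and using the iterated relation $uv^n=e^{-2\pi i n\theta}v^n u$,
\[
xw=uf(v)v^n=uv^n f(v)=e^{-2\pi i\theta'}v^n uf(v)=e^{-2\pi i\theta'}wx.
\]
The universal property of $A_{\theta',\gamma}$ then produces a surjective $^*$-homomorphism $\Phi\colon A_{\theta',\gamma}\twoheadrightarrow C^*(uf(v),1)$ sending the universal generators to $(x,w)$.

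The core step is to show that $\Phi$ is injective, which I would do by a gauge-invariance argument. Since the defining relations of $A_{\theta',\gamma}$ are homogeneous in $x$, the prescription $x\mapsto zx$, $w\mapsto w$ extends for each $z\in S^1$ to an automorphism $\sigma_z$ of $A_{\theta',\gamma}$, giving a point-norm continuous action of $S^1$. On the ambient $C^*(u,v)$ the analogous gauge action $\tilde\sigma_z$ is defined by $u\mapsto zu$, $v\mapsto v$; it preserves the relation $vu=e^{2\pi i\theta}uv$, sends $x=uf(v)$ to $zx$, hence restricts to $C^*(uf(v),1)$, and $\Phi$ intertwines $\sigma$ and $\tilde\sigma$. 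Averaging over $S^1$ yields conditional expectations $E_0$ on $A_{\theta',\gamma}$ and $E$ on $C^*(uf(v),1)$ onto their respective fixed subalgebras; $E_0$ is automatically faithful for a continuous compact group action, while $E$ is faithful as the restriction of the trace-preserving conditional expectation of $R$ onto $W^*(v)$. Any $\tilde\sigma$-fixed element of $C^*(uf(v),1)$ lies in $C^*(u,v)^{\tilde\sigma}=C^*(v)$, hence in $C^*(uf(v),1)\cap C^*(v)=A=C^*(v^n)=C^*(w)$ by Lemma~\ref{L: structrue of A}; on the source side the fixed subalgebra is $C^*(w)$, so the restriction of $\Phi$ is a surjection $C^*(w)\cong C(S^1)\twoheadrightarrow C^*(v^n)\cong C(S^1)$ sending $w$ to $v^n$, which is therefore an isomorphism. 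A standard diagram chase concludes: for $a\in\ker\Phi$, $\Phi(E_0(a^*a))=E(\Phi(a^*a))=0$, so $E_0(a^*a)=0$, whence $a=0$. The ``furthermore'' clause is the special case $n=1$, $w=v$, $\theta'=\theta$, $\gamma=|f|^2$, giving $C^*(uf(v),1)\cong A_{\theta,|f|^2}$.

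The main obstacle I anticipate is the identification of the $\sigma$-fixed subalgebra of $A_{\theta',\gamma}$ as exactly $C^*(w)$, which requires a Fourier-type decomposition of general elements along the $S^1$-action using only the four defining relations. This is where the hypothesis (1)--(3) on $Y$ enters, working through Lemma~\ref{L: structrue of A} to match up the fixed subalgebra on the $R$-side, so that the gauge-invariant uniqueness argument can actually conclude that $\Phi$ is injective.
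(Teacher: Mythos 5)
Your proposal is correct, but the crucial injectivity step is handled by a genuinely different mechanism from the paper's. The paper's own proof is a short reduction to \cite{FJX}: after invoking Lemma~\ref{L: structrue of A} exactly as you do to get $A=C^*(v^n)$, it takes the polar decomposition $f(v)=u_1|f|(v)$ --- using the hypothesis on $Y$ a second time, through the fact recorded above Lemma~\ref{L: structrue of A} that conditions (1)--(3) force $m(Y)=0$, so that $u_1$ is an honest unitary in $W^*(v)$ --- absorbs the phase into the Haar unitary, writes $C^*(uf(v),1)=C^*(uu_1|f|(v),v^n)$, and identifies this concrete algebra with the universal $A_{\theta,|f|^2}$ (resp.\ $A_{n\theta,|g|^2}$, where $|f|(v)=g(v^n)$), an identification it treats as known from \cite{FJX}, where the orbit conditions give simplicity of $A_{\theta,\gamma}$ and hence injectivity of the canonical surjection. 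You avoid both ingredients: your $\Phi$ needs no polar decomposition at all, since the relations (\ref{U1})--(\ref{U4}) do not see the phase of $f$ and $x=uf(v)$ itself represents the universal generators; and injectivity comes from gauge-invariant uniqueness rather than from simplicity, so the hypothesis on $Y$ enters exactly once, through Lemma~\ref{L: structrue of A}. What the paper buys is brevity at the price of external dependence; what you buy is a self-contained argument that treats the periodic and non-periodic cases uniformly and silently disposes of a point the paper's route tacitly needs, namely that in the periodic case the zero set of $g$ again satisfies the orbit conditions for the rotation by $e^{2\pi in\theta}$. The one step you flag as the main obstacle, $A_{\theta',\gamma}^{\sigma}=C^*(w)$, is in fact routine: the relations let you push $w$ past $x$ and $x^*$ up to phases and convert $x^*x$ and $xx^*$ into functions of $w$, so the dense $*$-algebra is spanned by elements $h(w)x^k$ and $(x^*)^kh(w)$, on which $E_0$ acts as the degree-zero projection; and $C^*(w)\cong C(S^1)$ follows because it is a functional-calculus quotient of $C(S^1)$ that $\Phi$ maps onto $C^*(v^n)\cong C(S^1)$.
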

\begin{proof}
By Lemma~\ref{L: structrue of A}, if $|f|(z)$ is not a periodic function then $A=C^*(v)$ and if $|f|(z)$ is a periodic function, then $A=C^*(v^n)$ for some $n\geq 2$. In the first case, let $f(v)=u_1|f|(v)$ be the polar decomposition of $f(v)$. Since $Y$ satisfies one of the conditions above Lemma~\ref{L: structrue of A}, $m(Y)=0$. Thus $u_1$ is a unitary operator in the von Neumann algebra generated by $v$. So \[C^*(uf(v),1)=C^*(uf(v),A)=C^*(uu_1|f(v)|, v)\cong A_{\theta,|f|^2}.\] In the second case, $f(v)=u_1|f|(v)$ and $|f|\in C^*(v^n)$. So there exists a positive continuous function $g(z)$ on the unit circle such that $f(v)=g(v^n)$. Therefore, \[C^*(uf(v),1)=C^*(uf(v),A)=C^*(uu_1|f(v)|, v^n) =C^*(uu_1|g(v^n)|, v^n)
=C^*(uu_1|g(w)|, w)\cong A_{n\theta,|g|^2}.\]
\end{proof}

\begin{Proposition}
Suppose $|f|(z)$ is not a periodic function and $Y$ is the zero points of $f(z)$. Then the following conditions are equivalent:
\begin{enumerate}
\item $C^*(uf(v),1)$ is a simple algebra;
\item $\phi^n(Y)\cap Y=\emptyset$ for all integers $n\not=0;$
\item For each $\xi\in S^1,$ $Orb(\xi)\cap Y$ contains at most one point;
\item $Y_1\cap Y_2=\emptyset$, where $Y_1=\cup_{n\ge 0}\phi^n(Y)$ and
    $Y_2=\cup_{k\ge 1} \phi^{-k}(Y).$
\end{enumerate}
\end{Proposition}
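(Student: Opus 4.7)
The equivalences $(2) \Leftrightarrow (3) \Leftrightarrow (4)$ are Proposition 2.5 of~\cite{FJX}, which is recalled just above Lemma~\ref{L: structrue of A}, so no further work is needed for that portion. Only the equivalence $(1) \Leftrightarrow (2)$ requires genuinely new argument.

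For $(2) \Rightarrow (1)$, I would invoke Theorem~\ref{T:structure of C*(uf(v))} under the standing hypothesis that $|f|$ is non-periodic to obtain $C^*(uf(v),1) \cong A_{\theta, |f|^{2}}$, and then cite the simplicity criterion for generalized universal irrational rotation $C^*$-algebras proved in~\cite{FJX}: $A_{\theta, \gamma}$ is simple whenever the zero set of $\gamma$ satisfies $(2)$. Composing the two gives simplicity of $C^*(uf(v),1)$.

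For $(1) \Rightarrow (2)$ I would argue the contrapositive. Assume $(2)$ fails and pick $z_0 \in Y$, $n_0 \geq 1$ with $\alpha^{n_0} z_0 \in Y$. To exhibit a non-trivial closed two-sided ideal in $C^*(uf(v),1)$, construct a concrete representation $\pi : A_{\theta, |f|^{2}} \to B(\ell^2(\mathbb{Z}))$ by $\pi(w)e_k = \alpha^{k} z_0 \, e_k$ and $\pi(x)e_k = f(\alpha^{k} z_0)\, e_{k+1}$; a direct computation verifies the relations (U1)--(U4). Because $f(z_0) = f(\alpha^{n_0} z_0) = 0$, the weighted shift $\pi(x)$ carries at least two vanishing weights (in positions $0$ and $n_0$), so $\ell^2(\mathbb{Z})$ splits into $\pi(A_{\theta, |f|^{2}})$-invariant blocks and the image $\pi(A_{\theta, |f|^{2}})$ possesses proper closed ideals. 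Letting $\rho : A_{\theta, |f|^{2}} \twoheadrightarrow C^*(uf(v), v)$ denote the canonical universal surjection, provided one proves the inclusion $\ker(\rho) \subseteq \ker(\pi)$, the representation $\pi$ descends to a non-injective $*$-homomorphism on $C^*(uf(v), v)$, and its restriction to the subalgebra $C^*(uf(v),1)$ then has the desired proper non-zero kernel.

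The hard part is precisely the containment $\ker(\rho) \subseteq \ker(\pi)$, coupled with the slight discrepancy between $C^*(uf(v), v)$ and $C^*(uf(v),1)$ (which a priori may differ when $(2)$ fails because $v$ need not lie in $C^*(uf(v),1)$). The cleanest route is to upgrade Theorem~\ref{T:structure of C*(uf(v))} by removing the hypothesis $(2)$: show that for every non-periodic $|f|$, $v$ already lies in $C^*(uf(v),1)$ and $\rho$ is automatically an isomorphism $A_{\theta, |f|^{2}} \cong C^*(uf(v),1)$. The proof of Lemma~\ref{L: structrue of A} used $(2)$ only to exclude pairs of orbit points at which $|f|$ vanishes simultaneously; a refined Stone--Weierstrass analysis, exploiting the multiplicative closure of $A$, the density of $\{\alpha^{k}\}$ in $S^1$, and continuous functional calculus applied to $|f|^{2}(v)$, should still recover $A = C^*(v)$ in the absence of $(2)$. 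Granting this upgrade, $\rho$ is always an isomorphism, and the failure of simplicity of $A_{\theta, |f|^{2}}$ under $\neg(2)$ (from~\cite{FJX}) transfers directly to $C^*(uf(v),1)$, completing the contrapositive.
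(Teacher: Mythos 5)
Your handling of $(2)\Leftrightarrow(3)\Leftrightarrow(4)$ and of $(2)\Rightarrow(1)$ matches the paper exactly: the equivalences are quoted from Proposition 2.5 of~\cite{FJX}, and simplicity follows from Theorem~\ref{T:structure of C*(uf(v))} together with the simplicity criterion of~\cite{FJX}. The problem is the converse direction, where your argument has a genuine gap that you yourself flag but do not close. Everything is made to rest on the proposed ``upgrade'' of Lemma~\ref{L: structrue of A} and Theorem~\ref{T:structure of C*(uf(v))} --- that $A=C^*(v)$ and $C^*(uf(v),1)\cong A_{\theta,|f|^2}$ hold for every non-periodic $|f|$ even when condition (2) fails --- and this is not a removable technicality. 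The separation argument in Lemma~\ref{L: structrue of A} uses condition (2) at precisely the point at issue: once some $|f|(\alpha^k z_1)=0$ with $k\geq 0$ \emph{and} some $|f|(\alpha^{-k'}z_1)=0$ with $k'\geq 1$ (which is exactly what $\neg(2)$ permits), all but finitely many of the generating products in~(\ref{E:1}) vanish at $z_1$, and the Stone--Weierstrass separation claim is no longer available; no ``refined'' version is supplied. Moreover, even granting $A=C^*(v)$, the statement that $\rho:A_{\theta,|f|^2}\to C^*(uf(v),v)$ is an isomorphism is precisely the faithfulness assertion $\ker\rho=0$, which is the hard part you deferred; and without $\ker\rho\subseteq\ker\pi$ nothing descends, since non-simplicity does not pass to quotients. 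Finally, even a proper ideal of $C^*(uf(v),v)$ need not meet the subalgebra $C^*(uf(v),1)$ non-trivially, so the last restriction step is also unjustified. In short: $(1)\Rightarrow(2)$ is reduced to several statements at least as hard as the proposition, one of which (the upgrade) is doubtful as stated.

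The paper avoids all three difficulties by never invoking the universal algebra in the degenerate case and instead building a proper ideal directly inside $C^*(x,1)$, $x=uf(v)$. Assuming $\neg(4)$, choose $\lambda\in S^1$, $m\geq 0$, $n\geq 1$ with $\gamma(e^{2\pi in\theta}\lambda)=\gamma(e^{-2\pi im\theta}\lambda)=0$, $\gamma=|f|^2$, and let $J\subseteq A$ be the functions vanishing at $e^{2\pi in\theta}\lambda,\dots,\lambda,\dots,e^{-2\pi im\theta}\lambda$; this is nonzero because the product $|f|(\alpha^{-m}v)\cdots|f|(v)\cdots|f|(\alpha^n v)$ lies in $J$ by~(\ref{E:1}). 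The ideal $I=C^*(x,1)\,J\,C^*(x,1)$ is then shown to be proper by a Fourier-coefficient estimate: writing $a_i,b_i$ as polynomials in $x,x^*$ with $C(\mathbb{T})$ coefficients and collecting the $u^0$-component of $\sum_i a_i\varphi_i(v)b_i$ produces a function $\psi$ each of whose terms contains either some $\varphi_i$ or a product of translates of $\gamma$ vanishing at $\lambda$, so $\psi(\lambda)=0$; Lemma~\ref{L:less than 1} (the conditional-expectation bound $\|f_0(v)\|\leq\|T\|$) then yields $\bigl\|\sum_i a_i\varphi_i(v)b_i-1\bigr\|\geq\|\psi(v)-1\|\geq 1$, so $1\notin\overline{I}$ and $C^*(x,1)$ is not simple. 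This is an entirely internal, quantitative argument, and it is the ingredient your proposal is missing; if you want to salvage your representation-theoretic route, you would first have to prove your ``upgrade,'' which I expect to be at least as hard as the paper's direct construction and possibly false at the level of $A=C^*(v)$.
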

\begin{proof}
$2\Leftrightarrow 3\Leftrightarrow 4$ follows from Proposition 2.5 of~\cite{FJX}. $4\Rightarrow 1$ follows from Theorem~\ref{T:structure of C*(uf(v))}. We need to prove $1\Rightarrow 4$. Suppose $Y_1\cap Y_2\neq \emptyset$. Let $x=uf(v)$ and $\gamma(z)=|f|^2(z)$. Then there exists $\lambda\in S^1$, $m\geq 0$, $n\geq 1$ such that $\lambda$ is a zero point of $\gamma(e^{2\pi in\theta}z)$ and $\gamma(e^{-2\pi im\theta}z)$.
Consider the subset
\[
J=\{\varphi(v)|\varphi(v)\in A\,\text{and}\, \varphi(e^{2\pi i
n\theta}\lambda)=\cdots=\varphi(\lambda)=\cdots=\varphi(e^{-2\pi i
m\theta}\lambda)=0\}
\]
of $C^*(v)$. By the definition of $A$ (see (6.1)), $|f|(\alpha^{-m}v)\cdots |f|(\alpha^{-1}v)|f|(v)|f|(\alpha v)\cdots |f|(\alpha^n v)\in J$. So $J$ is a nonempty ideal of $A$. Claim that
$I=C^*(x,1)JC^*(x,1)$ is a
two-sided ideal of $C^*(x,1)$. Otherwise, there
exists $\varphi_i(v)\in J$,
\[
a_i=\sum_{n=1}^K (x^*)^ng^i_{-n}(v)+g^i(v)+\sum_{n=1}^Kg^i_{n}(v)x^n,
\]
and
\[
b_i=\sum_{n=1}^K (x^*)^nh^i_{-n}(v)+h^i(v)+\sum_{n=1}^Kh^i_{n}(v)x^n,
\]
for sufficiently large $K\in \mathbb{N}$ such that
\[
\left\|\sum_{i=1}^Na_i\varphi_i(v)b_i-1\right\|<1,
\]
where $g^i_n,g^i,h^i_n,h^i\in C(\mathbb{T})$. By Lemma~\ref{L:less than 1}
and simple computations, we have
\[
\|\sum_{i=1}^N g^i_{-K}(e^{2\pi i K\theta}v)\varphi_i(e^{2\pi i K\theta}v)h^i_{K}(e^{2\pi i
K\theta}v)\gamma(e^{2\pi i (K-1)\theta}v)\cdots
\gamma(v)+\]\[g^i_{-(K-1)}(e^{2\pi i (K-1)\theta}v)\varphi_i(e^{2\pi i
(K-1)\theta}v)h^i_{K-1}(e^{2\pi i (K-1)\theta}v)\gamma(e^{2\pi i
(K-2)\theta}v)\cdots \gamma(v) +\cdots+
\]
\[
g^i_{-1}(e^{2\pi i\theta}v)\varphi_i(e^{2\pi i\theta}v)h^i_{1}(e^{2\pi
i\theta}v)\gamma(v)+g^i(v)\varphi_i(v)h^i(v)+g^i_1(v)\varphi_i(e^{-2\pi
i\theta}v)h^i_{-1}(v)\gamma(e^{-2\pi i\theta}v)+\cdots+
\]
\[
g^i_{K-1}(v)\varphi_i(e^{-2\pi i(K-1)\theta}v)h^i_{-(K-1)}(v)\gamma(e^{-2\pi
(K-1)i\theta}v)\cdots \gamma(e^{-2\pi i\theta}v)+
\]
\[
g^i_{K}(v)\varphi_i(e^{-2\pi iK\theta}v)h^i_{-K}(v)\gamma(e^{-2\pi
Ki\theta}v)\cdots \gamma(e^{-2\pi i\theta}v)-1\|<1.
\]
Let
\[
\psi(z)=\sum_{i=1}^Ng^i_{-K}(e^{2\pi i K\theta}z)\varphi_i(e^{2\pi i
K\theta}z)h^i_{K}(e^{2\pi i K\theta}z)\gamma(e^{2\pi i
(K-1)\theta}z)\cdots \gamma(z)+\cdots+
\]
\[
g^i_{-1}(e^{2\pi i\theta}z)\varphi_i(e^{2\pi i\theta}z)h^i_{1}(e^{2\pi
i\theta}z)\gamma(z)+g^i(z)\varphi_i(z)h^i(z)+g^i_1(z)\varphi_i(e^{-2\pi
i\theta}z)h^i_{-1}(z)\gamma(e^{-2\pi i\theta}z)+\cdots+
\]
\[
g^i_{K}(z)\varphi_i(e^{-2\pi iK\theta}z)h^i_{-K}(z)\gamma(e^{-2\pi
Ki\theta}z)\cdots \gamma(e^{-2\pi i\theta}z).
\]

Since $\varphi_i(z)\in J$, $\varphi_i(e^{2\pi i
n\theta}\lambda)=\cdots=\varphi_i(\lambda)=\cdots=\varphi_i(e^{-2\pi i
m\theta}\lambda)=0$. Note that $\gamma(e^{2\pi
in\theta}\lambda)=\gamma(e^{-2\pi im\theta}\lambda)=0$. So
$\psi(\lambda)=0$. Hence $\|\psi(z)-1\|\geq 1$ and
$\|\psi(v)-1\|\geq 1$. By Lemma~\ref{L:less than 1},
\[
\left\|\sum_{i=1}^Na_i\varphi_i(v)b_i-1\right\|\geq \|\psi(v)-1\|\geq 1.
\]
 This is a contradiction.
\end{proof}

\begin{Corollary}
If $A_{\theta,\gamma}=C^*(u\gamma^{1/2}(v),v)$ is a simple generalized universal irrational rotation $C^*$-algebra, then $A_{\theta,\gamma}$ is
 generated by an element $uf(v)$ and the identity operator for some $f(z)\in C(S^1)$.
\end{Corollary}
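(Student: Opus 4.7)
Set $Y:=\{z\in S^1:\gamma(z)=0\}$. By the simplicity characterization of generalized universal irrational rotation $C^*$-algebras in~\cite{FJX}, simplicity of $A_{\theta,\gamma}$ forces $Y$ to satisfy the orbit condition $\phi^n(Y)\cap Y=\emptyset$ for every $n\neq 0$, and in particular $m(Y)=0$. The plan is to produce $f\in C(S^1)$ whose modulus $|f|$ is non-periodic and whose zero set is $Y$; then Theorem~\ref{T:structure of C*(uf(v))} will give $C^*(uf(v),1)\cong A_{\theta,|f|^2}$, and since the common zero set $Y$ satisfies the orbit condition both algebras will be simple, so the classification Theorem~\ref{T1c1} (same $\theta$, same zero set) will yield $A_{\theta,|f|^2}\cong A_{\theta,\gamma}$.

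Two easy sub-cases can be dispatched first. If $Y=\emptyset$, then $\gamma^{1/2}(v)$ is invertible, $A_{\theta,\gamma}=C^*(u\gamma^{1/2}(v),v)=C^*(u,v)$, and taking $f(z)=2+z$---which is non-vanishing and whose modulus is non-periodic---the first theorem of Section~6 gives $C^*(uf(v),1)=C^*(u,v)=A_{\theta,\gamma}$. If $Y\neq\emptyset$ and $\gamma$ is itself non-periodic, I take $f:=\sqrt{\gamma}\in C(S^1)$: its modulus $\sqrt{\gamma}$ is non-periodic with zero set $Y$, so Theorem~\ref{T:structure of C*(uf(v))} immediately gives $C^*(uf(v),1)\cong A_{\theta,\gamma}$.

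The substantive case is $Y\neq\emptyset$ with $\gamma$ periodic; here I build a non-periodic replacement $h\in C(S^1)$ with $h\geq 0$ and $\{h=0\}=Y$, and take $f:=\sqrt{h}$. Let $e^{2\pi i/n_0}$ be the minimal period of $\gamma$, and let $G=\langle e^{2\pi i/N}\rangle$ be the (finite cyclic, since $Y$ is a proper non-empty closed subset of $S^1$) rotational stabilizer of $Y$; necessarily $n_0\mid N$ and $N\geq 2$. Fix $z_0\in S^1\setminus Y$ and a short open arc $V\subseteq S^1\setminus Y$ about $z_0$, short enough that (i) $V\cap e^{2\pi ik/n}V=\emptyset$ for all $2\leq n\leq N$ and $1\leq k\leq n-1$, and (ii) for each $n\mid N$ with $n\nmid n_0$ the non-empty open set $U_n:=\{z:\gamma(z)\neq\gamma(e^{2\pi i/n}z)\}$ is not contained in $V\cup e^{-2\pi i/n}V\cup Y$; both conditions can be met simultaneously since only finitely many $n$ are involved and $m(Y)=0$. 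Pick $b\in C(S^1)$ non-negative with support in $V$ and $b(z_0)>0$, fix any $\epsilon>0$, and set $h:=\gamma\cdot(1+\epsilon b)$. Then $h\in C(S^1)$, $h\geq 0$, and $\{h=0\}=Y$.

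The main obstacle is verifying that $h$ is non-periodic. Any period $e^{2\pi i/n}$ of $h$ with $n\geq 2$ would force $Y=\{h=0\}$ to be $e^{2\pi i/n}$-invariant, hence $n\mid N$. If additionally $n\mid n_0$, then $\gamma$ is $e^{2\pi i/n}$-periodic, and evaluating $h(z_0)=h(e^{2\pi i/n}z_0)$ gives $\gamma(z_0)(1+\epsilon b(z_0))=\gamma(z_0)$, forcing $b(z_0)=0$, contradicting the choice of $b$. Otherwise $n\nmid n_0$, and by (ii) one can choose $z^*\in U_n\setminus(V\cup e^{-2\pi i/n}V\cup Y)$; then $b(z^*)=b(e^{2\pi i/n}z^*)=0$ while $\gamma(z^*)\neq\gamma(e^{2\pi i/n}z^*)$, so $h(z^*)\neq h(e^{2\pi i/n}z^*)$, again contradicting periodicity. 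Hence $h$ is non-periodic and the plan concludes via Theorems~\ref{T:structure of C*(uf(v))} and~\ref{T1c1}.
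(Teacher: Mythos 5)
Your proof is correct, and in the one substantive case ($Y\neq\emptyset$ with $\gamma$ periodic) it takes a genuinely different route from the paper's. The paper replaces $\gamma$ by $\gamma(z)|2+z|$ or $\gamma(z)|3+z|$, arguing that at least one of these is non-periodic, and then exploits the key point that the extra factor is \emph{invertible} in $C(S^1)$: multiplying the generator $u\gamma^{1/2}(v)$ by the invertible element $|2+v|^{1/2}\in C^*(v)$ does not change $C^*(u\gamma^{1/2}(v),v)$, so Theorem~\ref{T:structure of C*(uf(v))} (whose proof shows $A=C^*(v)$, hence $C^*(uf(v),1)=C^*(uf(v),v)$, once $|f|$ is non-periodic) identifies $A_{\theta,\gamma}$ with $C^*(uf(v),1)$ directly, with no classification result needed. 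You instead perturb by a local bump, $h=\gamma(1+\epsilon b)$, prove non-periodicity of $h$ by a careful case analysis on the finite rotational stabilizer of $Y$, and then transport back from $A_{\theta,h}$ to $A_{\theta,\gamma}$ via the classification Theorem~\ref{T1c1}; that step is valid (same $\theta$, same zero set $Y$, and both algebras simple by the simplicity criterion of~\cite{FJX}), but it is an unnecessarily heavy detour. Indeed your own multiplier $1+\epsilon b$ is also invertible in $C(S^1)$, so exactly as in the paper $C^*(uh^{1/2}(v),v)=C^*(u\gamma^{1/2}(v),v)=A_{\theta,\gamma}$, and Theorem~\ref{T:structure of C*(uf(v))} would have finished the argument without any appeal to the $K$-theoretic classification. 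What your route buys is rigor at the non-periodicity step: your bump construction is verified in full detail, whereas the paper's one-line claim that periodicity of both $\gamma(z)|2+z|$ and $\gamma(z)|3+z|$ would force $\frac{|3+z|}{|2+z|}$ to be periodic implicitly requires the two period groups to share a nontrivial common element, a point the paper does not address. Your explicit treatment of the case $Y=\emptyset$ is also a small improvement, since Theorem~\ref{T:structure of C*(uf(v))} is stated under the standing assumption $Y\neq\emptyset$ and the paper's proof leaves that case implicit.
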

\begin{proof}
Since $A_{\theta,\gamma}=C^*(u\gamma(v)^{1/2},v)$ is simple, by Corollary 6.5 of~\cite{FJX} the zero points of $\gamma(z)^{1/2}$ satisfies conditions (1)-(3) above Lemma~\ref{L: structrue of A}. If $\gamma(z)$ is not a periodic function, then $\gamma(z)^{1/2}$ is not a periodic function. By Theorem~\ref{T:structure of C*(uf(v))}, \[A_{\theta,\gamma}=C^*(u\gamma(v)^{1/2},v)\cong C^*(u\gamma(v)^{1/2}, 1).\] If $\gamma(z)$ is a periodic function, then either $\gamma(z)|2+z|$ or $\gamma(z)|3+z|$ is not a periodic function. Otherwise $\frac{|3+z|}{|2+z|}$ will be a periodic function. Assume that $\gamma(z)|2+z|$ is not a periodic function. Then
 \[A_{\theta,\gamma}=C^*(u\gamma(v)^{1/2},v)=C^*(u\gamma(v)^{1/2}|2+v|^{1/2},v)\cong C^*(u\gamma(v)^{1/2}|2+v|^{1/2}, 1).\]
\end{proof}

\begin{Corollary}
Suppose $f(z)$ has a single zero point. Then $A=C^*(v)$ and $C^*(uf(v),1)=C^*(uf(v),v)\cong A_{\theta,|f|^2}$ is a simple generalized universal irrational rotation $C^*$-algebra.
\end{Corollary}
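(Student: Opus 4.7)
The plan is to verify that the singleton $Y=\{\lambda\}$ satisfies condition (1) preceding Lemma~\ref{L: structrue of A}, and that $|f|(z)$ is not a periodic function; then the conclusion follows by chaining together Lemma~\ref{L: structrue of A}, Theorem~\ref{T:structure of C*(uf(v))}, and the Proposition characterizing simplicity.

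First I would check condition (1). Since $\theta$ is irrational, for any integer $n\neq 0$ we have $\phi^n(\lambda)=e^{2\pi in\theta}\lambda\neq \lambda$, so $\phi^n(Y)\cap Y=\emptyset$. Next I would argue that $|f|(z)$ cannot be periodic: if $|f|$ had a period $e^{2\pi i/n}$ with $n\geq 2$, then its zero set $Y$ would be invariant under multiplication by $e^{2\pi i/n}$, but $|Y|=1$ forces $e^{2\pi i/n}\lambda=\lambda$, hence $e^{2\pi i/n}=1$, a contradiction.

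With these two facts in hand, Lemma~\ref{L: structrue of A} immediately yields $A=C^*(v)$, so in particular $v\in C^*(uf(v),1)$, which gives the equality $C^*(uf(v),1)=C^*(uf(v),v)$. Since $|f|$ is not periodic and $Y$ satisfies condition~(1), Theorem~\ref{T:structure of C*(uf(v))} supplies the isomorphism $C^*(uf(v),1)\cong A_{\theta,|f|^2}$. Finally, the Proposition preceding this corollary, applied with $Y$ satisfying condition (2) (equivalent to condition (1)) and with $|f|$ non-periodic, gives simplicity of $C^*(uf(v),1)$.

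There is no genuine obstacle: the statement is a clean specialization of the preceding results, and the only point requiring a small argument is the exclusion of periodicity, which is forced by $|Y|=1$. Every other step is a direct invocation of a named result above.
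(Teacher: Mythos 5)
Your proposal is correct and follows exactly the route the paper intends: the corollary is stated without proof as an immediate specialization of Lemma~\ref{L: structrue of A}, Theorem~\ref{T:structure of C*(uf(v))}, and the simplicity Proposition, and you have supplied precisely the two small verifications left implicit (that a singleton $Y$ satisfies $\phi^n(Y)\cap Y=\emptyset$ for $n\neq 0$ because $\theta$ is irrational, and that a nontrivial period $z_0$ of $|f|$ would force $z_0\lambda=\lambda$, contradicting $|Y|=1$). No gaps; the argument is complete as written.
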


\begin{Lemma}\label{L:two zero points}
Suppose $f(z)$ has two zero points. Then $A=C^*(v)$ or $A=C^*(v^2)$. Furthermore, $A=C^*(v)$ if and only if $|f|(z)$ is not a periodic function.
\end{Lemma}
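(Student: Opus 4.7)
My plan is to split the proof according to whether the two zeros $\xi_1,\xi_2$ of $f$ lie in a single orbit of the irrational rotation $\phi(z)=\alpha z$; in one case I would apply Lemma~\ref{L: structrue of A} directly, and in the other argue by Stone--Weierstrass.

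\emph{Case I: $\xi_1,\xi_2$ lie in distinct $\phi$-orbits.} Then $\phi^n(Y)\cap Y=\emptyset$ for every $n\neq 0$, so Lemma~\ref{L: structrue of A} gives $A=C^*(v^n)$ for some $n\ge 1$, with $A=C^*(v)$ iff $|f|$ is non-periodic. To see $n\in\{1,2\}$, note that if $|f|$ has minimal period $e^{2\pi i/n}$ with $n\ge 2$, then $Y$ is invariant under the free $\mathbb{Z}/n$-action $z\mapsto e^{2\pi i/n}z$ on $S^1$, so $n$ divides $|Y|=2$ and hence $n=2$.

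\emph{Case II: $\xi_2=\alpha^{k_0}\xi_1$ for some $k_0\neq 0$.} I would first show $|f|$ is non-periodic here: otherwise the Case~I counting forces $\xi_2=-\xi_1$, so $\alpha^{k_0}=-1$, i.e.\ $k_0\theta\in\tfrac12+\mathbb{Z}$, which is impossible as $\theta$ is irrational. Given non-periodicity, it remains to show $A=C^*(v)=C(S^1)$; by Stone--Weierstrass this reduces to $A$ separating points. So assume $g(z_1)=g(z_2)$ for every $g\in A$ with $z_1\neq z_2$ and seek a contradiction. From~(\ref{E:1}), the products $g_n(z):=\prod_{k=0}^{n-1}|f|(\alpha^k z)$ and $h_n(z):=\prod_{k=1}^{n}|f|(\alpha^{-k}z)$ belong to $A$ (via square roots). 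Comparing $g_n(z_1)=g_n(z_2)$ inductively shows that either $|f|(\alpha^k z_1)=|f|(\alpha^k z_2)$ for every $k\ge 0$, or there is a smallest $N\ge 0$ with $\alpha^N z_1\in Y$; in the latter case the same $N$ is forced for $z_2$, equality holds for $0\le k<N$, and $\{\alpha^N z_1,\alpha^N z_2\}=\{\xi_1,\xi_2\}$. Analogously via $h_n$ one obtains a smallest $M\ge 1$ on the negative side (or none). If neither $N$ nor $M$ exists, density of $\{\alpha^k z_1:k\in\mathbb Z\}$ in $S^1$ and continuity of $|f|$ force $|f|$ to be $(z_2z_1^{-1})$-periodic, contradiction. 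If only $N$ is finite (the $M$-only case is symmetric), set $\zeta=\xi_2/\xi_1$ and WLOG $z_2=\zeta z_1$; then $|f|(w)=|f|(\zeta w)$ holds on the dense set $\{\alpha^k z_1:k\le N-1\}$, so everywhere by continuity, again a contradiction. If both $N,M$ are finite, use that $\zeta\neq -1$ (since $\alpha^{k_0}=-1$ would again contradict irrationality) to rule out the cross configuration and pin down $\alpha^N z_1=\xi_1=\alpha^{-M}z_1$ (WLOG), whence $\alpha^{N+M}=1$ with $N+M\ge 1$, impossible. So $A$ separates points and $A=C^*(v)$.

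The main obstacle will be the direct Stone--Weierstrass argument in Case~II, particularly the careful sub-case analysis based on how the one-sided orbits of $z_1,z_2$ meet $Y$, and the use of $\zeta\neq -1$ to eliminate the ``both sides hit'' configuration. The ``furthermore'' assertion then follows immediately: in Case~I it is part of Lemma~\ref{L: structrue of A}, while in Case~II we have both $A=C^*(v)$ and $|f|$ non-periodic.
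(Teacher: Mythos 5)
Your proof is correct, and it is organized genuinely differently from the paper's. The paper never splits on the orbit configuration of $Y$: it runs a single Stone--Weierstrass dichotomy on a non-separated pair $z_1\neq z_2$, casing only on whether the forward orbit $\{\alpha^k z_1\}_{k\geq 0}$ meets $Y$. If it never does, density gives $|f|(z)=|f|(z_0z)$ with $z_0=z_2z_1^{-1}$, and the two-point zero set forces minimal period $e^{\pi i}$, hence $A=C^*(v^2)$; if it does, the paper shows the backward orbit cannot also meet $Y$ (the same irrationality obstruction $\alpha^{2(k+k')}=1$ that underlies your parallel/cross elimination) and then runs the density argument on the backward side, again landing at period $e^{\pi i}$ and $A=C^*(v^2)$. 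You instead front-load the orbit dichotomy: in the distinct-orbit case you correctly note that for a two-point $Y$ this is exactly condition (1), so Lemma~\ref{L: structrue of A} applies verbatim, and your free $\mathbb{Z}/n$-action counting pinning $n\in\{1,2\}$ is a clean step the paper leaves implicit; in the same-orbit case you establish non-periodicity first (via $\zeta=\alpha^{k_0}\neq -1$) and rerun the separation analysis with contradiction, rather than periodicity, as the endpoint. Your route makes the two-zero lemma essentially a corollary of the general structure lemma plus a short counting argument, and your bookkeeping (simultaneous first hitting time $N$ for both points, one-sided density in the ``only $N$'' and ``only $M$'' sub-cases) is more explicit than the paper's ``a similar argument shows''; the paper's route is self-contained and shorter, handling the periodic and non-periodic outcomes in one sweep. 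Two cosmetic remarks: your Case I uses that the $n$ in $A=C^*(v^n)$ is the minimal-period index, which comes from the proof rather than the statement of Lemma~\ref{L: structrue of A}; and in the ``both $N,M$ finite'' sub-case the multiplication trick $\alpha^{2(N+M)}=1$ disposes of the cross configuration directly, so the appeal to $\zeta\neq -1$ there, while valid, is not needed.
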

\begin{proof}
By the Stone-Weierstrass theorem, if $A$ separates points of $S^1$ then $A=C^*(v)$. Otherwise, there exists $z_1\neq z_2$, $z_1,z_2\in S^1$, such that $g(z_1)=g(z_2)$ for all $g\in A$. Since $f(z)$ has two zero points, $|f|(z)$ has two zero points. Suppose $|f|(\alpha^k z_1)=|f|(\alpha^k z_2)\neq 0$ for all $k=0,1,2,\cdots$. Since $\{\alpha^k:\, k\in \mathbb{N}\}$ is dense in $S^1$, $|f|(zz_1)=|f|(zz_2)$ for all $z\in S^1$. Replacing $z$ by $zz_1^{-1}$, we have $|f|(z)=|f|(zz_0)$, where $z_0=z_2z_1^{-1}$. Thus $|f|(z)$ is a periodic function with period $z_0$. Since $|f|(z)$ has exactly two zero points, $|f|(z)$ is periodic function with minimal period $e^{\pi i}$. Thus $A=C^*(v^2)$. Suppose $|f|(\alpha^k z_1)=|f|(\alpha^k z_2)=0$ for some $k=0,1,2,\cdots$. Then claim $|f|(\alpha^{-k}z_1)=|f|(\alpha^{-k}z_2)\neq 0$ for all $k\in \mathbb{N}$. Otherwise $|f|(\alpha^{-k'}z_1)=|f|(\alpha^{-k'}z_2)=0$ for some $k'\in \mathbb{N}$. Since $|f|(z)$ has exactly two zero points, $\{\alpha^k z_1,\alpha^k z_2\}=\{\alpha^{-k'}z_1,\alpha^{-k'}z_2\}$. Since $\alpha=e^{2\pi i\theta}$ and $\theta$ is irrational, this is impossible. Thus $|f|(\alpha^{-k}z_1)=|f|(\alpha^{-k}z_2)\neq 0$ for all $k\in \mathbb{N}$. A similar argument shows that $|f|(z)$ is periodic function with minimal period $e^{\pi i}$. Thus $A=C^*(v^2)$.
\end{proof}

\begin{Proposition}
Suppose $f(z)$ has two zero points. Then $C^*(uf(v),1)$ is a generalized universal irrational rotation $C^*$-algebra. Furthermore, if $|f|(z)$ is not a periodic function, then $C^*(uf(v),1)\cong A_{\theta,|f|^2}$.
\end{Proposition}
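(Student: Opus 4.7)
The plan is to mirror the proof of Theorem~\ref{T:structure of C*(uf(v))}, replacing Lemma~\ref{L: structrue of A} with Lemma~\ref{L:two zero points}. By Lemma~\ref{L:two zero points}, the intersection $A=C^*(uf(v),1)\cap C^*(v)$ is either $C^*(v)$ (when $|f|$ is not periodic) or $C^*(v^2)$ (when $|f|$ has minimal period $-1$). In either case $C^*(uf(v),1)=C^*(uf(v),A)$. Since $f$ has only two zero points, its zero set has Lebesgue measure zero, so the partial isometry $u_1$ in the polar decomposition $f(v)=u_1|f|(v)$ is a unitary in the von Neumann algebra generated by $v$; consequently $\tilde u:=uu_1$ is unitary and satisfies $v\tilde u=e^{2\pi i\theta}\tilde u v$ (using that $u_1$ commutes with $v$).

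In the first case ($|f|$ not periodic), the argument of Theorem~\ref{T:structure of C*(uf(v))} applies without change: $C^*(uf(v),1)=C^*(\tilde u|f|(v),v)$, and a short computation confirms that the pair $(x,w)=(\tilde u|f|(v),v)$ satisfies the defining relations (\ref{U1})--(\ref{U4}) of $A_{\theta,|f|^2}$, yielding $C^*(uf(v),1)\cong A_{\theta,|f|^2}$.

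In the second case ($|f|$ periodic), the identity $|f|(z)=|f|(-z)$ produces a non-negative $g\in C(S^1)$ with $|f|(v)=g(v^2)$ (whose unique zero point is $z_0^2$, where $\{z_0,-z_0\}$ are the zeros of $f$). Setting $w:=v^2$, the relation $v\tilde u=e^{2\pi i\theta}\tilde u v$ squares to $w\tilde u=e^{4\pi i\theta}\tilde u w$, so the pair $(\tilde u g(w),w)$ satisfies the defining relations of $A_{2\theta,|g|^2}$. As $uf(v)=\tilde u g(w)$, one obtains $C^*(uf(v),1)\cong A_{2\theta,|g|^2}$, a generalized universal irrational rotation $C^*$-algebra (since $2\theta$ is irrational). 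The only genuinely new ingredient compared with Theorem~\ref{T:structure of C*(uf(v))} is this step-up from angle $\theta$ to $2\theta$; the remaining verifications are routine commutator calculations using $u_1\in C^*(v)$ and the unitarity of $u_1$. I expect the main technical point, as in Theorem~\ref{T:structure of C*(uf(v))}, to be nothing more than bookkeeping in checking (\ref{U1})--(\ref{U4}) for the new pair $(\tilde u g(w),w)$.
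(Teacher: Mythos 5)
Your proposal is correct and takes essentially the same route as the paper's proof: both handle the two cases supplied by Lemma~\ref{L:two zero points} via the polar decomposition $f(v)=u_1|f|(v)$ (with $u_1$ unitary in $W^*(v)$ since the zero set is null), identifying $C^*(uf(v),1)$ with $C^*(uu_1|f|(v),v)\cong A_{\theta,|f|^2}$ in the non-periodic case and, in the periodic case, writing $|f|(v)=g(v^2)$ and passing to $w=v^2$ to obtain $A_{2\theta,|g|^2}$. Your explicit check of the relations (\ref{U1})--(\ref{U4}) and the remark that $2\theta$ is irrational only spell out steps the paper leaves implicit.
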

\begin{proof}
By Lemma~\ref{L:two zero points}, if $|f|(z)$ is not a periodic function then $A=C^*(v)$ and if $|f|(z)$ is a periodic function, then $A=C^*(v^2)$. In the first case, let $f(v)=u_1|f|(v)$ be the polar decomposition of $f(v)$.  Then $u_1$ is a unitary operator in the von Neumann algebra generated by $v$. So \[C^*(uf(v),1)=C^*(uf(v),A)=C^*(uu_1|f(v)|, v)\cong A_{\theta,|f|^2}.\] In the second case, $f(v)=u_1|f|(v)$ and $|f|\in C^*(v^2)$. So there exists a positive continuous function $g(z)$ on the unit circle such that $f(v)=g(v^2)$. Therefore, \[C^*(uf(v),1)=C^*(uf(v),A)=C^*(uu_1|f(v)|, v^2)= C^*(uu_1|g(v^2)|, v^2)
=C^*(uu_1|g(w)|, w)\cong A_{2\theta,|g|^2}.\]
\end{proof}

\noindent Junsheng Fang

\noindent School of Mathematical Sciences,
Dalian University of Technology,

\noindent Dalian, 116024, P.R China,

\noindent {\em E-mail address: } [Junsheng Fang]\,\,
junshengfang\@@gmail.com

\vspace{.2in}

\noindent Zhangsheng Zhu

\noindent School of Mathematical Sciences,
Dalian University of Technology,

\noindent Dalian, 116024, P.R China,

\noindent {\em E-mail address: } [Zhangsheng Zhu]\,\,
zhuzhangsheng2010\@@163.com

\vspace{.2in}

\noindent Rui Shi

\noindent School of Mathematical Sciences, Dalian University of
Technology,

\noindent Dalian, 116024, P.R China,

\noindent {\em E-mail address: } [Rui Shi]\,\,
littlestoneshg\@@gmail.com


\begin{thebibliography}{99}

 \bibitem{Bo} Boca, F., Rotation $C^*$-algebras and almost Mathieu operators. Theta Series in Advanced Mathematics, 1. The Theta Foundation, Bucharest, 2001. xviii+172 pp.

 \bibitem{Br} Brown, L.G., Lidskii's theorem in the type ${\rm II}$ case, Geometric methods in operator algebras, H.Araki and E.Effros(Eds.) Pitman Res. Notes Math. Ser 123 (1986), 1-35.


 \bibitem{BL} Biane, P.; Lehner, F., Computation of some examples of Brown's spectral measure in free probability, \emph{Colloq. Math.}, 90 (2001), 181-211.

\bibitem{DH} Dykema, K,; Haagerup, U., ${\rm DT}$-operators and decomposability of Voiculescu's circular operators, \emph{Amer. J. Math.}, 126 (2004) 121-189.


\bibitem{DH2} Dykema, K,; Haagerup, U., Invariant subspaces of the quasinilpotent ${\rm DT}$-operator, \emph{J.Funct.Anal.}, 209 (2004) 332-366.




 \bibitem{FHM} Fang, J.; Hadwin, D.; Ma, X., On spectra and Brown's spectral measures of elements in free products of matrix algebras, \emph{Math. Scand.}, (103) 2008, 77-96.




 \bibitem{FJX} Fang, J.; Jiang, C.L.; Hua, X.L.; Xu, F., On the operator $u+\lambda v$ and $C^*$-subalgebras of the universal irrational rotation algebra, \emph{International Journal of Mathematics}, (24) 2013, no 2, 1350059.


 \bibitem{FK} Fuglede, B.; Kadison, R.,  Determinant theory in finite factors, \emph{Ann. of Math.},55 (3) 1952, 520-530.

 \bibitem{Ga} Gabriel, H.T., Some quasinilpotent generators of the hyperfinite ${\rm II}_1$ factor, \emph{J. Funct. Anal.} 254 (2008), 2969-2994.

 \bibitem{HL} Haagerup, U.; Larsen, F.; Brown's spectral distribution measure for $R$-diagonal elements in finite von Neumann algebras, \emph{J.Funct.Anal}. 176 (2000), 331-367.


 \bibitem{HS2} Haagerup, U.; Schultz, H.; Invariant subspaces for operators in a general ${\rm II}_1$ factor, \emph{Publ. Math. Inst. Hautes $\acute{\text{E}}$tudes Sci}. No. 109 (2009), 19-111.


\bibitem{La} Last, Y., Spectral theory of Sturm-Liouville operators on infinite intervals: a review of recent developments. Sturm-Liouville theory, 99-120, Birkh$\ddot{\text{a}}$user, Basel, 2005.

\bibitem{NS} Nica, A.; Speicher, R.,  R-diagonal pairs, a common approach to Haar unitaries and circular elements, Fields Institute Commun. 12 (1997) 149-188.

\bibitem{SS} Sniady, P.; Speicher, R., Continuous family of invariant subspaces for $R$-diagonal operators, \emph{Invent. Math.} 146 (2001), no. 2, 329-363.

\bibitem{Wa} Walters, P.; An introduction to Ergodic Theory, Graduate texts in mathematics, 79.


 \end{thebibliography}
\end{document}